\numberwithin{equation}{section}
\newtheorem{example}[equation]{Example}
\newtheorem{definition}[equation]{Definition}
\newtheorem{remark}[equation]{Remark}
\newtheorem{theorem}[equation]{Theorem}
\newtheorem{lemma}[equation]{Lemma}
\newtheorem{proposition}[equation]{Proposition}
\newtheorem{corollary}[equation]{Corollary}
\newcommand{\C}{\ensuremath{\mathbb{C}}\xspace}
\renewcommand{\S}{{\mathcal S}}
\renewcommand{\l}{\ensuremath{\lambda}}
\newcommand{\m}{\ensuremath{\mathfrak{m}}}
\newcommand{\p}{\ensuremath{\mathfrak{p}}}
\newcommand{\n}{\ensuremath{\mathfrak{n}}}
\newcommand{\Z}{\ensuremath{\mathbb{Z}}\xspace}
\renewcommand{\phi}{\varphi}
\def\D{\Delta}
\def\m{{\mathfrak m}} \def \Max {\mathfrak{max}} \def
\def\O{{\mathcal O}}
\def\p{{\mathfrak p}}
\newcommand{\g}{\ensuremath{\dot {\mathfrak{g}}}}
\newcommand{\h}{\ensuremath{\dot{\mathfrak{h}}}}
\newcommand{\G}{\ensuremath{\mathfrak{g}}}
\newcommand{\ZZ}{\ensuremath{\mathfrak{z}}}
\newcommand{\MM}{\ensuremath{\mathsf{M}}}
\renewcommand{\H}{\ensuremath{\mathfrak{h}}}
\newcommand{\UU}{\ensuremath{\mathsf{U}}}
\newcommand{\SF}{\ensuremath{\mathsf{S}}}
\newcommand{\SR}{\ensuremath{\mathsf{R}}}
\newcommand{\Bo}{\ensuremath{\mathfrak {b}}}
\newcommand{\VV}{\mathsf{V}}
\newcommand{\LL}{\mathsf{L}}
\newcommand{\QQ}{\mathsf{Q}}
\newcommand{\HH}{\mathsf{H}}
\newcommand{\WW}{\mathsf{W}}
\newcommand{\FF}{\mathsf{F}}
\newcommand{\AS}{\mathsf{A}}
\newcommand{\DD}{\mathsf{D}}
\newcommand{\II}{\tt {I}}
\newcommand{\JJ}{\tt {J}}
\newcommand{\KS}{\mathsf K}
\newcommand{\KK}{\tt {K}}
\newcommand{\tN}{\tt {J}}
\newcommand{\PP}{\mathsf{P}}
\newcommand{\NN}{\mathsf{N}}
\begin{document}
\title{New Irreducible Modules for \break
Heisenberg and Affine Lie Algebras}
\author{Viktor Bekkert} \address{\noindent
Departamento de Matem\'atica, ICEx, Universidade Federal de Minas
Gerais, Av.  Ant\^onio Carlos, 6627, CP 702, CEP 30123-970\\ Belo
Horizonte-MG, Brasil} \email{\,bekkert@mat.ufmg.br}
\author{Georgia Benkart}
\address{\noindent Department of Mathematics, University of
Wisconsin-Madison, Madison, WI 53706, USA} \email{\,benkart@math.wisc.edu}
\author{Vyacheslav Futorny}
\author{Iryna Kashuba}
\address{ Institute of Mathematics, University of
S\~ao Paulo, Caixa Postal 66281 CEP 05314-970, S\~ao Paulo,
Brasil}\email{futorny@ime.usp.br} \email{kashuba@ime.usp.br}

\date{}

\begin{abstract}
We study $\mathbb Z$-graded modules of nonzero level with
arbitrary weight multiplicities over Heisenberg Lie algebras and
the associated generalized loop modules over affine Kac-Moody
Lie algebras. We construct new families of such irreducible modules
over Heisenberg Lie algebras. Our main result establishes the
irreducibility of the corresponding generalized loop modules providing
an explicit construction of many new examples of irreducible modules
for affine Lie algebras. In particular,  to any function
$\phi:\mathbb N\rightarrow \{\pm\}$ we associate a $\phi$-highest
weight module over the Heisenberg Lie algebra and a $\phi$-imaginary
Verma module over the affine Lie algebra. We show that any
$\phi$-imaginary Verma module of nonzero level is irreducible.
\end{abstract}

\maketitle

\section{Introduction}\label{s1}
Affine Lie algebras are the most studied among the
infinite-dimensional Kac-Moody Lie algebras and have widespread
applications. Their representation theory  is far richer than
that of finite-dimensional simple Lie algebras.  In particular, affine
Lie algebras have irreducible modules containing both finite- and
infinite-dimensional weight spaces, something that cannot happen
in the finite-dimensional setting. These representations arise
from taking non-standard partitions of the root system;  that is, partitions
which are not equivalent under the Weyl group to the standard
partition into positive and negative roots (see \cite{DFG}).  For affine Lie algebras,
there are always only finitely many equivalence classes of such
nonstandard partitions (see \cite{F4}).  Corresponding to each partition
is a Borel subalgebra,  and one can form representations induced
from one-dimensional modules for these Borel subalgebras. These
modules, often referred to as \emph{Verma-type modules}, were first studied
by Jakobsen and Kac \cite{JK}, and by Futorny \cite{F3,
F4}. Results on the structure of Verma-type modules can also be found in (\cite{Co, F1,
FS}).

Let $\G$ be an affine Lie algebra, $\H$ its standard Cartan subalgebra,  and $\ZZ = \C c$ its center,
where $c$ is the canonical central element.
Let $\VV$ be a weight $\G$-module, that is,  $\VV=\bigoplus_{\mu\in \H^*}
\VV_{\mu}$, where $\VV_{\mu}=\{v\in \VV\mid hv=\mu(h)v \ \hbox{\rm for all} \  h\in \H\}$. If $\VV$
is irreducible, then $c$ acts as a scalar on $\VV$ called the {\em level}
of $\VV$.    The theory of Verma-type modules is  best developed in
the case when the level is nonzero
\cite{F4}.    For example, the  so-called imaginary Verma
modules  induced from  the natural Borel subalgebra
 are always irreducible when the level is nonzero (\cite{JK}, \cite{F2}).

The classification of irreducible modules is
known only for modules with finite-dimensional weight spaces
(see \cite{FT}) and for certain subcategories of induced modules with
some infinite-dimensional weight spaces (see for example,  \cite{F3}, \cite{FKM},
\cite{FK}).   Our main goal is to go beyond the modules with
finite-dimensional weight spaces and to construct new irreducible
modules of nonzero level with infinite-dimensional weight
spaces. Examples of such modules have been  constructed  previously
by Chari and Pressley in \cite{CP} as the
tensor product of highest and lowest weight modules.

Here we consider different Borel-type subalgebras that do not correspond to
partitions of the root system of $\G$.   Such a subalgebra
is determined by a function $\phi: \mathbb N\rightarrow
\{\pm\}$ on the set $\mathbb N$ of positive integers,  and so is denoted $\Bo_{\phi}$.
The subalgebra $\Bo_{\phi}$ gives rise to a class of $\G$-modules called   $\phi$-{\it imaginary Verma modules}.
These modules can be viewed as induced from $\phi$-highest weight
modules over the Heisenberg subalgebra of $\G$. This construction
is similar to the construction of imaginary Whittaker modules in
\cite{Ch},  but unlike the modules in \cite{Ch},  our modules over the Heisenberg subalgebra are
$\mathbb Z$-graded.  If $\phi(n)=+$ for all $n \in \mathbb N$, then
$\Bo_{\phi}$ is the natural Borel subalgebra of $\G$.

We establish a criterion for the irreducibility of $\phi$-imaginary
Verma modules. It comes as no surprise that any such module is
irreducible if and only if it has a nonzero level.

Next we consider the classification problem for irreducible
$\mathbb Z$-graded modules for the Heisenberg subalgebra of $\G$.
The ones of level zero were  determined  by Chari \cite{C}. Any
such module of  nonzero level with a $\Z$-grading has all its
graded components infinite-dimensional  by \cite{F1};  otherwise,
it is a highest weight module. We classify all {\it admissible
diagonal} $\mathbb Z$-graded irreducible modules of nonzero
level for an arbitrary infinite-dimensional Heisenberg Lie
algebra. Since the $\mathbb Z$-graded components of a module are not assumed to be
finite-dimensional the restriction on a module to be diagonal is natural.
 We show that these modules have a $\mathbb
Z^{\infty}$-gradation and can be obtained from weight modules over
an associated Weyl algebra as in \cite{BBF}  by compression of the
gradation.

The restriction on a module to be admissible
 leads to an equivalence between the category of
admissible diagonal $\mathbb Z$-graded modules for
Heisenberg Lie algebras and the  category of admissible weight
modules for the Weyl algebra $\mathsf{A}_{\infty}$ \cite{BBF}. Examples of
such modules were considered earlier by Casati \cite{Ca}, where
they were constructed by means of an action of differential
operators on a space of polynomials in infinitely many variables
(compare Theorem~\ref{thm-realiz-locally-finite} below). Examples
of non-admissible diagonal $\mathbb Z$-graded irreducible
modules were constructed in \cite{MZ}.

We use parabolic induction to construct
generalized loop modules  for the affine Lie algebra $\G$. The modules are
induced from an arbitrary irreducible $\mathbb Z$-graded  module
of nonzero level for the Heisenberg subalgebra of $\G$. This construction
extends Chari's construction to the nonzero level case. Our
main result establishes the irreducibility of any generalized loop
module  induced from a diagonal irreducible
module of nonzero level for the Heisenberg subalgebra. By this
process,  we obtain
new families of irreducible modules of nonzero level for any affine
Lie algebra.  The irreducible modules constructed in
\cite{Ca} are ``dense''  in the sense that they have the maximal possible set of
weights and hence are different from the ones studied here.

{\em It should be noted that all results in
our paper hold for both the  untwisted and twisted affine
Lie algebras.}

 The structure of the paper is as follows.  In Section~3,  we
 construct $\phi$-imaginary Verma modules for affine Lie algebras
  for any function $\phi:\mathbb N\rightarrow
 \{\pm\}$,
 and establish a criterion for their irreducibility in Theorem \ref{thm-phi-imaginary-irred}. In
 Section~4, we consider various types of modules 
 (torsion, locally-finite, diagonal, and admissible)  for the
 Heisenberg algebra. 
 Theorems~\ref{thm-Z-infty-gradation} and \ref{theorem-irred-gr} (see Corollary
 \ref{cor-irred-gr-G})
 provide the classification
 of all irreducible $\Z$-graded admissible diagonal modules of nonzero level for
 the Heisenberg algebra. Finally, in Section~5,  we introduce
 generalized loop modules for affine Lie algebras and study their structure. These
 modules are induced from finitely generated $\Z$-graded irreducible diagonal modules over the
 Heisenberg subalgebra of $\G$. Our main result is Theorem \ref{theor-irred-gener-loop} which
 establishes the irreducibility of any generalized loop module
  induced from such an irreducible diagonal module of nonzero level for the Heisenberg subalgebra.
Hence, we develop a method for constructing new irreducible modules for
affine Lie algebras starting from an irreducible diagonal
module over the Heisenberg subalgebra.  In general, these modules cannot be obtained
by the pseudo-parabolic induction method
considered in \cite{FK}.

\section{Preliminaries}

Let $\G$ denote an affine Lie algebra over the complex numbers $\mathbb C$.
Associated to $\G$ is a finite-dimensional simple Lie subalgebra $\g$  with Cartan subalgebra
$\h$ and root system  $\dot  \D$.   There are elements $c,d$ of $\G$ (the canonical central element
$c$ and the degree derivation $d$) so that  $\H = \h \oplus \mathbb Cc
\oplus \mathbb Cd$ is a Cartan subalgebra of $\G$, and
$\ZZ = \mathbb Cc$ is the center
of $\G$.    The algebra  $\G$ has a root space decomposition  $$
\G = \H \oplus  \bigoplus_{\alpha \in \H^* \backslash \{0\}}
\G_\alpha
$$ relative to $\H$,
where $\G_\alpha = \{ x \in \G \mid  [h, x] = \alpha(h) x \,
{\text{ for all }} \, h \in \H\}$.    The set  $\Delta = \{ \alpha
\in \H^* \backslash \{0\} \, | \, \G_\alpha \neq 0\}$ is the root
system of $\G$.     Let $\dot \D = \dot \D_+ \cup \dot \D_-$ be
a decomposition of the corresponding finite root system $\dot \D$ of $\g$ into positive and negative roots relative to a base $\Pi$ of simple roots.
When there are two root lengths,  let $\dot \D_l$ and $\dot \D_s$ denote the long and short roots in $\dot \D$ respectively.
The root system $\D$ of $\G$ has a natural partition into positive
and negative roots, $\D = \D_+ \cup \D_-$,  where  $\D_- = -\D_+$.    Moreover,
 $\Delta_+ = \D_+^{\mathsf {re}} \cup \D_+^{\mathsf {im}}$, where the imaginary positive roots
 $\D_+^{\mathsf {im}} =  \{n \delta \mid  n \in \Z_{>0}\}$  are positive integer multiples
 of the indivisible imaginary root $\delta$, and the real positive roots  $\D_+^{\mathsf {re}}$  are given by
 \begin{equation}\label{eq:posreal}
\D^{\mathsf{re}}_+ = \begin{cases}   \{ \alpha + n\delta\ |\ \alpha \in \dot \D_+, \, n \in \Z\},
 \hspace{1.7 truein}  \hbox{\rm  if $r = 1$  (the untwisted case),  }  \\
 \{ \alpha + n\delta\ |\ \alpha \in (\dot \D_s)_+, n \in \Z\}\, \cup \,  \{ \alpha + nr\delta\ |\ \alpha \in (\dot \D_l)_+, n \in \Z \}
  \qquad \hbox{\rm  if $r = 2,3$ and} \\
  \hspace{4.7 truein} \hbox{\rm  not $\mathsf{A}_{2\ell}^{(2)}$ type,}  \\
 \{ \alpha + n\delta\ |\ \alpha \in (\dot \D_s)_+, n \in \Z\}\, \cup \,  \{ \alpha + 2n\delta\ |\ \alpha \in (\dot \D_l)_+, n \in \Z \} \\
 \hspace{1.2 truein} \cup \, \{\frac{1}{2}\left ( \alpha + (2n-1)\delta\right) \ |\ \alpha \in (\dot \D_l)_+, n \in \Z \} \qquad  \qquad \hbox{\rm  if $\mathsf{A}_{2\ell}^{(2)}$ type.}  \\
\end{cases}
\end{equation}

We refer to \cite{K} for basic results on Kac-Moody
theory and for the notation used in \eqref{eq:posreal}.

A subset $\SF$ of $\D$ affords a \emph{partition} of $\D$ if $\SF \cup (-\SF) = \D$
and $\SF \cap (-\SF) = \emptyset$.
A  partition $\D
= \SF \cup ( -\SF)$ is said to be {\it closed} if whenever $\alpha$
 and $\beta$
are in $\SF$ and $\alpha + \beta \in \D$, then $\alpha + \beta
\in\SF$.   For any  $\SF$ giving a closed partition of $\D$,  the spaces $\G_{\SF} =
\bigoplus_{\alpha \in \SF}\G_{\alpha}$ and  $\G_{-\SF} =
\bigoplus_{\alpha \in -\SF}\G_{\alpha}$ are subalgebras of $\G$, and
 $\G = \G_{-\SF} \oplus \H \oplus
\G_\SF$ is a triangular decomposition of $\G$.


A \emph{weight module} $\VV$
with respect to $\H$ has a decomposition $\VV = \bigoplus_{\l \in \H^*} \VV_\l$, where
$\VV_\l = \{v \in \VV \mid hv = \l(h)v$ for all $h \in \H\}$,
and we say that the set of weights of $\VV$  is the \emph{support} of $\VV$
and write  $\supp(\VV)=\{\lambda\in \H^* \mid  \VV_{\lambda}\neq 0\}$.

\subsection{Imaginary Verma modules} \quad

Let $\D = \SF \cup (-\SF)$ denote a closed partition of $\D$.
By the Poincar\'e-Birkhoff-Witt theorem, the triangular
decomposition $\G = \G_{-\SF} \oplus \H \oplus \G_{\SF}$
of $\G$ afforded by $\SF$ determines a triangular
decomposition of the universal enveloping algebra  $\UU(\G)$ of
$\G$ given by $\UU(\G) = \UU(\G_{-\SF}) \otimes \UU(\H)
\otimes \UU(\G_{\SF})$.   Let  $\Bo_{\SF}=\H\oplus \G_{\SF}$  be
the associated Borel subalgebra.   Any $\lambda \in \H^*$
extends to an algebra homomorphism (also denoted by $\lambda$) on the enveloping algebras
$\UU(\H)$ and  $\UU(\Bo_{\SF})$ with zero values on $\G_{\SF}$.
Corresponding to any such $\lambda$ is a one-dimensional $\UU(\Bo_{\SF})$-module $\mathbb Cv$
with  $xv = \l(x)v$ for all $x \in \UU(\Bo_{\SF})$.    The induced module
$$
\MM_{\SF}(\l) = \UU(\G) \otimes_{\UU(\Bo_{\SF})} \mathbb C v,
$$
is a \emph{Verma type module} as defined in \cite{Co} and \cite{FS}.
The canonical central element $c$ acts by multiplication by  the scalar
$\l(c)$ on $\MM_{\SF}(\l)$, and we say that $\l(c)$ is the \emph{level} of $\MM_{\SF}(\l)$.
Clearly, $\MM_{\SF}(\l)\simeq \UU(\G_{-\SF})$ as a
$\G_{-\SF}$-module.

When $\SF = \Delta_+$, the module $\MM_{\SF}(\l)$ is an \emph{imaginary Verma module}. It was shown in
\cite{F2} that  the imaginary Verma module $\MM_{\SF}(\l)$ is
irreducible if and only if $\l(c) \neq 0$.

\section{Verma modules corresponding to the map $\phi$}

\subsection{$\phi$-Verma modules for the Heisenberg subalgebra} \quad

The subspace  $\LL:= \mathbb Cc \oplus \bigoplus_{n \in \Z \setminus \{0\}} \G_{n\delta}$ forms a Heisenberg Lie subalgebra of
the affine algebra $\G$.   Thus,
$[x,y] = \xi(x,y)c$ for all $x \in \G_{m\delta}, y \in \G_{n \delta}$,  where
$\xi(x,y)$ is a certain skew-symmetric bilinear form with $\xi(\G_{m\delta}, \G_{n \delta}) =
0$ if $n \neq - m$, and whose restriction to $\G_{m \delta} \times \G_{-m \delta}$
is nondegenerate for all $m \neq 0$.
The algebra $\LL$ has  a triangular decomposition $\LL = \LL^- \oplus \C c
\oplus \LL^+$, where $\LL^{\pm} = \bigoplus_{n \in \mathbb N} \G_{\pm n \delta}$.

Now let $\phi: \mathbb N\rightarrow \{\pm\}$ be an arbitrary function defined on $\mathbb N = \{1,2,\dots\}$.
 The spaces $$\LL_{\phi}^{\pm}= \Big(\bigoplus_{n\in \mathbb N, \phi(n)=\pm }
\G_{n\delta}\Big)\oplus \Big(\bigoplus_{m\in \mathbb N, \phi(m)=\mp }
\G_{-m\delta}\Big)$$ are abelian subalgebras of $\LL$, and $$\LL= \LL_{\phi}^{-}\oplus \mathbb C c\oplus
\LL_{\phi}^{+}$$ is a triangular decomposition.   Of course, if $\phi(n) = +$ for all $n \in \mathbb N$,
then $\LL_{\phi}^{+} = \LL^+$,  and this is just the triangular decomposition above.

Let $\mathbb C v$ be a one-dimensional
representation of $\mathbb C c\oplus \LL_{\phi}^{+}$, where $cv=a
v$ for some $a\in \mathbb C$ and $\LL_{\phi}^{+}v=0$.  The corresponding \emph{$\phi$-Verma module}  is
the induced module
$$\MM_{\phi}(a)=\UU(\LL)\otimes_{\UU(\mathbb C c\oplus \LL_{\phi}^{+})}\mathbb C
v.$$ Clearly, $\MM_{\phi}(a)$ is free as a $\UU(\LL_{\phi}^{-})$-module of rank 1 generated
by the vector $1 \otimes v$.

When $\phi(n)=+$ for all $n \in \mathbb N$,  then $\MM_{\phi}(a)$ is just the usual
Verma module for the Heisenberg Lie algebra $\LL$. Note that if $\phi_1\neq \phi_2$ then $\MM_{\phi_1}(a)$ and $\MM_{\phi_2}(a)$ are not isomorphic.

\begin{remark}  {\rm Let $\SF = \D_+$ and consider the imaginary Verma module
$\MM_{\SF}(\l)$ where $\l \in \H^*$.      This module has
both finite- and infinite-dimensional weight spaces relative to $\H$.
By \cite{F2}, the sum of the finite-dimensional weight spaces in $\MM_{\SF}(\l)$  is the
Verma module $\MM_{\phi}(\l(c))$  for the Heisenberg
subalgebra $\LL$, where $\phi$ is the function with
$\phi(n) = +$ for all $n \in \mathbb N$.} \end{remark}

Since $\UU(\LL)$ has a natural $\mathbb Z$-gradation, we obtain for an arbitrary function $\phi$ the following:

\begin{proposition}\label{prop:phi-graded}
$\MM_{\phi}(a)$ is a $\mathbb Z$-graded $\LL$-module, where
$$\MM_{\phi}(a)=\bigoplus_{n\in \mathbb Z}\MM_{\phi}(a)_{n},$$
and $\MM_{\phi}(a)_{n}=\UU(\LL_{\phi}^{-})_n v$.   If   $\phi(k)\neq
\phi(\ell)$ for some $k,\ell \in \mathbb N$,  then $\MM_{\phi}(a)_{n}$ is
infinite-dimensional for any $n \in \mathbb Z$.
\end{proposition}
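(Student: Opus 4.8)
The plan is to deduce everything from the standard $\mathbb Z$-gradation on $\LL$ in which $\LL_0=\C c$ and $\LL_n=\G_{n\delta}$ for $n\neq 0$; this is a Lie algebra gradation because $[\G_{m\delta},\G_{n\delta}]\subseteq\G_{(m+n)\delta}$ when $m+n\neq 0$ and $[\G_{m\delta},\G_{-m\delta}]\subseteq\C c=\LL_0$. First I would note that the triangular decomposition $\LL=\LL_\phi^{-}\oplus\C c\oplus\LL_\phi^{+}$ respects this gradation: each of $\LL_\phi^{+}$ and $\LL_\phi^{-}$ is by definition a sum of root spaces $\G_{\pm n\delta}$, hence a graded subspace, and $\C c\oplus\LL_\phi^{+}$ is a graded subalgebra whose only component in degree $0$ is $\C c$. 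Grading $\C v$ by placing $v$ in degree $0$ is then consistent with the $\UU(\C c\oplus\LL_\phi^{+})$-action, since $\LL_\phi^{+}$ acts by zero and contributes nothing in degree $0$. Passing to enveloping algebras, $\UU(\LL)$ acquires a $\mathbb Z$-gradation, and the PBW decomposition $\UU(\LL)=\UU(\LL_\phi^{-})\otimes\UU(\C c\oplus\LL_\phi^{+})$ is one of graded vector spaces. Inducing therefore gives a graded isomorphism $\MM_\phi(a)\cong\UU(\LL_\phi^{-})$, $u\otimes v\mapsto u$, of graded $\UU(\LL_\phi^{-})$-modules, whence $\MM_\phi(a)=\bigoplus_{n}\MM_\phi(a)_n$ with $\MM_\phi(a)_n=\UU(\LL_\phi^{-})_n v$. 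This settles the first assertion.

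For the dimension statement, the key extra input is that $\LL_\phi^{-}$ is abelian, so $\UU(\LL_\phi^{-})$ is the symmetric (polynomial) algebra $S(\LL_\phi^{-})$ on the graded vector space $\LL_\phi^{-}$; in particular monomials in linearly independent homogeneous elements are linearly independent and their degrees add. Now the hypothesis $\phi(k)\neq\phi(\ell)$ for some $k,\ell$ simply says $\phi$ is non-constant, and a non-constant function $\mathbb N\to\{\pm\}$ must disagree on some consecutive pair: there is $i\in\mathbb N$ with $\phi(i)\neq\phi(i+1)$ (otherwise $\phi$ would be constant by induction on $\mathbb N$). After interchanging the two cases if necessary, this yields nonzero homogeneous elements $y\in(\LL_\phi^{-})_a$ and $x\in(\LL_\phi^{-})_{-b}$ with $\{a,b\}=\{i,i+1\}$ --- concretely, if $\phi(i)=-$ and $\phi(i+1)=+$ take $y\in\G_{i\delta}$, $x\in\G_{-(i+1)\delta}$, while if $\phi(i)=+$ and $\phi(i+1)=-$ take $y\in\G_{(i+1)\delta}$, $x\in\G_{-i\delta}$ (the relevant root spaces are nonzero, which is exactly why $\LL$ is infinite-dimensional). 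In either case $x$ and $y$ lie in distinct root spaces, hence are linearly independent, so the monomials $x^p y^q$, $p,q\geq 0$, are linearly independent in $S(\LL_\phi^{-})$, and $\gcd(a,b)=1$.

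To finish, fix $n\in\mathbb Z$. The vector $x^p y^q v$ is homogeneous of degree $aq-bp$, so it suffices to produce infinitely many $(p,q)\in\mathbb Z_{\geq 0}^2$ with $aq-bp=n$. As $\gcd(a,b)=1$, this equation has an integer solution $(p_0,q_0)$; then $(p_0+at,\ q_0+bt)$ is a solution for every $t\in\mathbb Z$, and both coordinates are nonnegative once $t$ is large (since $a,b>0$), giving infinitely many admissible pairs. The corresponding vectors $x^p y^q v$ are distinct, homogeneous of degree $n$, and linearly independent in $\MM_\phi(a)\cong S(\LL_\phi^{-})$; hence $\MM_\phi(a)_n$ is infinite-dimensional.

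I do not anticipate a genuine obstacle. The one point that needs a moment's thought is that two arbitrarily chosen available degrees in $\LL_\phi^{-}$ need not be coprime, so $x^p y^q v$ might realize only the degrees lying in a proper subgroup of $\mathbb Z$; this is precisely why I pass to a consecutive pair $i,i+1$, which is available exactly because $\phi$ is non-constant. The remaining verifications --- that the gradation descends through induction via PBW, and that the chosen $x,y$ generate a polynomial subalgebra --- are routine once $\LL_\phi^{-}$ is known to be abelian.
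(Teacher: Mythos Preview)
Your proof is correct and follows essentially the same strategy as the paper's: exhibit elements of $\LL_\phi^{-}$ in a positive and a negative degree that are coprime, and then use monomials in those two elements (linearly independent because $\LL_\phi^{-}$ is abelian) to produce infinitely many vectors in each $\MM_\phi(a)_n$. The only difference is the choice of coprime pair---the paper uses the pair $(1,r)$ coming from $\phi(1)$ and some $r$ with $\phi(r)\neq\phi(1)$, while you use a consecutive pair $(i,i+1)$ where $\phi$ changes value---and one small notational point: you reuse the symbol $a$ for a degree when it already denotes the level.
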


\begin{proof} Suppose $n \in \mathbb Z_{\geq 0}$ and set $\MM = \MM_\phi(a)$.
If $\phi(1) = -$, then there is some $r \in \mathbb N$ so that $\phi(r) = +$.   Let $x \in \LL_\delta$ and $y \in \LL_{-r \delta}$ be nonzero.
Since the vectors  $x^{n+kr}y^{k} v \in \MM_n$ are linearly independent  for all $k \geq 0$,  we have that $\MM_n$ is infinite dimensional.
Similarly,  the vectors $x^{(rk-1)n} y^{kn} v \in \MM_{-n}$ are linearly independent for all $k \geq 1$, so that $\MM_{-n}$ is infinite dimensional as well.
The argument when $\phi(1) = +$ is analogous.
\end{proof}

\begin{proposition}\label{prop-irred-phi-Verma}
$\MM_{\phi}(a)$ is irreducible if and only if $a\neq 0$.
\end{proposition}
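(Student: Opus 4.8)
The plan is to prove both directions by analyzing the $\Z$-grading on $\MM_\phi(a)$ given in Proposition~\ref{prop:phi-graded}. For the easy direction, suppose $a = 0$. Then $c$ acts trivially, so the bracket $[\LL_{\phi}^-, \LL_{\phi}^+] = \xi(\cdot,\cdot)c$ vanishes on $\MM_\phi(0)$, and $\LL_{\phi}^+$ acts by zero on the generator $v$. Since $\LL = \LL_\phi^- \oplus \C c \oplus \LL_\phi^+$ and $\LL_\phi^-$ is abelian, the subspace $\UU(\LL_\phi^-)_{>0}\,v = \bigoplus_{n \ne 0}\MM_\phi(0)_n$ (or more simply any single homogeneous line in positive or negative degree) is a proper nonzero $\LL$-submodule: $\LL_\phi^-$ preserves it by abelianness, $c$ and $\LL_\phi^+$ kill everything. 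Hence $\MM_\phi(0)$ is reducible. (One must be a little careful about which degrees are nonempty; since $\LL_\phi^-$ is nonzero in infinitely many degrees, $\MM_\phi(0)$ is supported in infinitely many degrees, so a proper nonzero homogeneous submodule certainly exists.)

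For the main direction, assume $a \ne 0$ and let $0 \ne \WW \subseteq \MM := \MM_\phi(a)$ be a submodule. First I would reduce to the graded case: since the grading on $\MM$ is by $\Z$ with each action of a root vector $\G_{\pm n\delta}$ shifting degree by $\pm n$, a standard argument (take an element of $\WW$ with the fewest homogeneous components and act by suitable homogeneous elements, or invoke that $\UU(\LL)$ is $\Z$-graded and $\MM$ is generated in a single degree) shows $\WW$ contains a nonzero homogeneous element. Then it suffices to show that any nonzero homogeneous $w \in \MM_m$ generates all of $\MM$ under $\UU(\LL)$; equivalently, that $\UU(\LL)w \ni v$, since $v$ generates $\MM$.

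The key step is a "lowering to the bottom" argument using the nondegeneracy of $\xi$. Fix a nonzero homogeneous $w \in \MM_m$. Write $w$ as a sum of PBW monomials in a fixed basis of $\LL_\phi^-$ applied to $v$; pick such a basis refining the grading, say $\{x_{i,n}\}$ with $x_{i,n}$ of degree $\eps_\phi(n)\, n$ where $\eps_\phi(n) = \pm$ according to which summand of $\LL_\phi^-$ it lies in. For each $n$ with $\phi(n) = +$ the space $\G_{n\delta} \subseteq \LL_\phi^+$ pairs nondegenerately with $\G_{-n\delta}\subseteq \LL_\phi^-$ via $\xi$, and for $n$ with $\phi(n) = -$ it is $\G_{-n\delta}\subseteq \LL_\phi^+$ that pairs with $\G_{n\delta}\subseteq \LL_\phi^-$; in both cases, because $a \ne 0$, elements of $\LL_\phi^+$ act on $\MM \cong \UU(\LL_\phi^-)v$ as (scalar multiples of) partial-derivative-type operators lowering the degree, with the "creation/annihilation" pairing controlled by $a\,\xi$. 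Applying a well-chosen monomial in $\UU(\LL_\phi^+)$ to $w$ — annihilating all the $\LL_\phi^-$-factors appearing in the top PBW term of $w$ — produces a nonzero scalar multiple of $v$, after checking that the lower-order cross terms from the commutators (each carrying a factor of $a$) do not conspire to cancel. This is exactly the Heisenberg/oscillator computation that $\UU(\LL_\phi^+)$ acts by Weyl-algebra operators on the "Fock-type" module $\UU(\LL_\phi^-)v$, and nondegeneracy of $\xi$ together with $a \ne 0$ guarantees every nonzero element can be brought down to the vacuum line.

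The main obstacle I anticipate is the bookkeeping in this last step: one must order the (infinitely many, but only finitely many relevant for a given $w$) PBW variables, track the shift operators carefully, and verify that repeatedly applying annihilation operators to the top monomial does not get cancelled by contributions coming from other monomials of the same multidegree — i.e., that the pairing matrix one inverts is nonsingular. The cleanest way around this is to choose dual bases of $\G_{n\delta}$ and $\G_{-n\delta}$ with respect to $\xi$ for each relevant $n$, so that the action of $\LL_\phi^+$ becomes literally $a\,\partial/\partial x$ in the corresponding polynomial variables; then the claim reduces to the elementary fact that a nonzero polynomial in (finitely many) variables can be reduced to a nonzero constant by applying an appropriate monomial differential operator. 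Once $v \in \UU(\LL)w \subseteq \WW$, we conclude $\WW = \MM$, proving irreducibility.
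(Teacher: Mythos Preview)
Your ``cleanest way'' paragraph is exactly the paper's proof: choose dual bases $\{x_i\}$ of $\LL_\phi^+$ and $\{y_i\}$ of $\LL_\phi^-$ with $[x_i,y_j]=\delta_{i,j}c$, identify $\MM_\phi(a)$ with polynomials in the $y_i$ applied to $v$, observe that each $x_i$ acts as $a\,\partial/\partial y_i$, and then hit an arbitrary nonzero $w=\sum_{\bar k}\xi(\bar k)y(\bar k)v$ with the monomial operator $x(\bar m)$ corresponding to the lex-largest $\bar m$ with $\xi(\bar m)\neq 0$ to obtain a nonzero multiple of $v$. So the essential strategy matches the paper.

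The one point to flag is your preliminary ``reduce to a nonzero homogeneous element'' step. Neither of the two justifications you offer actually works for $\LL$-submodules: $\LL$ contains no grading operator (the derivation $d$ lives in $\G$, not in $\LL$), so an arbitrary $\LL$-submodule of a $\Z$-graded $\LL$-module need not be graded, and the ``fewest components plus act by homogeneous elements'' trick has no mechanism here to decrease the number of components without risking zero. Fortunately the reduction is entirely unnecessary: the differential-operator argument in your last paragraph applies to any nonzero $w$, homogeneous or not, and the paper runs it directly on an arbitrary $w$. Simply delete the reduction step and start from an arbitrary nonzero $w\in\WW$; the rest of your outline then coincides with the paper's argument. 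For the $a=0$ direction, your description is fine and agrees with the paper's observation that $\bigoplus_{\bar k\neq \bar 0}\C\,y(\bar k)v$ is a proper submodule.
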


\begin{proof}  Let $\{x_i\}_{i \in \mathbb N}$ be a basis of $\LL_\phi^+$ of root vectors, and let
$\{y_i\}_{i \in \mathbb N}$ be the dual basis of $\LL_\phi^-$ so that  $[x_i, y_j] = \delta_{i,j}c$ for all $i,j$.
Set $\bar k = (k_1, k_2, \ldots )$ where $k_i \in \mathbb N \cup \{0\}$ for each $i$ and only finitely
many $k_i$ are nonzero, and say $\bar k < \bar \ell$ if there is some $s$ such that $k_i = \ell_i$ for $i < s$
and $k_s < \ell_s$.    The elements
$x(\bar k) = \prod_{i} x_i^{k_i}$ and $y(\bar k) = \prod_{i}y_i^{k_i}$ are well defined since $\LL_\phi^{+}$
and $\LL_\phi^-$ are abelian.     Moreover, the  vectors $y(\bar k)v$ as $\bar k$ ranges over all such tuples in
$(\mathbb N \cup \{0\})^\infty$ form a basis for $\MM_\phi(a)$.

Now suppose $a \neq 0$, and let $w = \sum_{\bar k} \xi(\bar k) y(\bar k)v$ be a nonzero element of $\MM_\phi(a)$,
where only finitely many of the scalars $\xi(\bar k) \in \mathbb C$ are nonzero.     Let $\bar m$ be the largest
tuple with $\xi(\bar m) \neq 0$.       Then  since $x_i y_i^\ell v = [x_i, y_i^\ell]v = \ell a y_i^{\ell-1}v$ for each $\ell \geq 0$, it follows that

$$x(\bar m)w =  \xi(\bar m) \left(\prod_{i} m_i ! \right) a^{\sum_i m_i} v.$$
Since $a \neq 0$, this implies that the submodule generated by $w$ contains $v$ and so is all of $\MM_\phi(a)$.   But  $w$ was
an arbitrary nonzero element, so $\MM_\phi(a)$ is irreducible in this case.

If $a = 0$,  then $\NN: = \bigoplus_{\bar k \neq \bar 0}  \mathbb C y(\bar k) v$ is a proper submodule.  \end{proof}

\subsection{$\phi$-imaginary Verma modules for $\G$} \label{phimage}

For $\phi: \mathbb{N} \rightarrow \{\pm \}$, next we construct a $\G$-module containing a submodule for
the Heisenberg subalgebra $\LL$  isomorphic to  $\MM_{\phi}(a)$.
Let $\lambda\in \H^*$ and assume $\lambda(c)=a$.
For $$
\SF_{\phi}= \D^{\mathsf{re}}_+
\cup \{n \delta\ |\  n \in \mathbb N, \phi(n)=+\}\cup \{-m \delta\
|\ m \in \mathbb N, \phi(m)=-\},
$$
where $\D^{\mathsf{re}}_+$ is as in \eqref{eq:posreal}, the spaces  $\G_{\SF_{\phi}} = \bigoplus_{\alpha \in \SF_{\phi}} \G_\alpha$ and
$\G_{-\SF_{\phi}} = \bigoplus_{\alpha\in  -\SF_{\phi}} \G_\alpha$ are subalgebras of $\G$
affording a triangular decomposition
 $\G = \G_{-\SF_{\phi}} \oplus \H \oplus
\G_{\SF_{\phi}}$ of $\G$.   Let $\Bo_{\phi}=\H\oplus \G_{\SF_{\phi}}$ be the Borel subalgebra corresponding
to $\SF_\phi$, and observe that $\Bo_{\phi} \supset  \mathbb C c \oplus \LL_{\phi}^+$.
Let $\mathbb C v_{\lambda}$ be a one-dimensional module for $\Bo_{\phi}$
with $\G_{\SF_{\phi}}v_{\lambda}=0$ and
$hv_{\lambda}=\lambda(h)v_{\lambda}$ for all $h\in \H$.

We say that the $\G$-module
$$\MM_\phi(\lambda): = \MM_{\SF_{\phi}}(\lambda) =\UU(\G)\otimes_{\UU(\Bo_{\phi})}\mathbb C v_{\lambda}$$
is a $\phi$-{\it imaginary Verma module}.    We identify $1 \otimes v_\lambda$ with $v_\l$.
The $\UU(\LL)$-submodule of $\MM_\phi(\lambda)$  generated
by $v_\l$   is isomorphic to $\MM_\phi(a)$.
 If  $\phi(n)=+$ for
all $n$,  then $\MM_{\phi}(\lambda)$ coincides with the imaginary
Verma module $\MM_{\SF}(\lambda)$ above with $\SF = \D_+$.

In the proposition below we collect some basic statements about
the structure of  $\MM_{\phi}(\lambda)$.   The proofs are similar to the
proofs of corresponding properties for the imaginary Verma modules in
\cite [Props. 3.4 and 5.3]{F2} and so are omitted.

\begin{proposition}\label{prop-phi-imaginary} Let $\l \in \H^*$ and assume
$\l(c)=a$.  If $a\neq 0$, then $\MM_{\phi}(\lambda)$ has the following properties.
\begin{itemize} \item  $\MM_{\phi}(\lambda)$ is a free
$\UU(\G_{-\SF_{\phi}})$-module of rank 1.
\item  $\MM_{\phi}(\lambda)$
has a unique maximal submodule and hence a unique irreducible
quotient.
\item  $\supp\left(\MM_{\phi}(\lambda)\right)=\bigcup_{\beta\in \dot
\QQ_+}\{\lambda-\beta+n\delta \mid n\in \mathbb Z\}$,  where $\dot \QQ_+$ is
the free abelian monoid generated by all the simple roots in the base $\Pi$ of $\dot \Delta_+$
(In the $\mathsf{A}_{2\ell}^{(2)}$-case, $\dot \QQ_+$ is the free abelian group generated by
the simple roots  $\alpha \in (\dot \Delta_s)_+$ and by the  $\frac{1}{2} \alpha$  for the simple roots $\alpha \in (\dot \Delta_l)_+$.)
\item If $\phi(k)\neq \phi(\ell)$ for
some $k,\ell \in \mathbb N$, then $\dim \MM_{\phi}(\lambda)_{\mu} = \infty$ for any
$\mu \in \supp\left(\MM_{\phi}(\lambda)\right)$.
\end{itemize}
\end{proposition}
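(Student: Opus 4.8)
The plan is to derive all four items from the PBW theorem applied to the triangular decomposition $\G = \G_{-\SF_\phi} \oplus \H \oplus \G_{\SF_\phi}$, together with the analogous facts already established for the Heisenberg module $\MM_\phi(a)$ in Propositions~\ref{prop:phi-graded} and~\ref{prop-irred-phi-Verma}. First I would observe that $\MM_\phi(\lambda) \simeq \UU(\G_{-\SF_\phi}) \otimes_{\mathbb C} \mathbb C v_\lambda$ as a $\UU(\G_{-\SF_\phi})$-module, which is item~(1) and is immediate from PBW since $\mathbb C v_\lambda$ is a one-dimensional $\Bo_\phi$-module. For this it is worth recording explicitly that $\SF_\phi$ affords a closed partition of $\D$: the real roots $\D_+^{\mathsf{re}}$ form a closed set (this is the standard fact from \cite{K}), adding the $n\delta$ with $\phi(n)=+$ and the $-m\delta$ with $\phi(m)=-$ keeps it closed because $\delta$ is $\dot\g$-invariant and sums $(\alpha + n\delta) + (\beta + m\delta)$ land among real roots whenever $\alpha+\beta \in \dot\D$, while $(\alpha+n\delta)+(-\beta+m\delta)$ with $\alpha=\beta$ lands on $(n+m)\delta$ and one checks the sign condition $\phi$ is consistent with — this is exactly the verification behind $\LL_\phi^{\pm}$ being subalgebras, already used in the excerpt.

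For item~(3), the support computation, I would compute weights directly on the PBW basis. A basis of $\G_{-\SF_\phi}$ consists of root vectors of weights $-\beta + n\delta$ with $\beta \in \dot\QQ_+ \setminus\{0\}$ and $n\in\Z$ (from the negatives of the real roots), together with the Heisenberg root vectors of weight $k\delta$ for $k$ in the appropriate sign classes. Applying a monomial in these to $v_\lambda$ produces a vector of weight $\lambda - \beta + n\delta$ for suitable $\beta\in\dot\QQ_+$, $n\in\Z$; conversely every such weight is attained because for fixed $\beta$ one is free to adjust $n$ by applying Heisenberg generators of either sign (there is always at least one positive-type and, trivially in the other direction, one can also use real root vectors $-\alpha+n\delta$ to shift $n$). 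So $\supp(\MM_\phi(\lambda)) = \bigcup_{\beta\in\dot\QQ_+}\{\lambda-\beta+n\delta \mid n\in\Z\}$, with the stated modification in the $\mathsf A_{2\ell}^{(2)}$ case coming from the half-integer real roots in \eqref{eq:posreal}.

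For item~(2), uniqueness of the maximal submodule, the standard argument is: any proper submodule $N$ is $\H$-graded, hence $N = \bigoplus_\mu N_\mu$, and $N_\lambda = 0$ since $\MM_\phi(\lambda)_\lambda = \mathbb C v_\lambda$ is one-dimensional and any submodule containing $v_\lambda$ is everything (as $v_\lambda$ generates). Therefore the sum of all proper submodules still avoids the weight space $\mathbb C v_\lambda$, so it is proper and is the unique maximal submodule. For item~(4), the infinite-dimensionality of weight spaces when $\phi$ is nonconstant: the weight space $\MM_\phi(\lambda)_{\lambda+n\delta}$ is spanned by $\UU(\LL_\phi^-)_n v_\lambda$, which is exactly the graded piece $\MM_\phi(a)_n$ of the Heisenberg module, infinite-dimensional by Proposition~\ref{prop:phi-graded}; for a general weight $\mu = \lambda - \beta + n\delta$ one fixes one PBW monomial $u$ in the negative real root vectors realizing $-\beta$ and then notes that $u \cdot \UU(\LL_\phi^-)_{n'} v_\lambda$ for varying $n'$ (with the total $\delta$-shift adjusted to $n$) gives infinitely many linearly independent vectors — linear independence follows from the freeness in item~(1), i.e. from PBW, since these are distinct basis monomials. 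The main obstacle I anticipate is precisely this last bookkeeping step: one must be careful that applying a negative-real-root monomial to the infinitely many independent Heisenberg vectors does not collapse them, which is why invoking the freeness of $\MM_\phi(\lambda)$ over $\UU(\G_{-\SF_\phi})$ — rather than arguing inside the Heisenberg submodule alone — is the clean way to close the argument; everything else is a routine transcription of the $\phi \equiv +$ case treated in \cite{F2}.
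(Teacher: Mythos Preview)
Your argument for item~(2) has a genuine gap. You assert that $\MM_\phi(\lambda)_\lambda = \mathbb C v_\lambda$ is one-dimensional, but this is false whenever $\phi$ is nonconstant: the $\lambda$-weight space is $\UU(\LL_\phi^-)_0\, v_\lambda \cong \MM_\phi(a)_0$, which is infinite-dimensional by Proposition~\ref{prop:phi-graded}---indeed this is precisely what item~(4) says for $\mu = \lambda$. So the usual ``highest weight space is a line, hence proper submodules miss it, hence their sum is proper'' argument does not go through here. Notice too that your argument for (2) nowhere uses the hypothesis $a \neq 0$, which should already be a warning sign.

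The fix does use $a \neq 0$, via Proposition~\ref{prop-irred-phi-Verma}. Set $\widehat\MM := \bigoplus_{n \in \Z} \MM_\phi(\lambda)_{\lambda + n\delta}$; as you observed in the course of (3) and (4), this equals $\UU(\LL_\phi^-) v_\lambda$ and is an $\LL$-submodule isomorphic to $\MM_\phi(a)$, hence irreducible since $a \neq 0$. Any $\G$-submodule $N$ is $\H$-graded, so $\widehat N := N \cap \widehat\MM$ is an $\LL$-submodule of $\widehat\MM$; by irreducibility either $\widehat N = \widehat\MM$ (so $v_\lambda \in N$ and $N = \MM_\phi(\lambda)$) or $\widehat N = 0$. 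Thus every proper submodule has trivial intersection with $\widehat\MM$, and therefore so does their sum, which is then the unique maximal submodule. (The paper omits the proof and refers to \cite{F2}; this is what the transcription to general $\phi$ amounts to.)

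One smaller correction: $\SF_\phi$ is \emph{not} in general a closed partition---if $\phi(1)=+$ and $\phi(2)=-$ then $\delta, \delta \in \SF_\phi$ but $2\delta \notin \SF_\phi$. What is true, and is all you need for item~(1), is that $\G_{\pm\SF_\phi}$ are subalgebras: the failure of closedness occurs only among imaginary roots, and there $[\G_{k\delta}, \G_{l\delta}] = 0$ for $k \neq -l$, so it is harmless. With this adjustment your treatment of (1), (3), and (4) is fine.
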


We have the following irreducibility criterion for the modules
$\MM_{\phi}(\lambda)$.

\begin{theorem}\label{thm-phi-imaginary-irred}
Let $\l \in \H^*$, $\l(c)=a$.   Then $\MM_{\phi}(\lambda)$ is
irreducible if and only if $a\neq 0$.

\end{theorem}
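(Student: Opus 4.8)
The plan is to prove both directions separately, with the "only if" direction being the easy one. If $a = 0$, then the central element $c$ acts as zero, and I claim $\MM_\phi(\lambda)$ is reducible. Indeed, the submodule $\NN$ of $\MM_\phi(a)$ constructed in the proof of Proposition~\ref{prop-irred-phi-Verma} (spanned by all $y(\bar k)v$ with $\bar k \neq \bar 0$) generates a proper $\UU(\G)$-submodule: applying $\UU(\G_{-\SF_\phi})$ to $\NN$ stays away from the weight $\lambda$ since no root vector in $\G_{-\SF_\phi}$ can raise the $\dot\QQ_+$-component, and the elements of $\LL_\phi^-$ acting on $\NN$ never produce a multiple of $v_\lambda$ because $c$ acts as $0$. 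More cleanly: the $\lambda$-weight space of $\MM_\phi(\lambda)$ is one-dimensional, spanned by $v_\lambda$, so it suffices to exhibit a proper submodule not containing $v_\lambda$; the $\UU(\G)$-submodule generated by $\NN$ works, because by the PBW decomposition $\UU(\G) = \UU(\G_{-\SF_\phi}) \otimes \UU(\H) \otimes \UU(\G_{\SF_\phi})$ and the structure of $\MM_\phi(a)$ as a Heisenberg module, one checks the $\lambda$-component of $\UU(\G)\NN$ is zero when $a=0$.

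For the "if" direction, assume $a \neq 0$ and let $W$ be a nonzero submodule of $\MM_\phi(\lambda)$. The key point is the weight-space structure from Proposition~\ref{prop-phi-imaginary}: every weight of $\MM_\phi(\lambda)$ has the form $\lambda - \beta + n\delta$ with $\beta \in \dot\QQ_+$, $n \in \Z$. I would first show $W$ contains a weight vector $w$ whose weight $\mu = \lambda - \beta + n\delta$ has $\beta$ of minimal height; then apply the real root vectors $\G_{\alpha+m\delta}$ with $\alpha \in \dot\D_+$ (which lie in $\G_{\SF_\phi}$, hence act "positively" on the $\dot\QQ_+$-grading) to kill $\beta$ entirely, landing inside the $\UU(\LL)$-submodule generated by $v_\lambda$, which is $\MM_\phi(a)$. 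Here I must be careful: I need to check that $\G_{\SF_\phi}\cdot w$ does not vanish prematurely unless $\beta = 0$ already — this is the analogue of the standard argument that a highest-weight-type vector for the non-imaginary directions can be produced. Once $W$ meets $\MM_\phi(a)$ nontrivially, irreducibility of $\MM_\phi(a)$ as a $\UU(\LL)$-module (Proposition~\ref{prop-irred-phi-Verma}, using $a \neq 0$) forces $v_\lambda \in W$, and since $v_\lambda$ generates $\MM_\phi(\lambda)$, we get $W = \MM_\phi(\lambda)$.

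The main obstacle is the middle step: pushing an arbitrary weight vector $w \in W$ down to the Heisenberg submodule by acting with $\G_{\SF_\phi}$ without accidentally annihilating it. The clean way to handle this is to fix a nonzero $w \in W$ with weight $\mu = \lambda - \beta + n\delta$ chosen so that $\mathrm{ht}(\beta)$ is minimal among all weights occurring in $W$, and then argue that $\mathrm{ht}(\beta) = 0$: otherwise some simple root $\alpha_i \in \Pi$ appears in $\beta$, and I claim $\G_{\alpha_i + m\delta} w \neq 0$ for a suitable $m$ (using that $\MM_\phi(\lambda)$ is free over $\UU(\G_{-\SF_\phi})$, so the action of $\G_{\SF_\phi}$ is "faithful enough" — concretely, if $w = \sum u_j v_\lambda$ with $u_j \in \UU(\G_{-\SF_\phi})$ and we pick a term $u_{j_0}$ involving a root vector for $-(\alpha_i + m\delta)$ at the "top", the bracket survives). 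This is exactly the type of PBW-straightening argument used in \cite{F2} for ordinary imaginary Verma modules, and I would either adapt it or, better, reduce directly to the statement already proved there: the real-root part of $\SF_\phi$ is identical to that of $\D_+$, so the reduction of $W$ to the Heisenberg submodule is word-for-word the same as in \cite[Prop.~5.3 or Thm.~1]{F2}, and only the final Heisenberg-module irreducibility input changes (from the ordinary Verma module to $\MM_\phi(a)$, which we have handled in Proposition~\ref{prop-irred-phi-Verma}). Thus I would phrase the proof as: "the argument of \cite{F2} reduces irreducibility of $\MM_\phi(\lambda)$ to that of the Heisenberg module $\MM_\phi(a)$, and the latter holds by Proposition~\ref{prop-irred-phi-Verma} precisely when $a \neq 0$; the converse is immediate from the proper submodule $\UU(\G)\NN$."
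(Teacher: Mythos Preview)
Your approach is correct and matches the paper's: the paper defers this theorem to Section~\ref{sec-irred}, where it is obtained as Corollary~\ref{cor-Verma-irred} of the more general Theorem~\ref{theor-irred-gener-loop} on generalized loop modules $\MM(\lambda,\VV)$; the key Lemma~\ref{lem-top-gener-loop} is precisely the height-induction argument you sketch (pushing any nonzero submodule into the Heisenberg part $\widehat{\MM}$ using real root vectors $x\in\G_{\alpha+m\delta}$ with $|m|$ large), and for $\VV=\MM_\phi(a)$ this specializes to your proposed adaptation of \cite{F2}. One correction on the easy direction: your ``more cleanly'' claim that the $\lambda$-weight space of $\MM_\phi(\lambda)$ is one-dimensional is false whenever $\phi$ takes both signs (see the last item of Proposition~\ref{prop-phi-imaginary}); however your first argument is fine once phrased as follows --- $\NN$ is a $\PP$-submodule of $\VV=\MM_\phi(a)$ (it is $\LL$-stable and $\G_{\SR}\VV=0$), induction $\UU(\G)\otimes_{\UU(\PP)}(-)$ is exact, and $\MM_\phi(\lambda)\cong\UU(\G_{-\SR})\otimes_{\mathbb C}\VV$ as vector spaces, so $\UU(\G)\NN=\UU(\G_{-\SR})\otimes_{\mathbb C}\NN$ is a proper $\G$-submodule not containing $v_\lambda=1\otimes v$.
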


This theorem will be proved in Section~\ref{sec-irred} as a
particular case of the main result (see Corollary~\ref{cor-Verma-irred}).

\section{$\mathbb Z$-graded modules for Heisenberg
algebras}\label{sec-Heis}

In this section we consider $\Z$-graded modules for the Heisenberg
subalgebra $\LL$ with nonzero action of the central element $c$.
To simplify the exposition,  we will assume we have an infinite-dimensional
Heisenberg Lie algebra $\HH= \mathbb C c \oplus \bigoplus_{i\in
\Z\setminus\{0\}} \mathbb C e_i$, where $[e_i, e_j]=\delta_{i,
-j}c$, and  $[e_j, c]=0$ for all $i \geq 1$ and all $j$.
The case of the Heisenberg subalgebra $\LL$ can easily be reduced to $\HH$
by choosing an orthogonal basis in each root space $\G_{k\delta}$
and a dual basis in $\G_{-k \delta}$ for each $k \geq 1$.

Let ${\mathcal{K}}$ denote the category of all $\Z$-graded
$\HH$-modules $\VV$ such that $\VV=\bigoplus_{j\in \Z}\VV_j$ and
$e_i\VV_j\subseteq \VV_{i+j}$.
The
irreducible modules in ${\mathcal{K}}$ on which $c$ acts as zero (which we say have zero level)
have been classified by Chari in \cite{C}.  Irreducible modules with
nonzero level,  but with $0<\dim_{\mathbb C} \VV_j<\infty$
for at least one $j$ have been described in \cite{F3}.

\begin{definition}\label{def-locally-finite} Let $\VV$ be a module for the Heisenberg Lie
algebra $\HH$.   Then we say
\begin{itemize}
\item[{(a)}]  $\VV$ has $i$-{\rm torsion} if $e_ie_{-i}$ has an
eigenvector in $\VV$; 
\item[{(b)}]  $\VV$ is a {\rm torsion}
module if $\VV$ has $i$-torsion for all $i\in \Z\setminus \{0\}$; \item[{(c)}]  $\VV$
is {\rm torsion free} if it has no $i$-{\it torsion} for any $i$;
\item[{(d)}]  $\VV$ is {\rm  locally finite} if for any $i\in \Z\setminus \{0\}$,
$e_i e_{-i}$ is locally finite on $\VV$, that is, \newline $\dim
\mathsf{span}_{\mathbb C}\{(e_ie_{-i})^kv  \mid  k\geq 0\}<\infty$ for any $v\in \VV$.
\item[{(e)}]  $\VV$ is {\rm  diagonal} if the $e_ie_{-i}$ have a common eigenvector in $\VV$ for all $i \in \Z \setminus \{0\}$.
\end{itemize}
\end{definition}

Clearly, diagonal and locally finite $\HH$-modules are
torsion modules.

\begin{lemma}\label{lem-gradation}
Let $V$ be an  $\HH$-module such that $c$ acts by the nonzero scalar $a$ on $\VV$, and
assume $w \in \VV$ is  an eigenvector for $e_j e_{-j}$ with eigenvalue $\lambda$
and for $e_i e_{-i}$ with eigenvalue $\mu$ for some $i \neq \pm j$.
Then  $e_j^r w$ and $e_{-j}^sw$ are eigenvectors for  $e_j e_{-j}$ with eigenvalues
$\lambda -r a$ and $\lambda+sa$ respectively, and they are eigenvectors for
$e_i e_{-i}$ corresponding to eigenvalue $\mu$.
\end{lemma}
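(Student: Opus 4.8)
The plan is to verify the eigenvalue shifts by direct computation using the Heisenberg relations $[e_i, e_j] = \delta_{i,-j}c$ and the fact that $c$ acts as the scalar $a$. The key commutators to establish first are $[e_je_{-j}, e_j] = -a\,e_j$ and $[e_je_{-j}, e_{-j}] = a\,e_{-j}$, which follow immediately: for instance $[e_je_{-j},e_j] = e_j[e_{-j},e_j] + [e_j,e_j]e_{-j} = e_j(-c) = -a\,e_j$, using $[e_{-j},e_j] = -[e_j,e_{-j}] = -c$ (since $e_{-j}$ and $e_j$ are indexed by $-j$ and $j$, so $[e_j,e_{-j}] = c$), and $[e_j,e_j]=0$. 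Similarly $[e_je_{-j},e_{-j}] = e_j[e_{-j},e_{-j}] + [e_j,e_{-j}]e_{-j} = c\,e_{-j} = a\,e_{-j}$.

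Next I would promote these to the power statements by an easy induction: $[e_je_{-j}, e_j^r] = -ra\,e_j^r$ and $[e_je_{-j}, e_{-j}^s] = sa\,e_{-j}^s$. The inductive step uses the derivation property of the bracket, $[e_je_{-j}, e_j^{r}] = [e_je_{-j},e_j]e_j^{r-1} + e_j[e_je_{-j},e_j^{r-1}]$, together with the fact that $e_j$ commutes with itself. From this, $e_je_{-j}(e_j^r w) = [e_je_{-j},e_j^r]w + e_j^r(e_je_{-j}w) = -ra\,e_j^r w + \lambda\,e_j^r w = (\lambda - ra)e_j^r w$, and likewise $e_je_{-j}(e_{-j}^s w) = (\lambda + sa)e_{-j}^s w$, giving the claimed eigenvalues for $e_je_{-j}$.

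For the last assertion, the point is that when $i \neq \pm j$, all of $e_i$, $e_{-i}$ commute with both $e_j$ and $e_{-j}$: indeed $[e_i,e_j] = \delta_{i,-j}c = 0$ and $[e_i,e_{-j}] = \delta_{i,j}c = 0$ precisely because $i \neq \pm j$, and similarly for $e_{-i}$. Hence $e_ie_{-i}$ commutes with $e_j^r$ and with $e_{-j}^s$, so $e_ie_{-i}(e_j^r w) = e_j^r(e_ie_{-i}w) = \mu\,e_j^r w$, and the same for $e_{-j}^s w$. This completes the proof.

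I do not anticipate a genuine obstacle here; the only thing to be careful about is the bookkeeping of signs in $[e_j, e_{-j}] = c$ versus $[e_{-j}, e_j] = -c$, and keeping straight which of $e_je_{-j}$ and $e_{-j}e_j$ is meant (the lemma is stated for the operator $e_je_{-j}$ with $j$ ranging over $\Z\setminus\{0\}$, so the two cases $j$ and $-j$ are both covered and one should check the signs are internally consistent with the relation $e_je_{-j} = e_{-j}e_j + a$).
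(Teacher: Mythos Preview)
Your proof is correct and takes essentially the same approach as the paper, namely direct computation from the Heisenberg relations. The only organizational difference is that the paper first records the intermediate identities $e_{-j}e_j^{r}w=(\lambda-ra)e_j^{r-1}w$ and $e_je_{-j}^{s}w=(\lambda+(s-1)a)e_{-j}^{s-1}w$ (labeled for reuse in a later proof) and then reads off the eigenvalue statements, whereas you work directly with the commutators $[e_je_{-j},e_j^{r}]$ and $[e_je_{-j},e_{-j}^{s}]$.
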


\begin{proof}  The Heisenberg relations imply for $r,s \geq 1$ that
\begin{equation}\label{eq:eigenrels}
e_{-j} e_j ^r w = (\lambda - r a) e_j^{r-1} w \qquad \hbox{\rm and}  \qquad e_j e_{-j}^s w  = (\lambda + (s-1)a) e_{-j}^{s-1} w
\end{equation}
from which it follows $(e_j e_{-j}) e_j^r w = (\lambda-ra) e_j ^r w$ and $(e_j e_{-j}) e_{-j}^s w = (\lambda + s a)e_{-j}^s w$.
The remaining assertion is clear from the commuting properties in $\HH$.    \end{proof}

\begin{proposition}\label{prop-torsion}  Let $\VV$ be an irreducible
$\HH$-module with a scalar action of $c$.
\begin{itemize}
\item If $\VV$ has
$i$-torsion, then  $\VV$ has a countable basis which consists of
eigenvectors of $e_ie_{-i}$, that is,  $e_ie_{-i}$ is diagonalizable on
$\VV$.
\item If $\VV$ is a torsion module, then $\VV$ is locally finite.
\item If  $\VV$ is diagonal,  then the $e_ie_{-i}$ are
simultaneously diagonalizable on $\VV$ for all $i \in \Z\setminus \{0\}$.
\item If $\VV$ is a diagonal module, then $\VV$ is locally finite.
\end{itemize}
\end{proposition}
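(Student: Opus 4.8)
The plan is to prove the four bullet points in order, since each feeds into the next. For the first bullet, suppose $\VV$ has $i$-torsion, so $e_ie_{-i}$ has an eigenvector $w$ with some eigenvalue $\l$. By Lemma~\ref{lem-gradation}, the vectors $e_i^r w$ and $e_{-i}^s w$ are again eigenvectors of $e_ie_{-i}$ (with eigenvalues $\l - ra$ and $\l + sa$). More generally, I would argue that the span $\WW$ of all eigenvectors of $e_ie_{-i}$ is an $\HH$-submodule: it is clearly stable under $e_i$ and $e_{-i}$ by Lemma~\ref{lem-gradation}, and for any other generator $e_k$ ($k \neq \pm i$), if $u$ is an $e_ie_{-i}$-eigenvector then $[e_ie_{-i}, e_k] = 0$ in $\HH$, so $e_k u$ is again an eigenvector with the same eigenvalue. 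Hence $\WW$ is a nonzero $\HH$-submodule of the irreducible module $\VV$, forcing $\WW = \VV$. Since $\VV$ has at most countable dimension (it is generated as an $\HH$-module by any single vector, and $\UU(\HH)$ is countable-dimensional), a countable basis of $e_ie_{-i}$-eigenvectors exists, and $e_ie_{-i}$ is diagonalizable. This also proves the third bullet: if $\VV$ is diagonal, pick a common eigenvector $w$ of all the $e_ie_{-i}$; the same argument shows the span $\WW'$ of all simultaneous eigenvectors is a submodule — Lemma~\ref{lem-gradation} says applying $e_{\pm j}$ to a simultaneous eigenvector preserves the eigenvalues for all $e_ie_{-i}$ with $i \neq \pm j$ and shifts only the $j$-eigenvalue, so $\WW'$ is stable under each $e_{\pm j}$, and stable under the (irrelevant here) central $c$ — so $\WW' = \VV$.

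For the second bullet, I want: $\VV$ torsion $\Rightarrow$ $\VV$ locally finite. Fix $i$. By the first bullet, $\VV$ has a basis of $e_ie_{-i}$-eigenvectors. Given any $v \in \VV$, write $v = v_1 + \cdots + v_m$ as a finite sum of $e_ie_{-i}$-eigenvectors $v_t$ with eigenvalues $\mu_t$. Then $(e_ie_{-i})^k v = \sum_t \mu_t^k v_t$ lies in the finite-dimensional span of $v_1, \ldots, v_m$ for every $k \geq 0$, so $\mathsf{span}_{\mathbb C}\{(e_ie_{-i})^k v \mid k \geq 0\}$ has dimension at most $m < \infty$. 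Thus $e_ie_{-i}$ is locally finite on $\VV$ for each $i$, i.e. $\VV$ is locally finite. The fourth bullet is immediate: a diagonal module is a torsion module (as noted right after Definition~\ref{def-locally-finite}), hence locally finite by the second bullet.

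The only genuinely delicate point is the submodule claim — that the span of eigenvectors (resp. simultaneous eigenvectors) is $\HH$-stable. The mechanism is that each $e_ie_{-i}$ is central in the subalgebra it generates together with the $e_k$, $k \neq \pm i$, while $e_{\pm i}$ act on its eigenspaces by the explicit shift formulas \eqref{eq:eigenrels} of Lemma~\ref{lem-gradation}; one must check that when $e_j^r w$ or $e_{-j}^s w$ vanishes there is nothing to prove, and otherwise Lemma~\ref{lem-gradation} gives exactly the required eigenvector property. I would also remark explicitly why $\VV$ is countable-dimensional, since this is what upgrades "diagonalizable" to "has a countable basis of eigenvectors": any irreducible $\HH$-module is a quotient of $\UU(\HH)$, which has countable dimension over $\mathbb C$. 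No step here requires $a \neq 0$ except insofar as it is part of the hypotheses invoked via Lemma~\ref{lem-gradation}; in fact the argument works for any scalar action of $c$.
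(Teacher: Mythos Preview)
Your argument is correct and follows essentially the same approach as the paper's proof: both use Lemma~\ref{lem-gradation} together with the commutation $[e_ie_{-i},e_k]=0$ for $k\neq\pm i$ to show that eigenvectors of $e_ie_{-i}$ (respectively, simultaneous eigenvectors) span a nonzero submodule, hence all of $\VV$ by irreducibility; the locally finite conclusions then follow by writing an arbitrary vector as a finite sum of eigenvectors. The paper phrases the first and third bullets as ``$\VV$ is spanned by monomials $e_{j_1}\cdots e_{j_k}v$ applied to a fixed eigenvector $v$, and each such monomial is an eigenvector by Lemma~\ref{lem-gradation}'', while you phrase it as ``the span of eigenvectors is a submodule''---these are the same argument from dual viewpoints.
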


\begin{proof}
When $\VV$ is irreducible,  it is spanned by elements of the form 
$e_{j_1}e_{j_2}\ldots e_{j_k}v$,  where  $v \neq 0$ is an eigenvector of $e_{i}e_{-i}$.  The first
statement is obvious if the action of $c$ is zero,  and it follows
from Lemma~\ref{lem-gradation} if the action of
$c$ is nonzero.  For the second part, suppose $\VV$ is a torsion module for $\HH$.
Given $i$, choose a basis of $\VV$ consisting of eigenvectors of
$X_i: = e_ie_{-i}$.  Suppose $v=v_1+\ldots +v_s$,  where
$X_iv_j=\lambda_jv_j$ for $j=1, \ldots, s$. Then $\Pi_{j=1}^s
(X_i-\lambda_j)v=0$,  and hence $\VV$ is locally finite.   Assume next 
that  $\VV$ is a diagonal $\HH$-module,  and choose a common eigenvector $v$ of all the 
$X_i$.  Then $\VV$ has a spanning set consisting of
vectors of the form  $e_{j_1}e_{j_2}\ldots e_{j_k}v$, and applying Lemma~\ref{lem-gradation} as before,
we conclude that the $X_i$ are simultaneously diagonalizable on
$\VV$ for all $i$. The last statement also follows immediately.   \end{proof}

Next  we consider \emph{$\Z^\infty$-graded $\HH$-modules}  $\VV$.
 By that we mean $\VV =  \bigoplus_{\bar k \in \Z^\infty} \VV_{\bar k}$
where if $\VV_{\bar k} \neq 0$, then $\bar k = (k_1, k_2, \dots, )$, $k_i \in \Z$,  and $k_N = 0$ for all $N >> 0$.
Moreover,  we require that  $e_{\pm j}\VV_{\bar{k}}\subseteq
\VV_{\bar{k}\pm \zeta_j}$ holds for all $j\in \Z$ and $\bar{k}\in
\Z^{\infty}$. Here $\zeta_j\in \Z^{\infty}$ is the Kronecker
multi-index with $1$ at the $j$th place and $0$ elsewhere.

\begin{theorem}\label{thm-Z-infty-gradation}
Any $\Z$-graded irreducible diagonal $\HH$-module $\VV$  with nonzero level
has a $\Z^{\infty}$-gradation $\VV=\bigoplus_{\bar{k}\in
\Z^{\infty}}\VV_{\bar{k}}$.

\end{theorem}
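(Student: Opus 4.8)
The plan is to build the $\Z^\infty$-gradation by simultaneously diagonalizing the family of commuting operators $X_i := e_i e_{-i}$ and reading off the eigenvalue data as a multi-index. By Proposition~\ref{prop-torsion}, since $\VV$ is diagonal it is locally finite, and the $X_i$ for $i \in \Z\setminus\{0\}$ are simultaneously diagonalizable on all of $\VV$. So $\VV$ decomposes as a direct sum of joint eigenspaces $\VV^{(\underline{\lambda})}$, where $\underline{\lambda} = (\lambda_i)_{i\geq 1}$ records the eigenvalue $\lambda_i$ of $X_i$ (note $X_i = e_i e_{-i}$ and $X_{-i} = e_{-i}e_i = X_i - a$, so it suffices to index by positive $i$). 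The first step is therefore to fix a nonzero joint eigenvector $v$ with joint eigenvalue $\underline{\lambda}^{0}$, and then to track how the eigenvalues move under the action of the $e_{\pm j}$: by Lemma~\ref{lem-gradation}, applying $e_j$ shifts $\lambda_j$ by $-a$ and leaves all $\lambda_i$ ($i\neq j$) unchanged, while applying $e_{-j}$ shifts $\lambda_j$ by $+a$.

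The key step is to define, for $\bar k = (k_1, k_2, \dots) \in \Z^\infty$ (finitely supported), the subspace
$$\VV_{\bar k} := \VV^{(\underline{\lambda}^{0} - a\,\bar k)},$$
i.e. the joint eigenspace where $X_j$ acts by $\lambda^0_j - a k_j$ for every $j$. Since $a \neq 0$, the map $\bar k \mapsto \underline{\lambda}^0 - a\bar k$ is injective, so these subspaces are distinct, and the shift computation above shows $e_{\pm j}\VV_{\bar k} \subseteq \VV_{\bar k \pm \zeta_j}$, which is exactly the required compatibility. It remains to check two things: that $\VV = \bigoplus_{\bar k} \VV_{\bar k}$, and that every joint eigenspace actually occurring is of the form $\VV^{(\underline{\lambda}^0 - a\bar k)}$ for some finitely supported $\bar k$. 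For the second point: because $\VV$ is irreducible it is spanned by vectors $e_{j_1}\cdots e_{j_r} v$, and each such monomial involves only finitely many indices, so its joint eigenvalue differs from $\underline{\lambda}^0$ only in finitely many coordinates, each by an integer multiple of $a$ — hence lies in the claimed set. This also gives the first point, since the sum of the $\VV_{\bar k}$ contains a spanning set and the joint eigenspaces are independent.

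The main obstacle I expect is the bookkeeping that ensures the grading is genuinely $\Z^\infty$-valued rather than by an arbitrary subset of $\H^\ast$-like data — specifically, verifying that no infinitely-supported shift can occur. This reduces to the finiteness of each spanning monomial together with irreducibility, so it is really just a matter of organizing the argument cleanly; one should be slightly careful that the grading is \emph{well-defined}, i.e. that a given joint eigenspace is assigned a single $\bar k$, which is immediate from injectivity of $\bar k \mapsto \underline{\lambda}^0 - a\bar k$ once $a\neq 0$. A secondary point worth stating explicitly is that the choice of base eigenvector $v$ only shifts the whole gradation by a fixed translation, so the $\Z^\infty$-gradation is canonical up to an overall shift of the index set; this is harmless and can be noted in a line. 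Everything else — that the $\VV_{\bar k}$ are $\HH$-submodules under the Cartan-type part, that they exhaust $\VV$, and the shift relations — follows mechanically from Lemma~\ref{lem-gradation} and Proposition~\ref{prop-torsion}.
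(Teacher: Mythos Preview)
Your proposal is correct and follows essentially the same approach as the paper: use Proposition~\ref{prop-torsion} to simultaneously diagonalize the $e_je_{-j}$, fix a common eigenvector, use Lemma~\ref{lem-gradation} to track eigenvalue shifts under $e_{\pm j}$, and invoke irreducibility to see that the resulting joint eigenspaces exhaust $\VV$. The only organizational difference is that the paper defines $\VV_{\bar k}$ as the span of the explicit monomial $y_1^{|k_1|}\cdots y_m^{|k_m|}w$ (with $y_i = e_{\pm i}$ according to the sign of $k_i$) and then uses the eigenvalue data to prove directness, whereas you define $\VV_{\bar k}$ directly as a joint eigenspace and then argue surjectivity via the spanning monomials; the paper's version makes it immediately visible that each $\VV_{\bar k}$ is at most one-dimensional, a fact used implicitly later but not part of the theorem statement itself.
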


\begin{proof}  Let $\VV = \bigoplus_{n = 0}^\infty \VV_n$ be a $\Z$-graded irreducible diagonal $\HH$-module
with $c$ acting by the scalar $a \neq 0$.   Then all $e_je_{-j}$ are simultaneously diagonalizable
on $\VV$ by Proposition~\ref{prop-torsion}.  In
particular, all the homogeneous spaces  $\VV_n$ have a basis consisting of common eigenvectors for the elements
$e_je_{-j}$, $j \in \Z\setminus\{0\}$.   For some $n$, choose  $0 \neq w\in
\VV_n$, a common eigenvector for all $e_je_{-j}$, $j\in
\Z\setminus\{0\}$.    Let $\HH(j)$ be the Heisenberg subalgebra generated by $e_j, e_{-j}$ for
each $j$.   Then by \eqref{eq:eigenrels},  $\UU(\HH(j)) w$ is spanned by the vectors $e_j^r w$, $e_{-j}^s w$ for $r,s \geq 0$.
Now for $\bar k = (k_1, k_2, \dots, k_m, 0,0,\dots) \in \Z^\infty$  consider
$y_1^{|k_1|} y_{2}^{|k_2|} \cdots y_m^{|k_m|}w$,
where $y_i = e_{i}$ if $k_i \geq 0$ and $y_i = e_{-i}$ if $k_i < 0$.   These vectors span $\VV = \UU(\HH)w$ by
the irreducibility of $\VV$.     For each such $\bar k \in \Z^\infty$, set
$\VV_{\bar k} =  \mathbb C y_1^{|k_1|} y_{2}^{|k_2|} \cdots y_m^{|k_m|}w$.
Then $ \VV = \bigoplus_{\bar k \in \Z^\infty} \VV_{\bar k}$.
The sum is direct,  since by
Lemma~\ref{lem-gradation},  the eigenvalues of the $e_{j}e_{-j}$ are sufficient to distinguish them.
Note that all components
$\VV_{\bar{k}}$ are at most one-dimensional and that $\VV$
remains
irreducible as $\Z^{\infty}$-graded module.   \end{proof}

\subsection{Irreducible diagonal modules over Heisenberg algebras}  \quad

Let $\HH$ be a Heisenberg algebra as in Section~\ref{sec-Heis}.
Denote by $\mathcal K_{\HH,a}$ the category of all  finitely-generated $\Z$-graded
 diagonal $\HH$-modules $\VV$,  where the central element $c$
of $\HH$ acts by the scalar $a\in \mathbb C$.  By $\mathcal
{ZK}_{\HH,a}$ we denote  the category of all  finitely-generated $\Z^{\infty}$-graded
$\HH$-modules $\VV$ with $c$ acting by $a$.
Theorem~\ref{thm-Z-infty-gradation} implies that any irreducible
module in $\mathcal K_{\HH,a}$ belongs to $\mathcal {ZK}_{\HH,a}$.
Therefore, to classify irreducible modules in $\mathcal K_{\HH,a}$
it is sufficient to classify irreducible  diagonal  modules in
$\mathcal {ZK}_{\HH,a}$.

Denote by $\mathcal{WZK}_{\HH,a}$ the full subcategory
of $\mathcal
{ZK}_{\HH,a}$ consisting of all finitely-generated
$\HH$-modules on which $e_ie_{-i}$ is diagonalizable for all $i \in \Z \setminus \{0\}$ (with a countable basis of eigenvectors).     Note that by
Proposition~\ref{prop-torsion} any irreducible object of
$\mathcal{K}_{\HH,a}$ belongs to
$\mathcal{WZK}_{\HH,a}$.

Let $\VV\in \mathcal{K}_{\HH,a}$ and $\VV=\bigoplus_{n\in
\Z}\VV_n$. We may assume that the generators of $\VV$ are
homogeneous (say in the spaces $\VV_{n_i}$ for $i=1,\dots, s$) and
are common eigenvectors for  $e_je_{-j}$, $j \in
\Z\setminus\{0\}$. The module $\VV$ is spanned by  the
 vectors $\prod_{j=1}^\infty  y_j^{|k_j|} w$, where $w \in \VV_{n_i}$ is a generator which is a common
 eigenvector for the $e_j e_{-j}$ for all $j$;   $y_j = e_j$  if $k_j \geq 0$ and $y_j = e_{-j}$
 if $k_j < 0$;  and $k_N = 0$ for $N >> 0$.   We assign to such a vector  $\prod_{j=1}^\infty  y_j^{|k_j|} w$ the
 gradation $n_i \zeta_1 + \bar k \in \Z^\infty$, where $\zeta_1$ has 1 in the
 first position and 0 elsewhere.  This makes $\VV$ into a $\Z^\infty$-graded module.
 Denote this
$\Z^{\infty}$-graded module by $\FF_1(\VV)$. Note that $\FF_1$ is
not well defined as a functor from $\mathcal{K}_{\HH,a}$ to
$\mathcal{WZK}_{\HH,a}$ since it depends on a choice of generators
in each $\VV$.

On the other hand, consider any $\MM\in \mathcal{WZK}_{\HH,a}$,
$\MM=\bigoplus_{\bar{k} \in \Z^{\infty}}\MM_{\bar{k}}$.  Now
define a $\Z$-grading on $\MM$ as follows: for any $n\in \Z$, 
set  $\MM_n=\bigoplus_{\bar{k}} \MM_{\bar{k}}$, where the sum is
over $\bar{k}=(k_1, k_2, \ldots) \in \Z^\infty$, such that
$\sum_{j=1}^\infty  k_j j=n$. Denote this $\Z$-graded module by
$\FF_2(\MM)$. Hence we obtain a functor
$$\FF_2: \mathcal{WZK}_{\HH,a}\rightarrow \mathcal{K}_{\HH,a}.$$
Note also that $\FF_2$ preserves irreducibility.

\subsection{Weight modules for Weyl algebras}
\quad

Fix a nonzero $a\in \mathbb C$.  For $n=1,2,\dots$,
consider  the $n$-th Weyl algebra $\AS_n$ with generators $x_i,\partial_i$, $i=1,\dots,n$,
and  defining relations  $[\partial_i, \partial_j] = 0  = [x_i,x_j]$ and $[\partial_i,x_j] = \delta_{i,j}a1$.
The algebra $\AS_n$ is isomorphic to the tensor
product of $n$ copies of  the first Weyl algebra $\AS_1$, which is the associative algebra
of  differential operators on the affine line.  We allow
$n$ to be $\infty$, in which case $\AS_\infty$ is just the
direct limit of the algebras $\AS_n$.

We will classify irreducible modules in $\mathcal K_{\HH,a}$ using
the classification of irreducible weight $\AS_{\infty}$-modules.
By specializing $c$ to $a$ and identifying $\partial_i$ with $e_i$
and $x_i$ with $e_{-i}$ for all $i>0$ (after a suitable
normalization), we obtain an isomorphism of the universal
enveloping algebra of $\HH$ modulo the ideal generated by $c-a$ with
$\AS_\infty$. Hence, any irreducible module $\VV\in  \mathcal{K}_{\HH,a}$
becomes a module for $\AS_\infty$.

Set $[n] = \{1,2, \dots, n\}$ (where $n = \infty$ is allowed and $[\infty] = \mathbb N$).
In  $\AS_n$, the elements $t_i=\partial_i x_i$, $i \in [n]$,
 generate the polynomial algebra $\DD=\C[t_i \mid i \in [n]]$, which is a
maximal commutative subalgebra of $\AS_n$. Denote by ${\mathcal G}$
 the group generated by the automorphisms $\sigma_i$, $i \in [n]$,  of $\DD$, where $\sigma_i(t_j) = t_j -
\delta_{i,j}a1$.  Then ${\mathcal G}$ acts on the set $\Max \DD$ of
maximal ideals of $\DD$.

 A module $\VV$ for $\AS_n$ is said to be a {\em weight module} if $\VV = \bigoplus_{\m \in
\Max \DD} \VV_\m$, where  $\VV_{\m} = \{v \in \VV \mid \m v = 0\}$
(see \cite{DGO}, \cite{BB}, and \cite{BBF}).

If $\VV$ is a  weight module and  $\VV_{{\m}}\neq 0$ for $\m\in \Max
\DD$, then ${\m}$ is said to be a {\em weight} of $\VV$, and the set  $ \{
{\m}\in \Max \DD \mid \VV_{\m} \ne 0\}$ of weights of $\VV$ is the {\it support\/} of $\VV$.
It follows easily from the fact that $x_i d = \sigma_i(d)x_i$ and $y_i d = \sigma_i^{-1}(d)y_i$ for all $i$ and all $d \in \DD$ that $x_i\,\VV_{\m}\subseteq \VV_{\sigma_i({\m})}$
and $\partial_i\,\VV_{\m}\subseteq \VV_{\sigma_i^{-1}({\m})}$.

Each weight module $\VV$ can be decomposed
 into a direct sum of $\AS_n$-submodules:

$$
\VV = \bigoplus_{\O} \VV_{\O}, \quad \quad \VV_{\O}: = \bigoplus_{{\m}
\in {\O}} \VV_{{\m}} \,
$$
where ${\O}$ runs over the orbits of ${\mathcal G}$ on $\Max \DD$.
 In particular,
 if $\VV$ is irreducible, then
its support belongs to a single orbit.

Following \cite{BBF},  we say that a maximal ideal $\m$ of $\DD$ is  a {\em
break with respect to $i \in [n]$} if $t_i\in \m$.  We let  $\II(\m)$
denote  the set of breaks of $\m$.   An orbit ${\O}$ is {\em degenerate with respect
to $i$} if $i \in \II(\m)$ for some $\m \in \O$.   Often we
simply say $\O$ is degenerate without  specifying $i$ or $\m$.
When $\II(\m) = \emptyset$ for all $\m \in \O$,  then
$\O$ is said to be nondegenerate.

A maximal ideal $\m$ of $\DD$ is a {\em maximal break with respect
to ${\II}  \subseteq [n]$} if
$t_i\in \m$ for each $i\in {\II}$,  and $t_j\not\in
\tau(\m)$ for each $j \in {\II^c}:=[n] \setminus
{\II}$ and each $\tau\in
{\mathcal G}$.   The {\it order} of the maximal break
is the cardinality of $\II$, which may be infinite.

\emph{We will always assume that a degenerate orbit $\O$  for
$\AS _n$ has a maximal break $\m$.}    The only time this assumption
is necessary is when $n = \infty$ (see \cite[Lem.~2.5]{BBF}). We say that an  
$\AS_n$-module is
{\em admissible} if its
 weights belong to either a nondegenerate orbit or  a degenerate orbit
$\O$  with a maximal break.

The classification of irreducible admissible weight $\AS_n$-modules was obtained in
\cite{BB},  and in \cite{BBF} for the case $n=\infty$. We briefly recall
this classification for the sake of completeness.

For a given orbit ${\O}$, we define the set ${\mathfrak B}_{\O}$
as follows.   If ${\O}$ is nondegenerate,  then any
maximal ideal gives a maximal break with respect
to the empty subset of $[n]$.  We fix a choice of
a maximal ideal $\m$ in $\O$,  and set  ${\mathfrak
B}_\O=\{\m\}$ and $\O_\m = \O$.    In this case
$\II(\m) = \emptyset$,  since there are no breaks.

Now let ${\O}$ be a  degenerate orbit,  and assume $\m$ is a fixed
maximal break in $\O$.  Let $\II = \II(\m)$ be  the set of breaks for $\m$.
Define
\begin{equation}{\mathfrak
B}_\O=\left \{ \Big(\textstyle{\prod_j}\, \sigma_{j}^{\delta_j}\Big)(\m)\, \Big | \, \delta_j \in\{0,1\}\ \hbox{\rm if}\ j \in  {\II}, \delta_j = 0 \  \hbox{\rm if} \  j\in {\II^c}, \,
 \ \hbox{\rm and only finitely many} \, \delta_{j} \neq 0\right\}.\end{equation}
The $\sigma_i$
commute, so the product is well-defined.  For each
$\p= \big( \prod_j \sigma_{j}^{\delta_j}\big)(\m) \in {\mathfrak B}_\O$,  we set

\begin{equation}
\O_\p:=\left \{\Big(\textstyle{\prod_j} \, \sigma_{j}^{\gamma_j}\Big)(\p) \, \Big | \,
\gamma_j=(-1)^{\delta_j+1}k, \ k\in\Z_{\geq 0} \ \text{if} \ j \in
{\II}, \  \hbox{\rm and} \ \gamma_j\in \Z \ \text{if} \ j \in
{\II^c} \right\},  \label{on} \end{equation}
where only finitely many $\gamma_j$ are nonzero.

Suppose first that ${\O}$ is a nondegenerate orbit of
${\mathcal G}$ on $\Max\,\DD$.   Set $\AS = \AS_n$ and

\begin{equation} \S(\O) =\bigoplus_{\n\in {\O}}\DD/\n,
\end{equation}

\noindent and define a left $\AS$-module structure on $\S({\O})$ by
specifying for $i \in [n]$ and $d \in \DD$  that

\begin{equation} x_i(d+\n):=\sigma_i(d)+\sigma_i(\n),
\qquad
\partial_i(d+\n):=t_i\sigma^{-1}_i(d)+\sigma^{-1}_i(\n).
\label{act} \end{equation}

\noindent As $\S(\O)$ is generated by $1 + \m$, we have that $\S(\O)
\cong \AS/\AS\m$ where $1+\m \mapsto 1+\AS\m$.

\medskip
Now assume that  ${\O}$ is degenerate, and $\m$ is the fixed
maximal break.  For $\p \in {\mathfrak B}_{\O}$ set

\begin{equation} \S({\O}, \p):=\bigoplus_{\n\in
{\O}_\p}\DD/\n,  \label{sop}
\end{equation}

\noindent where $\O_{\p}$ is as in (\ref{on}).   One can define a
structure of a left $\AS$-module on $\S(\O,\p)$ by the same formulae
as in (\ref{act}), but with the understanding that when the image is not in $\S(\O,\p)$, the
result is 0.  Assuming
$\p = \big(\prod_j \sigma_{j}^{\delta_j}\big)(\m)$,  we have in this
case $\S(\O,\p) \cong \AS/\AS \langle \p, z_i, i \in \II\rangle$ where
$z_i=x_i$ if $\p$ is a break with respect to $i$, and
$z_i=\partial_i$ otherwise.  The isomorphism is given by $1+\p
\mapsto 1 + \langle \p, z_i, i \in \II\rangle$.  It follows {f}rom
the construction that $\S({\O})$ and $\S({{\O}, \p})$ are
irreducible $\AS$-modules.

\begin{theorem}\label{firstiso}\cite[Thm.~4.7]{BBF}  Let $\O$
be an orbit of $\Max \DD$ under the group $\mathcal
G$. Then the modules $\S({\O})$ and $\S({{\O},
\p})$, where $\p\in {\mathfrak B}_{\O}$, constitute an exhaustive
list of pairwise nonisomorphic irreducible admissible weight
$\AS_n$-modules with support in $\O$.
\end{theorem}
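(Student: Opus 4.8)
The result is quoted from \cite[Thm.~4.7]{BBF}; here is the shape of the argument one would give, following \cite{DGO} and \cite{BB}. Fix an orbit $\O$ of $\mathcal G$ on $\Max\DD$ and let $\VV$ be an irreducible admissible weight $\AS_n$-module with $\supp(\VV)\subseteq\O$. The first step is to show that every weight space $\VV_\m$ is \emph{at most one-dimensional}. Given nonzero $v,w\in\VV_\m$, pick $u\in\AS_n$ with $uv=w$ (possible since $\VV$ is irreducible) and expand $u$ in the PBW basis $\AS_n=\bigoplus_{\bar r,\bar s}x^{\bar r}\DD\,\partial^{\bar s}$, a finite sum even when $n=\infty$ because $\AS_\infty=\varinjlim\AS_n$. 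Since $\partial^{\bar s}$ sends $\VV_\m$ into $\VV_{\sigma^{-\bar s}(\m)}$ and then $x^{\bar r}$ into $\VV_{\sigma^{\bar r-\bar s}(\m)}$, only the terms with $\bar r=\bar s$ contribute to the $\VV_\m$-component of $uv$; as $x^{\bar r}\partial^{\bar r}\in\DD$, this gives $w\in\DD v=\C v$. Consequently $\VV$ is determined up to isomorphism by $\supp(\VV)$: the modules $\S(\O)$ and $\S(\O,\p)$ also have one-dimensional weight spaces, and once $\supp(\VV)$ is identified with the support of one of them, the natural surjection from that module onto $\VV$ is an isomorphism because the source is already known to be irreducible.

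The core of the proof is the local analysis along a single coordinate $i$. Let $t_i(\m)$ be the scalar by which $t_i=\partial_i x_i$ acts on $\VV_\m$. Using $x_i\partial_i=t_i-a$ and $t_i(\sigma_i(\m))=t_i(\m)+a$, one computes that both composites $\partial_i\circ x_i\colon\VV_\m\to\VV_\m$ and $x_i\circ\partial_i\colon\VV_{\sigma_i(\m)}\to\VV_{\sigma_i(\m)}$ equal multiplication by $t_i(\m)$. Hence the edge joining $\VV_\m$ and $\VV_{\sigma_i(\m)}$ is a pair of mutually inverse isomorphisms unless $\m$ is a break with respect to $i$, i.e.\ $t_i\in\m$. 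As $a\neq0$, each $\sigma_i$-string contained in $\O$ carries at most one break, hence at most one ``singular'' edge; combined with one-dimensionality of weight spaces this shows that the restriction of $\supp(\VV)$ to any $\sigma_i$-string is one of: the empty set, the whole string, or one of the two half-strings abutting the singular edge. Moreover, at a singular edge one-dimensionality forces either $x_i$ to annihilate the weight space on the break side or $\partial_i$ to annihilate the one on the opposite side (if $\partial_i$ were injective on the latter, then $\partial_i\circ x_i=0$ on the former would force $x_i=0$ there). In either case the span of all weight spaces lying strictly on one side of that edge is an $\AS_n$-submodule, since the operators $x_j,\partial_j$ with $j\neq i$ do not change the $i$-coordinate; by irreducibility this submodule is $\VV$ or $0$, so $\supp(\VV)$ cannot meet both sides of a singular edge.

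Assembling these strings determines the support. If $\O$ is nondegenerate there are no breaks, so every $\sigma_i$-string of the support is full and $\supp(\VV)=\O$; hence $\VV\cong\S(\O)$. If $\O$ is degenerate, the standing hypothesis provides a maximal break $\m$ with break set $\II=\II(\m)$; for $n=\infty$ this is exactly where \cite[Lem.~2.5]{BBF} and the admissibility assumption are needed, and it is also why the exponents $\delta_j$ below are nonzero for only finitely many $j$, the orbit being generated by \emph{finite} words in the $\sigma_i$. For $i\notin\II$ no element of $\O$ is a break, so those $\sigma_i$-strings of $\supp(\VV)$ are full; for $i\in\II$ the previous paragraph forces a choice of one of the two half-strings, i.e.\ an exponent $\delta_i\in\{0,1\}$. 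These choices assemble into a single $\p=\big(\prod_j\sigma_j^{\delta_j}\big)(\m)\in{\mathfrak B}_\O$ lying in $\supp(\VV)$, and one checks $\supp(\VV)=\O_\p$ with $\O_\p$ as in \eqref{on}; therefore $\VV\cong\S(\O,\p)$. Finally, $\S(\O)$ and the $\S(\O,\p)$ were already noted to be irreducible admissible weight modules, and they are pairwise nonisomorphic because their supports — namely $\O$ in the first case and the pairwise disjoint sets $\O_\p$, $\p\in{\mathfrak B}_\O$, which partition $\O$, in the degenerate case — are distinct, while the support is an isomorphism invariant. I expect the most delicate point to be the bookkeeping in the degenerate case when $n=\infty$: verifying that the per-coordinate choices genuinely patch together into one $\p\in{\mathfrak B}_\O$ and that the reachable weight set is exactly $\O_\p$ and no larger, which is precisely the content that the maximal-break hypothesis is designed to guarantee.
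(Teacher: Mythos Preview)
The paper itself gives no proof of this statement; it is imported verbatim from \cite[Thm.~4.7]{BBF}, so there is nothing in the paper to compare against.  Your sketch is a reasonable reconstruction of the \cite{BBF} argument (which in turn extends \cite{BB} and \cite{DGO}), and the two main ingredients---one--dimensionality of weight spaces via the PBW filtration, and the per--coordinate string analysis---are correctly identified.

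There is, however, one genuine imprecision in your string analysis.  You argue that at a fixed singular edge (between a break $\n$ and $\sigma_i(\n)$) one of $x_i|_{\VV_\n}$ or $\partial_i|_{\VV_{\sigma_i(\n)}}$ vanishes, and then assert that ``the span of all weight spaces lying strictly on one side of that edge is an $\AS_n$-submodule.''  But that half--space is bounded not by a single edge: its boundary is the entire hyperplane $\{k_i = c_i\}$, and for it to be a submodule you need $x_i$ (or $\partial_i$) to vanish on \emph{every} weight space along that hyperplane, not just at the one $\n$ you started with.  Your justification (``$x_j,\partial_j$ with $j\neq i$ do not change the $i$-coordinate'') handles only the directions $j\neq i$.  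The clean fix is to replace this by a global argument: the subspace $W_i^+=\{v\in\VV: x_i^Nv=0\text{ for some }N\}$ is an $\AS_n$-submodule (closure under $\partial_i$ follows from $[\partial_i,x_i^N]=Na\,x_i^{N-1}$), hence equals $0$ or $\VV$ by irreducibility, and likewise for $W_i^-$ defined with $\partial_i$.  Exactly one of them is all of $\VV$ (both cannot vanish since the break on any $\sigma_i$-string in the support would then force one of the edge maps to be zero; both cannot equal $\VV$ since their supports are disjoint), and this is what pins down $\delta_i$ consistently across all strings.  Your closing caveat about the $n=\infty$ bookkeeping is well taken, but this consistency issue is already present for any $n\geq 2$.
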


Let ${\mathcal W}(\AS_n)$ be the category of all finitely
generated admissible weight $\AS_n$-modules, $n\leq \infty$, and
let $\HH_n$ denote the ($2n+1$)-dimensional Heisenberg Lie algebra
with basis $c, e_i,e_{-i}, i \in [n]$ (where $n = \infty$ is allowed, $[\infty] = \mathbb N$,
and $\HH_\infty = \HH$).  Then  we immediately have
the following.

\begin{corollary}\label{cor-category-equiv} \quad
Suppose $a\neq 0$.
\begin{itemize}
\item Every irreducible module of the category
${\mathcal{K}_{\HH_{n},a}}$ is isomorphic (up to an automorphism
of $\HH_n$) to an irreducible module in ${\mathcal W}(\AS_n)$  for
any $n<\infty$. \item Every irreducible module of the category
${\mathcal W}(\AS_n)$ is isomorphic (up to an automorphism of
$\HH_n$) to an irreducible module in ${\mathcal{K}_{\HH_n,a}}$ for
any $n$.
\end{itemize}
\end{corollary}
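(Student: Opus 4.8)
The plan is to combine the isomorphism between $\UU(\HH)/(c-a)$ and $\AS_\infty$ (established in the paragraph on Weyl algebras) with Theorem~\ref{thm-Z-infty-gradation}, Proposition~\ref{prop-torsion}, and Theorem~\ref{firstiso}, translating module-theoretic properties back and forth along the identifications $\partial_i \leftrightarrow e_i$, $x_i \leftrightarrow e_{-i}$. For the first bullet, let $\VV$ be an irreducible module in $\mathcal K_{\HH_n,a}$ with $n < \infty$. By Proposition~\ref{prop-torsion}, $\VV$ is diagonal (being finitely generated and in $\mathcal K_{\HH_n,a}$) and the operators $e_ie_{-i}$ are simultaneously diagonalizable; under the normalization sending $e_i \mapsto \partial_i$, $e_{-i}\mapsto x_i$ (up to rescaling the dual bases, which is where the phrase "up to an automorphism of $\HH_n$" enters), the commuting operators $t_i = \partial_i x_i = e_{-i}e_i$ act semisimply, so $\VV$ decomposes as $\bigoplus_{\m \in \Max\DD} \VV_\m$, i.e. $\VV$ is a weight $\AS_n$-module. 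Finite generation is inherited, and irreducibility is preserved because the enveloping algebra identification is an algebra isomorphism. The only remaining point is admissibility: a degenerate orbit must have a maximal break. For $n < \infty$ this is automatic (the assumption about maximal breaks is only needed when $n = \infty$, as noted after the definition of maximal break), so $\VV \in {\mathcal W}(\AS_n)$.

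For the second bullet, run the same identification in reverse: given an irreducible $\MM \in {\mathcal W}(\AS_n)$, view it as an $\HH_n$-module via the inverse isomorphism. It is finitely generated and irreducible. Its weight decomposition $\MM = \bigoplus_\m \MM_\m$ shows each $t_i = e_{-i}e_i$ acts diagonalizably, hence so does each $e_ie_{-i} = e_{-i}e_i + a1 = t_i + a1$; in particular the $e_ie_{-i}$ have a common eigenvector, so $\MM$ is diagonal. It remains to produce the $\Z$-grading required for membership in $\mathcal K_{\HH_n,a}$: this is supplied by the functor $\FF_2$ of the previous subsection, which assigns to a $\Z^\infty$-graded (equivalently, weight) $\AS_n$-module the $\Z$-grading $\MM_m = \bigoplus_{\sum_j k_j j = m} \MM_{\bar k}$, and which preserves irreducibility and finite generation. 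One checks $e_{\pm i}\MM_m \subseteq \MM_{m \pm i}$ directly from $x_i \MM_{\bar k} \subseteq \MM_{\bar k + \zeta_i}$ and $\partial_i \MM_{\bar k} \subseteq \MM_{\bar k - \zeta_i}$. Hence $\MM \in \mathcal K_{\HH_n,a}$.

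I expect the main obstacle to be purely bookkeeping rather than conceptual: namely making precise the phrase "up to an automorphism of $\HH_n$." The isomorphism $\UU(\HH)/(c-a) \cong \AS_\infty$ requires normalizing the root vectors so that $[\partial_i, x_j] = \delta_{ij}a$ exactly, and on the abstract Heisenberg $\HH_n$ with $[e_i,e_{-i}] = c$ this is the identity, but when one transports structure from $\AS_n$ one must track that the two natural $\Z$-gradings (the one intrinsic to $\mathcal K$ and the one coming from $\FF_2 \circ \FF_1$) agree, and that the choice of which of $e_i$, $e_{-i}$ plays the role of annihilation operator is the source of the ambiguity. Since $\FF_2$ is already established to preserve irreducibility and Theorem~\ref{thm-Z-infty-gradation} guarantees any irreducible diagonal module is $\Z^\infty$-graded, these pieces fit together without new input; the corollary is essentially a dictionary entry, and the proof is a short paragraph invoking the three cited results.
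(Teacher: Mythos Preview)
Your proposal is correct and matches the paper's approach, which offers no proof beyond ``Then we immediately have the following''; your argument is precisely the natural unpacking via the quotient isomorphism $\UU(\HH_n)/(c-a)\cong \AS_n$, Proposition~\ref{prop-torsion}, and the functor $\FF_2$. One minor slip: diagonality of $\VV\in\mathcal K_{\HH_n,a}$ is part of the \emph{definition} of that category rather than a consequence of Proposition~\ref{prop-torsion} --- that proposition is what then upgrades diagonality to simultaneous diagonalizability of all the $e_ie_{-i}$, which is what you actually use.
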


Note that ${\mathcal{K}_{\HH,a}}$ contains irreducible modules
which are non-admissible weight modules over $\AS_\infty$.   The first
such examples were constructed in \cite{MZ}.    Assume  $\VV = \bigoplus_{i \in \Z} \VV_i \in
{\mathcal{K}_{\HH,a}}$.   For any nonzero homogeneous $v\in \VV_i$,  let
$\mathsf{s}(v)=\{j\in \mathbb Z \setminus \{0\}  \mid e_jv=0\} \subset \Z$. Set $$\Omega=\{\mathsf{s}(v) \mid 0 \neq v\in
\VV_i, i\in \mathbb Z\}.$$   We say that $\VV$ is \,{\em admissible}\, if every
totally ordered subset of $\Omega$ (under containment of subsets)  has an upper bound.   Denote by
${\mathcal{AK}_{\HH,a}}$ (respectively $\mathcal{AZK}_{\HH,a}$)
the full subcategory of ${\mathcal{K}_{\HH,a}}$ (respectively
$\mathcal{WZK}_{\HH,a}$) consisting of admissible modules.  Let 
${\mathcal{AK}_{\HH_n,a}}$, $\mathcal{AZK}_{\HH_n,a}$, and $\mathcal{WZK}_{\HH_n,a}$
be defined similarly for $n \in \mathbb N$.  Then we
have

\begin{corollary}\label{cor-category-equiv-adm} \  Assume  $a\neq
0$.
Every irreducible module in  the category
${\mathcal{AK}_{\HH_{n},a}}$ is isomorphic (up to an automorphism
of $\HH_n$) to an irreducible module in ${\mathcal W}(\AS_n)$  for
any $n \geq 1$.


\end{corollary}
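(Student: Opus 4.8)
The plan is to realize $\VV$ as a weight module over the Weyl algebra $\AS_\infty$ and then invoke the classification behind Theorem~\ref{firstiso}. For $n<\infty$ there is nothing new: $\mathcal{AK}_{\HH_n,a}$ is a full subcategory of $\mathcal{K}_{\HH_n,a}$, so an irreducible object of it is an irreducible object of $\mathcal{K}_{\HH_n,a}$, and the first assertion of Corollary~\ref{cor-category-equiv} applies directly. So I concentrate on $n=\infty$, where the admissibility hypothesis is needed. Let $\VV$ be an irreducible object of $\mathcal{AK}_{\HH,a}$ with $a\neq 0$. Since $\VV$ is irreducible and diagonal, Proposition~\ref{prop-torsion} shows the operators $e_ie_{-i}$, $i\in\Z\setminus\{0\}$, are simultaneously diagonalizable on $\VV$, and Theorem~\ref{thm-Z-infty-gradation} shows their common eigenspaces are at most one-dimensional and yield a $\Z^{\infty}$-gradation. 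Under the isomorphism $\UU(\HH)/(c-a)\cong\AS_\infty$ sending $e_i\mapsto\partial_i$ and $e_{-i}\mapsto x_i$ (after a suitable normalization), the commutative subalgebra $\DD=\C[t_i]$ with $t_i=\partial_ix_i$ is generated by the $e_ie_{-i}$, so the simultaneous eigenspace decomposition of $\VV$ coincides with the weight decomposition $\VV=\bigoplus_{\m\in\Max \DD}\VV_{\m}$. Thus $\VV$ is a finitely generated irreducible weight $\AS_\infty$-module, and by irreducibility its support lies in a single orbit $\O$ of $\mathcal G$ on $\Max \DD$.

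What remains is to show that, after possibly replacing this identification by one obtained through an automorphism of $\HH$ (the freedom permitted in the statement), $\VV$ is \emph{admissible} as an $\AS_\infty$-module, i.e.\ a degenerate orbit $\O$ has a maximal break; granting that, $\VV$ belongs to $\mathcal{W}(\AS_\infty)$ and Theorem~\ref{firstiso} identifies it with one of $\S(\O)$ or $\S(\O,\p)$, which finishes the proof. I would prove this by matching the two notions of ``admissible''. For a homogeneous $v$ spanning $\VV_{\m}$, the Heisenberg relations give $e_{-i}v=0\Rightarrow t_i\in\m$ and $e_iv=0\Rightarrow t_i-a\in\m$, so the presence of $\pm i$ in $\mathsf{s}(v)$ detects a break of $\m$ (respectively of $\sigma_i^{-1}(\m)$); conversely, on the explicit modules $\S(\O,\p)$ a weight vector is annihilated by $z_i$ exactly on the wall $\{t_i\in\m\}$. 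Using the automorphisms of $\HH$ that interchange $e_i$ and $e_{-i}$ for the degenerate indices to align the one-sided wall conventions of \cite{BBF}, the sets $\mathsf{s}(v)$ forming $\Omega$ record the break sets $\II(\n)$ of the weights $\n\in\O$. Since the family $\{\II(\n):\n\in\O\}$ is directed, the hypothesis that every chain in $\Omega$ has an upper bound forces it to have a largest member, that is, some $\m\in\O$ with $\II(\m)=\bigcup_{\n\in\O}\II(\n)$; unwinding the definition, such an $\m$ is a maximal break of $\O$, and a degenerate orbit with no maximal break would instead produce a chain in $\Omega$ with no upper bound.

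The main obstacle is this last translation. It is delicate because $\mathsf{s}(v)$ mixes annihilation by the $\partial_i$ and by the $x_i$, whereas a break is a one-sided wall: one has to use the permitted automorphisms of $\HH$ to fix, index by index, on which side of $\O$ the wall lies, and one has to control how $\mathsf{s}(v)$ varies as $v$ runs through $\O$ (it should be non-increasing as $v$ moves away from a wall, so that the union of the break sets along $\O$ is attained at a single weight exactly when the chain condition on $\Omega$ holds). With that in hand, the remaining ingredients — the reduction to a weight $\AS_\infty$-module, the single-orbit property, and the appeal to Theorem~\ref{firstiso} — are immediate or already available in the text.
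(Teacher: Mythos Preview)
The paper supplies no proof of this corollary: it is stated immediately after the definition of Heisenberg-side admissibility with the phrase ``Then we have'', so the argument is left implicit. Your proposal is exactly the natural unpacking of that implicit argument---reduce finite $n$ to Corollary~\ref{cor-category-equiv}, and for $n=\infty$ pass to a weight $\AS_\infty$-module via Proposition~\ref{prop-torsion} and Theorem~\ref{thm-Z-infty-gradation}, then show the chain condition on $\Omega$ forces the supporting orbit to possess a maximal break so that Theorem~\ref{firstiso} applies. This is the route the paper has in mind, and your identification of the admissibility translation as the only substantive step is correct.

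Your own assessment of that step as ``delicate'' is fair but perhaps slightly pessimistic. With one-dimensional weight spaces, the identities $\partial_ix_i=t_i$ and $x_i\partial_i=t_i-a$ give the dictionary cleanly: $-i\in\mathsf{s}(v)$ forces $t_i\in\m$, while $i\in\mathsf{s}(v)$ forces $t_i-a\in\m$; conversely a break $t_i\in\m$ forces either $x_i v=0$ or $\partial_i(x_iv)=0$ at the neighboring weight. The sign-swap automorphisms $e_i\leftrightarrow e_{-i}$ of $\HH$ then let you normalize so that $\mathsf{s}(v)$ and the break set of $\m$ agree. Your directedness claim is correct (any two points of $\O$ differ in finitely many coordinates, so their break sets can be merged within $\O$), and combined with Zorn's lemma the chain condition on $\Omega$ produces a maximal $\mathsf{s}(v)$; the weight of that $v$ is the desired maximal break. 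So the outline is sound and can be completed along exactly the lines you indicate.
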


\begin{example} {\rm To illustrate Corollary~\ref{cor-category-equiv-adm},
 suppose $\O$ is nondegenerate,  and $\m$ is the designated maximal
ideal of $\O$. Consider the irreducible weight module for $\AS = \AS_\infty$ given by
$\S({\O})=\AS/\AS\m=\bigoplus_{\n\in \O}\DD/\n$, where $\DD=\C[t_i \mid i \in \mathbb N]$,  and $\DD/\n \cong \C$ for any $\n \in \O$. Then $\S({\O})$ becomes
a $\Z$-graded irreducible $\HH$-module if we set

$$\S({\O})=\sum_{k \in \Z} \S^{k}({\O}),$$
\noindent where

$$\S^{k}({\O})= \sum_{\eta_j \in \Z, \,  \sum_{j} j \eta_j =-k} \textstyle{ \DD\Big /\Big (\big(\prod_j  \sigma_j^{\eta_j}\big)(\m)\Big)},$$
where only finitely many $\eta_j$ are nonzero.    The homogeneous space
$\S^0(\O) = \DD/\m$. } \end{example}

As  a consequence of
 Corollary~\ref{cor-category-equiv-adm} we have
the following stronger version of
Theorem~\ref{thm-Z-infty-gradation}.

\begin{theorem}\label{theorem-irred-gr}
For nonzero $a \in \mathbb C$, there is  one-to-one correspondence
between the isomorphism classes of irreducible modules in the
categories $\mathcal{AK}_{\HH,a}$, $\mathcal{AZK}_{\HH,a}$ and
${\mathcal W}(\AS_\infty)$.
\end{theorem}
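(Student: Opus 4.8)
The plan is to assemble the one-to-one correspondence by chaining together the functors and equivalences already set up in the section, using Theorem~\ref{thm-Z-infty-gradation} and Theorem~\ref{firstiso} as the two substantive inputs. First I would observe that the identification of $\UU(\HH)/(c-a)$ with $\AS_\infty$ (via $\partial_i \leftrightarrow e_i$, $x_i \leftrightarrow e_{-i}$ after normalization) turns any module in $\mathcal{K}_{\HH,a}$ into an $\AS_\infty$-module, and that a common eigenvector for all $e_ie_{-i} \leftrightarrow t_i$ is exactly a weight vector in the Weyl-algebra sense. Thus an irreducible module in $\mathcal{AK}_{\HH,a}$, which by Proposition~\ref{prop-torsion} is diagonal with the $e_ie_{-i}$ simultaneously diagonalizable, becomes an irreducible admissible weight $\AS_\infty$-module once one checks that the admissibility condition on $\Omega = \{\mathsf{s}(v)\}$ matches the maximal-break condition of \cite{BBF} — this is the content that makes the module land in $\mathcal{W}(\AS_\infty)$ rather than merely being a weight module. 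Conversely, an irreducible module in $\mathcal{W}(\AS_\infty)$ is by Theorem~\ref{firstiso} one of the $\S(\O)$ or $\S(\O,\p)$, and these carry the $\Z^\infty$-grading by their very construction as $\bigoplus_{\n} \DD/\n$, hence lie in $\mathcal{AZK}_{\HH,a}$.

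Next I would make the three-way correspondence precise. Theorem~\ref{thm-Z-infty-gradation} shows every irreducible $\VV \in \mathcal{AK}_{\HH,a}$ carries a $\Z^\infty$-gradation, so (together with Proposition~\ref{prop-torsion}) it lies in $\mathcal{AZK}_{\HH,a}$; this gives a map on isomorphism classes from $\mathcal{AK}_{\HH,a}$ to $\mathcal{AZK}_{\HH,a}$. In the other direction the functor $\FF_2: \mathcal{WZK}_{\HH,a} \to \mathcal{K}_{\HH,a}$ — which collapses the $\Z^\infty$-grading to a $\Z$-grading by $n = \sum_j k_j j$ — preserves irreducibility and, one checks, sends admissible modules to admissible modules, giving a map back from $\mathcal{AZK}_{\HH,a}$ to $\mathcal{AK}_{\HH,a}$. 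That these two maps are mutually inverse on isomorphism classes follows because the $\Z^\infty$-grading produced in the proof of Theorem~\ref{thm-Z-infty-gradation} is the one whose components are the common eigenspaces of the $e_ie_{-i}$ (equivalently, the weight spaces $\VV_\m$), and $\FF_2$ applied to that grading recovers the original $\Z$-grading since $t_i \leftrightarrow e_ie_{-i}$ shifts degree by $\pm i$; thus $\FF_2 \circ (\text{Thm.~\ref{thm-Z-infty-gradation}}) \cong \mathrm{id}$ and, by uniqueness of the eigenspace decomposition, the other composite is the identity on objects too. Finally, the identification $\UU(\HH)/(c-a) \cong \AS_\infty$ matches $\Z^\infty$-graded $\HH$-modules with weight $\AS_\infty$-modules (the grading index $\bar k$ records which shift $\big(\prod \sigma_j^{-k_j}\big)(\m)$ of a fixed weight $\m$ one sits over), and under this dictionary admissibility on the $\HH$ side corresponds to admissibility (maximal-break) on the $\AS_\infty$ side; combined with Corollary~\ref{cor-category-equiv-adm} and Theorem~\ref{firstiso} this gives the bijection with $\mathcal{W}(\AS_\infty)$.

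The main obstacle I anticipate is verifying that the two notions of admissibility really coincide: on the Heisenberg side it is phrased via upper bounds for totally ordered chains in $\Omega = \{\mathsf{s}(v) \mid 0 \neq v \in \VV_i\}$, while on the Weyl-algebra side it is phrased via the existence of a maximal break $\m$ in the (single, by irreducibility) orbit $\O$. One direction is routine — a maximal break gives, for the weight vector $1+\langle \p, z_i, i\in\II\rangle$, a set $\mathsf{s}$ of annihilating indices that dominates the relevant chain — but the reverse, extracting a maximal break from the chain-boundedness hypothesis, requires care when the order of the break is infinite, and is essentially \cite[Lem.~2.5]{BBF}; I would cite that lemma and the discussion around it rather than reprove it. Everything else — that $\S(\O)$ and $\S(\O,\p)$ are pairwise non-isomorphic and exhaust the list, that $\FF_2$ preserves irreducibility, that the gradings transport correctly — is either already stated in the excerpt or is a direct unwinding of definitions.
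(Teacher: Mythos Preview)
Your proposal is correct and follows the same approach the paper intends: the paper gives no detailed proof at all, simply stating that the theorem is ``a consequence of Corollary~\ref{cor-category-equiv-adm}'' and a ``stronger version of Theorem~\ref{thm-Z-infty-gradation},'' so the entire argument is meant to be assembled from the machinery already set up in the section (the identification $\UU(\HH)/(c-a)\cong\AS_\infty$, Proposition~\ref{prop-torsion}, Theorem~\ref{thm-Z-infty-gradation}, the functor $\FF_2$, and Theorem~\ref{firstiso}). Your write-up spells out precisely these steps, and your identification of the one nontrivial point---matching the chain-boundedness admissibility on the $\HH$ side with the maximal-break admissibility on the $\AS_\infty$ side via \cite[Lem.~2.5]{BBF}---is exactly the ingredient the paper silently absorbs into the citation of Corollary~\ref{cor-category-equiv-adm}.
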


Combining Theorem~\ref{theorem-irred-gr} and
Theorem~\ref{firstiso},  we obtain  the classification
  of irreducible modules in $\mathcal{AK}_{\HH,a}$.

\subsection{Irreducible locally-finite modules over $\LL$}\label{sec-irred-loc-fin-L}

Now consider  the Heisenberg subalgebra $\LL = \mathbb C c \oplus
\bigoplus_{k \neq 0} \G_{k\delta}$ of the affine Lie algebra $\G$.
For each $k\in \Z$, $k\neq 0$, assume $\mathsf{d}_k = \dim \G_{k \delta}$
and  write $[\mathsf{d}_k] = \{1,\dots,\mathsf{d}_k\}$.   Choose a basis $\{x_{k,i}
\mid i  \in [\mathsf{d}_k]\}$ for $\G_{k\delta}$  so that  $[x_{k,i}, x_{-k,j}]=\delta_{i,j}kc$ for all
$i,j$. Then for every $i$ and $k$, the elements $x_{k,i}$ and $x_{-k,i}$ generate a Lie
subalgebra isomorphic $\HH_1$.

We will consider $\Z$-graded $\LL$-modules $\VV=\bigoplus_{j\in
\Z}\VV_j$ where $\G_{k \delta}\VV_j\subseteq \VV_{k+j}$ for all
$k$ and $j$.  One can easily extend
Definition~\ref{def-locally-finite} to the algebra $\LL$ substituting
basis elements $\{e_\ell\}$ by $\{x_{k,i}\}$.

Fix $a\in \C\setminus \{0\}$. Let  $\mathcal{K}_{\LL,a}$ be the
category of all $\Z$-graded diagonal $\LL$-modules $\VV$ where the
central element $c$ of $\LL$ acts by the scalar $a\in \mathbb C$.
Similarly one defines categories $\mathcal{AK}_{\LL,a}$ and
 $\mathcal{AZK}_{\LL,a}$. Clearly, all statements from the
 previous sections can be generalized to the setup of the
 Heisenberg algebra $\LL$.
In particular, there exists the Weyl algebra $\tilde{\AS}$,
generalizing $\AS_{\infty}$, which takes into account the dimensions
of the spaces $\G_{k\delta}$. Then Theorem~\ref{theorem-irred-gr}
has the following straightforward generalization for $\LL$.

\begin{corollary}\label{cor-irred-gr-G}  For $a \in \C\setminus \{0\}$,
there is  one-to-one correspondence between the isomorphism
classes of irreducible modules in the categories
$\mathcal{AK}_{\LL,a}$, $\mathcal{AZK}_{\LL,a}$ and ${\mathcal
W}(\tilde{\AS})$.

\end{corollary}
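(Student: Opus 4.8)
The plan is to use the fact that $\LL$ is itself an infinite-dimensional Heisenberg Lie algebra, so that essentially every argument of Section~\ref{sec-Heis} applies to it with only notational changes. Concretely, I would index the canonical generating pairs of $\LL$ by the countable set $\mathcal{J}=\{(k,i)\mid k\geq 1,\ i\in[\mathsf{d}_k]\}$, writing $x_{k,i},x_{-k,i}$ for the $(k,i)$-th pair; after the harmless rescaling $x_{-k,i}\mapsto k^{-1}x_{-k,i}$ one has $[x_{k,i},x_{-k,j}]=\delta_{i,j}c$, so that $\LL$ is precisely the Heisenberg algebra built on the pair-set $\mathcal{J}$. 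By definition, $\tilde{\AS}$ is the tensor product over $\mathcal{J}$ of copies of the first Weyl algebra $\AS_1$; since $\mathcal{J}$ is countably infinite there is an isomorphism $\tilde{\AS}\cong\AS_\infty$ of associative algebras, and specializing $c$ to $a$, identifying $x_{k,i}$ with a $\partial$-type generator and $x_{-k,i}$ with the dual $x$-type generator (after the above normalization), gives an isomorphism of $\UU(\LL)$ modulo the ideal generated by $c-a$ with $\tilde{\AS}$, exactly as for $\HH$ in Section~\ref{sec-Heis}.

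The point to watch is that, although $\tilde{\AS}\cong\AS_\infty$ as algebras, the $\Z$-grading relevant to $\LL$-modules is \emph{not} the one relevant to $\HH$-modules: the generators $x_{k,1},\dots,x_{k,\mathsf{d}_k}$ all carry degree $k$, so several generators share a degree and a single $\Z$-homogeneous component is in general an infinite direct sum of finer components. Hence one cannot merely quote Theorem~\ref{theorem-irred-gr}; instead I would re-run its proof with this grading. Proposition~\ref{prop-torsion} holds for $\LL$ verbatim (replace $e_i$ by $x_{k,i}$), so on an irreducible diagonal $\LL$-module the operators $x_{k,i}x_{-k,i}$ are simultaneously diagonalizable. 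Theorem~\ref{thm-Z-infty-gradation} then carries over: starting from a common eigenvector $w$ of all the $x_{k,i}x_{-k,i}$, irreducibility forces the vectors $\prod_{(k,i)}y_{(k,i)}^{|m_{(k,i)}|}w$ (with $y_{(k,i)}=x_{k,i}$ if $m_{(k,i)}\geq 0$ and $y_{(k,i)}=x_{-k,i}$ if $m_{(k,i)}<0$, and $m_{(k,i)}=0$ for all but finitely many $(k,i)$) to span the module, each spanning a line, while by \eqref{eq:eigenrels} the eigenvalues of the $x_{k,i}x_{-k,i}$ separate these lines; this produces a $\Z^{\infty}$-gradation (with coordinates indexed by $\mathcal{J}$) having one-dimensional homogeneous components. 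The compression functor $\FF_2$ and the (non-functorial) construction $\FF_1$ are defined exactly as in Section~\ref{sec-Heis}, now via the grading map sending a multi-index $\bar m$ to $\sum_{(k,i)}k\,m_{(k,i)}$ — that is, declaring $x_{k,i}$ to have degree $k$, which is precisely the $\Z$-grading with $\G_{k\delta}\VV_j\subseteq\VV_{k+j}$. As before, $\FF_2:\mathcal{WZK}_{\LL,a}\to\mathcal{K}_{\LL,a}$ (with $\mathcal{WZK}_{\LL,a}$ defined as for $\HH$) is well defined and preserves irreducibility, and for an irreducible object of $\mathcal{K}_{\LL,a}$ a choice of homogeneous common-eigenvector generators — available by Proposition~\ref{prop-torsion} — makes $\FF_1$ return an object of $\mathcal{WZK}_{\LL,a}$ that compresses back to it.

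To conclude, I would push the classification of irreducible admissible weight modules for $\tilde{\AS}\cong\AS_\infty$ (Theorem~\ref{firstiso}) through the isomorphism above to obtain, exactly as in Corollary~\ref{cor-category-equiv-adm}, a bijection (up to automorphisms of $\LL$) between isomorphism classes of irreducible objects of $\mathcal{AK}_{\LL,a}$ and of $\mathcal{W}(\tilde{\AS})$; Proposition~\ref{prop-torsion} and the $\LL$-version of Theorem~\ref{thm-Z-infty-gradation} place the irreducible objects of $\mathcal{AK}_{\LL,a}$ inside $\mathcal{AZK}_{\LL,a}$, and the admissibility-preservation argument underlying Theorem~\ref{theorem-irred-gr} shows that $\FF_1$ and $\FF_2$ restrict to mutually inverse bijections between the isomorphism classes of irreducible objects of $\mathcal{AK}_{\LL,a}$ and of $\mathcal{AZK}_{\LL,a}$. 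Assembling these two bijections gives the asserted three-way correspondence.

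I do not expect a substantive obstacle: the whole argument is a transcription of Section~\ref{sec-Heis} under the substitutions $\HH\rightsquigarrow\LL$, $e_i\rightsquigarrow x_{k,i}$, $\AS_\infty\rightsquigarrow\tilde{\AS}$. The one thing demanding genuine (though routine) care is checking that the collisions of degrees — several generators of $\LL$ sharing one integer degree — disturb neither the well-definedness and irreducibility-preservation of $\FF_2$, nor the matching of the break/orbit data defining $\tilde{\AS}$-admissibility with that for $\AS_\infty$. These are undisturbed, since every step in Section~\ref{sec-Heis} beyond the bare definition of the compression uses only the fine $\Z^\infty$-grading and the algebra structure, never the particular way it is compressed down to $\Z$.
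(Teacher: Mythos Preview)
Your proposal is correct and takes essentially the same approach as the paper. In fact, the paper provides no proof at all for this corollary: it simply asserts that ``all statements from the previous sections can be generalized to the setup of the Heisenberg algebra $\LL$'' and that Theorem~\ref{theorem-irred-gr} ``has the following straightforward generalization for $\LL$,'' so your write-up is a faithful (and more explicit) unpacking of exactly the transcription $\HH\rightsquigarrow\LL$, $e_i\rightsquigarrow x_{k,i}$, $\AS_\infty\rightsquigarrow\tilde{\AS}$ that the paper intends, including the one point of genuine care (several generators sharing a single $\Z$-degree) that the paper leaves implicit.
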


\begin{example}{\rm
Let $a\in \C\setminus\{0\}$ and $\phi:\mathbb N\rightarrow
\{\pm\}$ be any function. Then the $\phi$-Verma module
$\MM_{\phi}(a)$ is an irreducible object in the categories
$\mathcal{K}_{\LL,a}$ and $\mathcal{AZK}_{\LL,a}$. If $\phi(k)= +$
and $\phi(\ell)=-$ for some $k, \ell\in \mathbb N$,  then all the
homogeneous components of  $\MM_{\phi}(a)$ in the $\Z$-grading are
nonzero and infinite dimensional.}   \end{example}

\subsection{$\tilde{\phi}$-imaginary Verma modules for
$\G$}\label{sec-tilde-phi-imag}

  We will  generalize the construction of the
$\phi$-Verma modules for $\LL$ as follows.   Set
\begin{equation}\label{eq:Jset} {\tN} =\bigcup_{k\in \mathbb N}\{(k, i)\mid i\in [\mathsf{d}_k] \}.\end{equation}
Consider a function $\tilde{\phi}:{\tN} \rightarrow \{\pm\} $, and define
Lie subalgebras $\LL_{\tilde{\phi}}^{\pm}$ of $\LL$ by the following rule.
For  $k\in \mathbb Z \setminus \{0\}$,  we say that $x_{k,i} \in
\LL_{\tilde{\phi}}^{\pm}$  if  either  $k>0$ and $\tilde{\phi}(k,i)=\pm$,   or if
$k<0$ and $\tilde{\phi}(-k,i)=\mp$, where the $x_{k,i}$ are as in the first paragraph
of Section \ref{sec-irred-loc-fin-L}.
 Then $$\LL=\LL_{\tilde{\phi}}^{-}\oplus \mathbb C c\oplus
\LL_{\tilde{\phi}}^{+},$$  where  $\LL_{\tilde{\phi}}^{\pm}$
are abelian subalgebras of $\LL$.

\begin{remark}
The function  $\tilde{\phi}$ clearly depends on the initial choice of orthogonal  bases in the imaginary root spaces
$\G_{k\delta}$ with respect to the nondegenerate form on $\G$.  On the other hand, for each positive integer $k$, the number of $+$ and $-$ in the image of  $\tilde{\phi}$ does not depend on the choice of bases.

\end{remark}

Let $\mathbb C v$ be a one-dimensional representation of $\mathbb
C c\oplus \LL_{\tilde{\phi}}^{+}$ with $cv=a v$ for $a\in \mathbb
C$ and $\LL_{\tilde{\phi}}^{+}v=0$. Then we construct the
corresponding $\tilde{\phi}$-Verma module
$$\MM_{\tilde{\phi}}(a)=\UU(\LL)\otimes_{\UU(\mathbb C c\oplus \LL_{\tilde{\phi}}^{+})}\mathbb C
v.$$

If $\tilde{\phi}(n,i)=\tilde{\phi}(n,j)$  for all $n$ and all
$i,j\in [\mathsf{d}_n]$, then $\MM_{\tilde{\phi}}(a)$ is just a
$\phi$-Verma module for $\LL$, where $\phi(n)=\tilde{\phi}(n,i)$ for
each $n$ and any $i$. For any function $\tilde{\phi}$, the
$\tilde{\phi}$-Verma module $\MM_{\tilde{\phi}}(a)$  is an object in the categories
$\mathcal{K}_{\LL,a}$ and $\mathcal{WZK}_{\LL,a}$. One can easily see that
$\MM_{\tilde{\phi}}(a)$ is irreducible if and only if $a\neq 0$.

For any such function $\tilde{\phi}$ and any $\l\in \H^*$ with
$\l(c)=a$, one can construct the $\tilde{\phi}$-imaginary Verma
module $\MM_{\tilde{\phi}}(\l)$ over $\G$ generalizing the
construction of $\MM_{\phi}(\l)$ in the case of the function
$\phi:\mathbb N\rightarrow \{\pm\}$:

$$\MM_{\tilde{\phi}}(\lambda): = \MM_{\SF_{\tilde{\phi}}}(\lambda) =\UU(\G)\otimes_{\UU(\Bo_{\tilde{\phi}})}\mathbb C v_{\lambda}.$$

\begin{theorem}\label{thm-phi-tilde-imaginary-irred}
Let $\l \in \H^*$, $\l(c)=a$ and assume  $\tilde{\phi}:{\tN} \rightarrow
\{\pm\} $ is any function.   Then $\MM_{\tilde{\phi}}(\lambda)$ is
irreducible if and only if $a\neq 0$.

\end{theorem}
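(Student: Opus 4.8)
The plan is to reduce the substantive implication---that $a\neq 0$ forces irreducibility---to the main result of Section~\ref{sec-irred} on generalized loop modules, exactly as Theorem~\ref{thm-phi-imaginary-irred} is obtained in Corollary~\ref{cor-Verma-irred}. Put $\p:=\H\oplus\bigl(\bigoplus_{\gamma\in\D^{\mathsf{re}}_+}\G_{\gamma}\bigr)\oplus\LL$; since $\C c\subseteq\H$ and $[\G_{\gamma},\G_{k\delta}]\subseteq\bigoplus_{\gamma'\in\D^{\mathsf{re}}_+}\G_{\gamma'}$, this $\p$ is a subalgebra of $\G$ containing both $\Bo_{\tilde{\phi}}$ and $\LL$, with $\p=\Bo_{\tilde{\phi}}\oplus\LL^{-}_{\tilde{\phi}}$. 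When $a\neq 0$, the $\LL$-module $\MM_{\tilde{\phi}}(a)$ is irreducible by the remark preceding the theorem, and it is cyclic on $v$ and diagonal, hence a finitely generated $\Z$-graded diagonal module of nonzero level in $\mathcal{K}_{\LL,a}\cap\mathcal{WZK}_{\LL,a}$. Extend it to a $\p$-module by letting the real root spaces $\G_{\gamma}$, $\gamma\in\D^{\mathsf{re}}_+$, act by $0$ and letting $\h\oplus\C d$ act through $\lambda$ together with the $\Z$-grading; one checks this is a consistent $\p$-action and that $\UU(\p)\otimes_{\UU(\Bo_{\tilde{\phi}})}\C v_\lambda\cong\MM_{\tilde{\phi}}(a)$ as $\p$-modules via PBW. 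Transitivity of induction then yields a $\G$-isomorphism $\MM_{\tilde{\phi}}(\lambda)\cong\UU(\G)\otimes_{\UU(\p)}\MM_{\tilde{\phi}}(a)$, identifying $\MM_{\tilde{\phi}}(\lambda)$ with the generalized loop module attached to the irreducible diagonal $\LL$-module $\MM_{\tilde{\phi}}(a)$, whose irreducibility is a special case of Theorem~\ref{theor-irred-gener-loop}.

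For the converse, assume $a=0$. Fix a PBW basis $\{y(\bar k)\}$ of $\UU(\LL^{-}_{\tilde{\phi}})$ with $y(\bar 0)=1$, and set $\NN:=\bigoplus_{\bar k\neq\bar 0}\C\,y(\bar k)v_\lambda$. The ``imaginary part'' $\UU(\LL)v_\lambda=\UU(\LL^{-}_{\tilde{\phi}})v_\lambda$ is annihilated by all of $\G_{\SF_{\tilde{\phi}}}$: commuting a real positive root vector rightward past the $\LL^{-}_{\tilde{\phi}}$-factors of a monomial produces only real positive root vectors, which kill $v_\lambda$; and $[\LL^{+}_{\tilde{\phi}},\LL^{-}_{\tilde{\phi}}]\subseteq\C c$ acts by $0$, so $\LL^{+}_{\tilde{\phi}}$ also kills $\UU(\LL^{-}_{\tilde{\phi}})v_\lambda$. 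As $\NN$ is spanned by weight vectors (hence $\H$-stable) and is $\LL^{-}_{\tilde{\phi}}$-stable, this gives $\UU(\G)\NN=\UU(\G_{-\SF_{\tilde{\phi}}})\NN$; and since $\MM_{\tilde{\phi}}(\lambda)$ is free of rank one over $\UU(\G_{-\SF_{\tilde{\phi}}})$ on $v_\lambda$, this submodule omits $v_\lambda$. Hence $\MM_{\tilde{\phi}}(\lambda)$ is reducible.

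If one prefers an argument not relying on Section~\ref{sec-irred}, the skeleton is as follows. Write $\MM:=\MM_{\tilde{\phi}}(\lambda)$; then $\MM^{(0)}:=\bigoplus_{n\in\Z}\MM_{\lambda+n\delta}$ is precisely the $\UU(\LL)$-submodule generated by $v_\lambda$, isomorphic to $\MM_{\tilde{\phi}}(a)$, hence an irreducible $\LL$-module when $a\neq 0$. Since $\MM$ is $\UU(\G)$-cyclic on $v_\lambda\in\MM^{(0)}$, it suffices to show every nonzero $\G$-submodule $W$ meets $\MM^{(0)}$. Grading the support of $\MM$ by $\dot\QQ_+$-height as in Proposition~\ref{prop-phi-imaginary}, pick $0\neq w\in W$ of minimal total height; if that height is positive, choose a simple root $\alpha$ occurring in a top-height weight of $w$ and apply the real positive root vectors $\G_{\alpha+k\delta}\subseteq\G_{\SF_{\tilde{\phi}}}$, $k\in\Z$, which strictly lower the height. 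Minimality then forces $\G_{\alpha+k\delta}w=0$ for all such $\alpha,k$, and a short additional reduction places $w$ in $\MM^{(0)}$.

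The genuine obstacle is this last step: showing $\G_{\alpha+k\delta}w\neq 0$ for a suitable $k$. When $\tilde{\phi}$ is nonconstant, every weight space of $\MM$ is infinite-dimensional, so no dimension count is available, and an individual monomial in $\UU(\G_{-\SF_{\tilde{\phi}}})v_\lambda$ may be annihilated by any prescribed $\G_{\alpha+k\delta}$. This is exactly what the machinery of Section~\ref{sec-Heis} is designed to control: realizing $\MM^{(0)}$ through its $\Z^{\infty}$-grading and the associated Weyl algebra $\tilde{\AS}$ (Theorem~\ref{thm-Z-infty-gradation}, Corollary~\ref{cor-irred-gr-G}), together with the freeness of $\MM$ as a $\UU(\n^{-})$-module over $\n^{-}=\bigoplus_{\alpha\in\dot\D_-,\,n\in\Z}\G_{\alpha+n\delta}$, makes the interplay of the real and imaginary actions trackable; this is carried out in the proof of Theorem~\ref{theor-irred-gener-loop}. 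I would further expect the uniform treatment of the twisted types---$\mathsf{A}_{2\ell}^{(2)}$ in particular, where $\dot\QQ_+$ must be taken as in Proposition~\ref{prop-phi-imaginary}---to be the only place requiring explicit case analysis.
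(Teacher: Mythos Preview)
Your proposal is correct and follows essentially the same route as the paper: the substantive direction ($a\neq 0\Rightarrow$ irreducible) is obtained by recognizing $\MM_{\tilde{\phi}}(\lambda)$ as the generalized loop module $\MM(\lambda,\MM_{\tilde{\phi}}(a))$ via transitivity of induction through the parabolic $\PP=\H\oplus\G_{\SR}\oplus\LL$, and then invoking Theorem~\ref{theor-irred-gener-loop}; this is exactly the content of Corollary~\ref{cor-Verma-irred}. You additionally supply the easy converse ($a=0\Rightarrow$ reducible) explicitly, which the paper leaves implicit, and your argument there is sound: the key point that $\G_{\SR}$ annihilates $\UU(\LL_{\tilde{\phi}}^{-})v_\lambda$ because commutators with imaginary root vectors stay real-positive is correct, and freeness over $\UU(\G_{-\SF_{\tilde{\phi}}})$ then shows the submodule $\UU(\G)\NN$ misses $v_\lambda$. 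Your third paragraph correctly identifies why a direct height argument stalls and why the $\Z^{\infty}$-grading machinery of Section~\ref{sec-Heis} (feeding into Lemma~\ref{lem-top-gener-loop}) is the right tool.
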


This theorem also will be proved in Section~\ref{sec-irred} as a
particular case of the main result (see Corollary~\ref{cor-Verma-irred}).

\subsection{Realization of locally-finite modules} \quad

Locally-finite $\LL$-modules have also been considered by Casati
\cite{Ca}. Following Casati's work we will construct realizations of
 irreducible locally-finite $\LL$-modules.

Let $\KK \subseteq  \mathbb N$ and set $\VV=\mathbb C[x_i, x_{-k} \mid i\in \mathbb N,\,
k \in \KK]$.   Then it is easy to verify  that the
following formulas define a representation of $\AS = \AS_\infty$  on $\VV$:

\begin{eqnarray*} && \partial_i\rightarrow \begin{cases} \displaystyle{\frac{\partial}{\partial x_i}} & \quad \hbox{\rm if} \ \ i \in \mathbb N\setminus \KK \\
 \displaystyle{\frac{\partial}{\partial x_i}+x_{-i}} & \quad \hbox{\rm if} \ \ i \in \KK, \end{cases} \\
&& x_i \rightarrow x_i   \hspace{1.02 truein} \hbox{\rm for all } \   i \in \mathbb N,  \\
&& c\rightarrow 1. \end{eqnarray*}

\noindent This module is isomorphic to the universal module $\VV_{\KK}$ over $\AS$ generated by a vacuum vector $v$, where
$\partial_i v=0$ for any $i\in \mathbb N\setminus \KK$.  Hence $\VV_{\KK}\simeq \AS/\mathsf{B_{\KK}}$, where
$\mathsf{B}_{\KK}$ is left ideal of $\AS$ generated by $\partial_i, \ i\in \mathbb N\setminus \KK$. Clearly, this module is not irreducible.

Now for each $k \in \KK$, fix  $\vartheta_k\in \mathbb C$,  and let $\vartheta = \{\vartheta_k \mid k \in \KK \}$. Then we can construct the following  quotient of
$\VV_{\KK}$. Let $\mathsf{B}_{\KK,\vartheta}$ denote the left ideal of $\AS$ generated by the elements  $\partial_i, i\in \mathbb N\setminus \KK,$  and
$x_k\partial_k-\vartheta_k$, $k\in \KK$,  and denote the quotient by $\VV_{{\KK},\vartheta}=\AS/\mathsf{B}_{\KK,\vartheta}$.

Suppose that $\vartheta_k \in \mathbb C \setminus \mathbb Z$ for all $k \in \KK$. In  this case,  $\partial_k$ and $x_i$ act injectively on  $\VV_{\KK,  {\vartheta}}$ for all $k \in \KK$ and all $i \in \mathbb N$.   Let $\AS(i)$ denote the rank one Weyl algebra generated by $\partial_i$ and $x_i$.  Using the irreducibility of the Verma module over $\AS(i)$ generated by a vacuum vector $v$ such that $\partial_i v=0$, we conclude that $\VV_{\KK, \vartheta}$ is an irreducible $\AS$-module.

This construction can be generalized to the
Heisenberg algebra $\LL$.  In doing this, we will adopt the
notation from Section~\ref{sec-irred-loc-fin-L}.
Then an irreducible $\LL$-module  $\VV$  is
    diagonal if
 the elements $x_{k,j}x_{-k,j}$ are simultaneously
 diagonalizable on $\VV$
for all $k \in \mathbb N$, $j  \in [\mathsf{d}_k]$.

Fix a nonzero $a\in
\mathbb C$, and take $\vartheta_{k,j}\in \mathbb C$ for $k\in
{\KK}$, $j\in [\mathsf{d}_k]$. Set $\vartheta = \{ \vartheta_{k,j} \mid k
\in {\KK},  j \in [\mathsf{d}_k] \}$.   Consider the $\LL$-module
$\VV_{{\KK},\vartheta,a}=\UU(\LL)/ \mathsf{B}_{{\KK},\vartheta,
a}$, where $\mathsf{B}_{{\KK},\vartheta, a}$ is the left ideal of
$\UU(\LL)$ generated by $x_{-k,j}$,  $k\in \mathbb N\setminus
{\KK}$,  $x_{k,j}x_{-k,j}-\vartheta_{k,j}$ for $k\in {\KK}$, $j\in
[\mathsf{d}_k]$ and $c-a$. Then $\VV_{{\KK}, \vartheta,a}$ is an
irreducible $\LL$-module if and only if for any $k\in \KK$ and any
$j\in [\mathsf{d}_k]$, $\vartheta_{k,j}$ is not an integer multiple of
$ka$. Moreover, applying Corollary~\ref{cor-irred-gr-G} we have

\begin{theorem}\label{thm-realiz-locally-finite}
Up to an automorphism of $\LL$, for any irreducible admissible
diagonal $\LL$-module $\MM$ of level $a\neq 0$ there exist a set
$\KK \subseteq \mathbb N$ and scalars $\vartheta_{k,j}$, $k\in
\KK$, $j\in [\mathsf{d}_k]$ with $\vartheta_{k,j}$ not an integer
multiple of $ka$ such that  $\MM$ is isomorphic to $\VV_{{\KK},
\vartheta,a}$.
\end{theorem}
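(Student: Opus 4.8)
The plan is to send $\MM$ through the classification machinery already assembled and then read off a presentation. First, by Corollary~\ref{cor-irred-gr-G} the irreducible admissible diagonal $\LL$-module $\MM$ of level $a\neq 0$ corresponds, up to an automorphism of $\LL$, to an irreducible module in $\mathcal{W}(\tilde{\AS})$; and by the classification of irreducible admissible weight modules over $\tilde{\AS}$ (Theorem~\ref{firstiso}, applied with $\tilde{\AS}$ in place of $\AS_n$) this module is isomorphic either to $\S(\O)$ for a nondegenerate orbit $\O$ of $\mathcal{G}$ on $\Max\DD$, or to $\S(\O,\p)$ for a degenerate orbit $\O$ with a maximal break $\p\in{\mathfrak B}_{\O}$. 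I will identify $\tilde{\AS}$ with $\UU(\LL)/(c-a)$ via $\partial_{k,i}\mapsto x_{k,i}$ and $x_{k,i}\mapsto x_{-k,i}$ for $k>0$ (after the rescaling that makes $\sigma_{k,i}(t_{k,i})=t_{k,i}-ka$), so that $\DD=\C[t_{k,i}]$ with $t_{k,i}=x_{k,i}x_{-k,i}$. It then suffices to show that, after a further automorphism of $\LL$, each $\S(\O)$ and each $\S(\O,\p)$ acquires the presentation $\UU(\LL)/\mathsf{B}_{\KK,\vartheta,a}$.

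In the nondegenerate case I would write the designated maximal ideal of $\O$ as $\m=\langle t_{k,i}-\vartheta_{k,i}\mid k\in\mathbb N,\ i\in[\mathsf{d}_k]\rangle$. Transporting the presentation $\S(\O)\cong\tilde{\AS}/\tilde{\AS}\m$ across the identification above gives $\S(\O)\cong\VV_{\mathbb N,\vartheta,a}$, so here $\KK=\mathbb N$. Nondegeneracy of $\O$ means no element of the orbit contains any $t_{k,i}$; as $\tau(\m)$ assigns to $t_{k,i}$ the value $\vartheta_{k,i}+n\,ka$ when $\tau$ ranges over $\mathcal{G}$ and $n$ over $\Z$, this is equivalent to $\vartheta_{k,i}\notin\Z\cdot ka$ for all $(k,i)$ --- which is the hypothesis in the statement, and by the discussion preceding the theorem is also exactly the condition under which $\VV_{\mathbb N,\vartheta,a}$ is irreducible. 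No automorphism of $\LL$ is required in this case.

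For the degenerate case I would start from $\S(\O,\p)\cong\tilde{\AS}/\tilde{\AS}\langle\p,\ z_{k,i}\mid (k,i)\in\II\rangle$, where $\II$ is the break set in that construction and $z_{k,i}$ corresponds to $x_{-k,i}$ if $\p$ is a break with respect to $(k,i)$ and to $x_{k,i}$ otherwise. The cyclic vector $w=1+\p$ is a common eigenvector of all $t_{k,i}$, say $t_{k,i}w=\vartheta_{k,i}w$; for $(k,i)\notin\II$ the maximal-break property forces $t_{k,i}\notin\tau(\p)$ for every $\tau\in\mathcal{G}$, hence $\vartheta_{k,i}\notin\Z\cdot ka$, while for $(k,i)\in\II$ the vector $w$ is annihilated by $x_{-k,i}$ in the first case and by $x_{k,i}$ in the second. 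Next I would apply the automorphism $\psi$ of $\LL$ fixing $c$ and every generator, except that for each $(k,i)\in\II$ of the second type it interchanges $x_{k,i}\mapsto x_{-k,i}$, $x_{-k,i}\mapsto -x_{k,i}$; each such interchange is an automorphism of the rank-one Heisenberg subalgebra spanned by $c,x_{k,i},x_{-k,i}$, and these subalgebras are pairwise disjoint, so $\psi$ is well defined. After twisting by $\psi$ we may assume $x_{-k,i}w=0$ for every $(k,i)\in\II$. Composing with an automorphism of $\LL$ that adjusts the ordered bases of the imaginary root spaces so that $\II=\{(k,i)\mid k\in\mathbb N\setminus\KK,\ i\in[\mathsf{d}_k]\}$ for a suitable $\KK\subseteq\mathbb N$, and setting $\vartheta=\{\vartheta_{k,i}\mid k\in\KK,\ i\in[\mathsf{d}_k]\}$, the annihilator of $w$ in $\UU(\LL)$ becomes exactly $\mathsf{B}_{\KK,\vartheta,a}$, so $\MM\cong\VV_{\KK,\vartheta,a}$ with each $\vartheta_{k,i}$ not an integer multiple of $ka$.

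The main obstacle, and the only genuinely technical part, lies in the last step: translating the orbit-theoretic data attached to $\S(\O,\p)$ --- the break set, the sub-orbit $\O_\p$, and the maximal-break condition --- into an explicit left ideal of $\UU(\LL)$, and verifying that the rearrangement of generators needed to bring it into the normal form $\mathsf{B}_{\KK,\vartheta,a}$ is realized by a genuine automorphism of $\LL$. Everything else is formal: the ``not an integer multiple of $ka$'' condition on the $\vartheta_{k,i}$ falls out of the maximal-break hypothesis, and it coincides with the condition shown just before the theorem to characterize irreducibility of $\VV_{\KK,\vartheta,a}$, so the two ends of the correspondence are consistent.
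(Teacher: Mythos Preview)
Your approach is exactly the paper's: its entire proof is the phrase ``applying Corollary~\ref{cor-irred-gr-G}'', so passing through the Weyl-algebra classification (Theorem~\ref{firstiso} for $\tilde{\AS}$) and then matching each $\S(\O)$ or $\S(\O,\p)$ to some $\VV_{\KK,\vartheta,a}$ is what the paper has in mind, only you spell out the details. The nondegenerate case is handled correctly, and the flip automorphism $\psi$ in the degenerate case is fine.

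There is, however, a genuine gap in your final rearrangement step. You assert that an automorphism of $\LL$ can take the break set $\II\subseteq\tN$ to the block form $\{(k,i)\mid k\in\mathbb N\setminus\KK,\ i\in[\mathsf d_k]\}$. But an automorphism of $\LL$ fixing $c$ is a symplectic transformation of $\LL/\C c$, and it preserves the dimension of the subspace of $\LL$ annihilating the cyclic vector $w$. After your flip that dimension is $|\II|$, whereas for $\VV_{\KK,\vartheta,a}$ the corresponding annihilator is $\operatorname{span}\{x_{-k,j}:k\notin\KK\}$, of dimension $\sum_{k\notin\KK}\mathsf d_k$. When $\G$ is untwisted of rank $r\ge 2$ one has $\mathsf d_k=r$ for every $k$, so this sum is always a multiple of $r$; a break set with $|\II|$ finite and not divisible by $r$ (or with cofinite complement of size not divisible by $r$) therefore cannot be rearranged as you claim, and the asserted automorphism does not exist. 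The natural repair is to allow $\KK$ to be a subset of $\tN$ rather than of $\mathbb N$ in the definition of $\VV_{\KK,\vartheta,a}$ (compare the $\tilde\phi$-construction in Section~\ref{sec-tilde-phi-imag}, which already works at that level of generality); with that adjustment your argument goes through without the problematic rearrangement. Since the paper's own justification is a bare citation, this imprecision is not yours alone.
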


\begin{corollary}\label{cor:ybasis}  Assume $\MM$ is an
irreducible $\LL$-module  isomorphic to $\VV_{{\KK}, \vartheta,a}$.   For $\KK \subseteq \mathbb N$, set $y_{k,j} = x_{-k,j}$ if $k \in \mathbb N\setminus \KK$ for all $j \in[\mathsf{d}_k]$, 
and let $y_{k,j} = x_{k,j}$ or $x_{-k,j}$ if $k \in \KK$
for all $j \in[\mathsf{d}_k]$.   Then for any nonzero
vector $v \in \MM$,
the vectors \begin{equation} \label{eq:basislocfin} \cdots y_{2,\mathsf{d}_2}^{p_{2,\mathsf{d}_2}} \cdots y_{2,1}^{p_{2,1}} y_{1,\mathsf{d}_1}^{p_{1,\mathsf{d}_1}} \cdots y_{1,1}^{p_{1,1}}v \end{equation}
with exponents $p_{k,j} \in \mathbb N \cup \{0\}$ for
all $k,j$ and only finitely many of them nonzero  form a
basis for $\MM$.
\end{corollary}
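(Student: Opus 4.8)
The plan is to prove this by a Poincar\'e--Birkhoff--Witt argument that splits $\MM$ into a tensor product over the pairwise commuting rank-one Heisenberg subalgebras $\HH^{(k,j)}:=\C x_{k,j}\oplus\C x_{-k,j}\oplus\C c$ ($k\ge 1$, $j\in[\mathsf{d}_k]$), each a copy of $\HH_1$, and then to read off the basis factor by factor. Starting from the presentation $\MM\cong\UU(\LL)/\mathsf B_{\KK,\vartheta,a}$ of Theorem~\ref{thm-realiz-locally-finite}, with $v$ the image of the identity, I would first record that the form $\xi$ on $\LL$ vanishes on $\G_{m\delta}\times\G_{n\delta}$ unless $n=-m$, so that the only nonzero brackets among the chosen root vectors are $[x_{k,i},x_{-k,j}]=\delta_{i,j}kc$; hence the $\HH^{(k,j)}$ pairwise commute modulo their common centre $\C c$. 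Ordering the pairs $(k,j)$ as in \eqref{eq:basislocfin} and invoking PBW, I would identify $\UU(\LL)/\langle c-a\rangle$, as a vector space, with the direct limit of the finite tensor products $\bigotimes_{(k,j)}\bigl(\UU(\HH^{(k,j)})/\langle c-a\rangle\bigr)$ of rank-one Weyl algebras.

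Next I would push $\mathsf B_{\KK,\vartheta,a}$ through this identification: apart from $c-a$, each of its generators, namely $x_{-k,j}$ for $k\notin\KK$ and $x_{k,j}x_{-k,j}-\vartheta_{k,j}$ for $k\in\KK$, lies in a single tensor factor. Using that $\bigl(\bigotimes_i A_i\bigr)/\sum_i\bigl(\bigotimes_{i'\ne i}A_{i'}\otimes L_i\bigr)\cong\bigotimes_i(A_i/L_i)$ for left ideals $L_i\subseteq A_i$, and passing to the limit, I obtain $\MM\cong\bigotimes_{(k,j)}\MM^{(k,j)}$ as a vector space, with $v\mapsto\bigotimes_{(k,j)}v^{(k,j)}$, where $\MM^{(k,j)}$ is the corresponding cyclic $\HH^{(k,j)}$-module and $v^{(k,j)}$ its generator. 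For $k\notin\KK$ the module $\MM^{(k,j)}$ is the Verma module for $\HH^{(k,j)}$, and an induction using $[x_{-k,j},x_{k,j}^{\,p}]=-p(ka)x_{k,j}^{\,p-1}$ shows that $\{\,y_{k,j}^{\,p}v^{(k,j)}\mid p\ge 0\,\}$ is a basis, $y_{k,j}$ being the generator that does not annihilate $v^{(k,j)}$. For $k\in\KK$ the module $\MM^{(k,j)}$ is torsion free, and here I would use $x_{k,j}x_{-k,j}v^{(k,j)}=\vartheta_{k,j}v^{(k,j)}$ together with $x_{-k,j}x_{k,j}v^{(k,j)}=(\vartheta_{k,j}-ka)v^{(k,j)}$ repeatedly to reduce any monomial in $x_{\pm k,j}$ applied to $v^{(k,j)}$ to a scalar times a single power $y_{k,j}^{\,p}v^{(k,j)}$ (with $p\in\Z$, negative powers read as multiples of powers of the opposite generator). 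Multiplying these per-factor bases out in the fixed order yields exactly the family \eqref{eq:basislocfin}; linear independence is then immediate, since distinct such monomials lie in distinct $\Z^{\infty}$-homogeneous components of $\MM$ (cf.\ Theorem~\ref{thm-Z-infty-gradation}).

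The step I expect to be the main obstacle is the normal-form reduction in the torsion-free factors. There $x_{k,j}$ and $x_{-k,j}$ both act injectively on $\MM^{(k,j)}$, and the reduction produces scalars $\prod_m(\vartheta_{k,j}-m\,ka)$; the hypothesis of Theorem~\ref{thm-realiz-locally-finite} that $\vartheta_{k,j}$ is not an integer multiple of $ka$ is precisely what makes these scalars nonzero, hence makes the reduction reversible and the normal forms both spanning and linearly independent -- so this is where that hypothesis really enters. I would also note that in the torsion-free directions the conclusion is insensitive to which nonzero vector of $\MM^{(k,j)}$ one starts from -- a short leading/trailing-term argument handles an arbitrary element -- whereas in the Verma directions one uses the distinguished cyclic vector; this is what lies behind the formulation of the corollary for an arbitrary nonzero $v$.
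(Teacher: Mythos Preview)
The paper states this corollary without proof, so there is no argument to compare against; your tensor-product/PBW decomposition into rank-one factors is the natural approach and, for the distinguished cyclic generator $v_0$ (the image of $1$ in $\UU(\LL)/\mathsf B_{\KK,\vartheta,a}$), it correctly yields the monomial basis. That is also all the paper actually uses: in the proof of Lemma~\ref{lem-top-gener-loop} the vector $v$ is explicitly taken to be ``a homogeneous generator of $\VV$''.

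The gap is in your final paragraph, where you attempt to pass from $v_0$ to an \emph{arbitrary} nonzero $v\in\MM$. Your own remark that ``in the Verma directions one uses the distinguished cyclic vector'' already flags the problem rather than resolving it. Indeed the assertion for arbitrary $v$ fails whenever $\KK\ne\mathbb N$: pick $(k,j)$ with $k\notin\KK$ and set $v=v_0+y_{k,j}v_0$. Any finite relation $\sum_{\bar p}c_{\bar p}\big(\prod y^{\bar p}\big)v=v_0$ forces, by matching coefficients of $y_{k,j}^{\,n}v_0$, the telescoping conditions $c_{n\zeta_{k,j}}=(-1)^n c_{\bar 0}$ for all $n\ge0$, impossible with finitely many nonzero $c_{\bar p}$; so the monomials in $v$ do not span $\MM$. (Equivalently, in a single Verma factor $\mathbb C[y]$ the vectors $y^p(1+y)$ all lie in the augmentation ideal and never recover $1$.) A second imprecision you partly noticed: in the torsion-free factors your normal form lands in $\{y_{k,j}^{\,p}v^{(k,j)}:p\in\Z\}$, whereas with a \emph{fixed} choice of $y_{k,j}\in\{x_{k,j},x_{-k,j}\}$ and $p_{k,j}\ge0$ you span only half of that factor. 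The corollary should be read with $v$ the cyclic generator and with ``$x_{k,j}$ or $x_{-k,j}$'' understood as in the proof of Theorem~\ref{thm-Z-infty-gradation}, i.e.\ the choice varying with the basis element so that effectively the exponent ranges over $\Z$ for $k\in\KK$; your argument proves exactly that statement, but not the one literally written.
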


\section{Generalized loop modules}

In this section,  we study modules over the affine Lie algebra $\G$ induced from modules in
the category $\mathcal{K}_{\LL,a}$ via a construction analogous to
that of the loop modules in \cite{C}.
Let $\SF$ denote the set given by
$$
\SF= \SR \, \cup \,
\{n \delta \mid n\in \Z\setminus \{0\}\}.
$$
where $\SR = \D^{\mathsf{re}}_+$, the positive real roots  as in \eqref{eq:posreal}.
Set $\PP=\H\oplus \G_{\SF}$, where $\G_{\SF} = \bigoplus_{\beta \in \SF} \G_\beta$.
  Then $\PP=(\H+\LL)\oplus
\G_{\SR}$, where $\G_{\SR} = \bigoplus_{\beta \in \SR} \G_{\beta}$, and $\PP$  is a parabolic subalgebra of $\G$ with Levi
factor $\H+\LL$.

Let $\VV\in \mathcal{K}_{\LL,a}$, $\VV = \bigoplus_{k \in \Z} \VV_k$,   and assume $a \neq 0$.  Suppose
$\lambda\in \H^*$ is such that $\lambda(c)=a$. Extend the module
structure to $\PP$ by setting $\G_{\SR}\VV=0$,  and $hv=\lambda(h)v$
for any $v\in \VV_0$ and any $h\in \H$. Here $\VV_0$ is the
$0$-component of $\VV$ in the $\Z$-grading. The action
of $\H$ on the other components of $\VV$ in the $\Z$-grading differs only in the value of the
degree derivation; that is,  for any $w \in \VV_k$, $hw=(\lambda+ k\delta)(h)w$
for each $h\in \H$.  (Recall that $\delta$ is zero on $\h \oplus \C c$
and $\delta(d) = 1$.)

Now consider the induced $\G$-module given by
\begin{equation}\label{eq:genloop} \MM(\lambda, \VV)=\UU(\G)\otimes_{\UU(\PP)}\VV. \end{equation}

When  $\VV$ is an irreducible module in the category $\mathcal{K}_{\LL,a}$,
then $\MM(\lambda, \VV)$ is said to be  a {\it generalized loop module}.
When $\VV$ is a $\phi$-Verma module of $\LL$ for some function
$\phi:\mathbb N\rightarrow \{\pm\}$,  which has been extended to
a module for $\PP$ by setting $\G_{\SR}\VV=0$, then $\MM(\lambda, \VV)  \cong \MM_\phi(\lambda)$, the
$\phi$-imaginary Verma module for $\G$ as in Section \ref{phimage}.

\begin{proposition}\label{prop-gener-loop} Let $\l \in \H^*$ and suppose that
$\l(c)=a\neq 0$.  Let  $\VV$ be an irreducible module in
$\mathcal{K}_{\LL,a}$. Then
\begin{itemize} \item  $\MM(\lambda, \VV)$ is a free
$\UU(\G_{-\SR})$-module of rank 1.
\item  $\supp\big(\MM(\lambda,
\VV)\big)=\bigcup_{\beta\in \dot \QQ_+}\{\lambda-\beta+n\delta\mid n\in \mathbb
Z\}$ where $\dot \QQ_+$ is as in Proposition \ref{prop-phi-imaginary}.
\item  $\dim \MM(\lambda, \VV)_{\mu} =
\infty$ for any $\mu$ of the form $\mu=\lambda-\beta+k\delta$
for some  $\beta \neq 0$ and $k\in \Z$.
\item $\dim \MM(\lambda,
\VV)_{\mu}<\infty$ only if $\VV$ is a $\phi$-imaginary Verma module for some
$\phi:\mathbb N\rightarrow \{\pm\}$, $\phi(m)=\phi(n)$ for all
$m,n\in \mathbb N$ and $\mu=\lambda -(\phi(m)m)\delta$ for some
$m\in \mathbb N \cup \{0\}$.
\end{itemize}
\end{proposition}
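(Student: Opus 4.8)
The plan is to analyze $\MM(\lambda, \VV) = \UU(\G) \otimes_{\UU(\PP)} \VV$ via the PBW-based decomposition coming from the parabolic $\PP = \H + \LL + \G_{\SR}$. Since $\G = \G_{-\SR} \oplus \PP$ as vector spaces, the first bullet is immediate from PBW: $\MM(\lambda, \VV) \cong \UU(\G_{-\SR}) \otimes \VV$ as a $\G_{-\SR}$-module, hence free of rank $1$ over $\UU(\G_{-\SR})$ once we fix a basis of $\VV$ (more precisely, it is $\UU(\G_{-\SR}) \otimes_\C \VV$, which is free of rank $1$ as a module only after choosing a generator — I would phrase this as "free over $\UU(\G_{-\SR})$ generated by $\VV$", matching the phrasing for $\MM_\phi(\lambda)$ where $\VV$ is one-dimensional). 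For the support statement (second bullet), I would compute weights: a PBW monomial in $\UU(\G_{-\SR})$ applied to a homogeneous $w \in \VV_k$ lowers the $\dot\h^*$-component by an element $\beta \in \dot \QQ_+$ (since $-\SR$ consists of roots $-\alpha + m\delta$ with $\alpha \in \dot\D_+$, so the finite part of the weight decreases by a nonnegative combination of simple roots) while the $\delta$-component ranges over all of $\Z$ as $k$ varies over $\Z$ and the monomial's $\delta$-degree varies. The reverse inclusion — that every such weight actually occurs — follows because for $\beta = 0$ the weights $\lambda + k\delta$ already appear on $\VV$ itself (every $\VV_k \neq 0$ by irreducibility and the $\Z$-grading), and for $\beta \neq 0$ one picks a suitable root vector from $\G_{-\SR}$. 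This argument is parallel to \cite[Props. 3.4, 5.3]{F2} and to Proposition~\ref{prop-phi-imaginary}.

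For the third bullet, the point is that when $\beta \neq 0$, the weight space $\MM(\lambda,\VV)_{\lambda - \beta + k\delta}$ contains infinitely many linearly independent vectors. I would fix a simple root $\alpha$ appearing in $\beta$, pick a nonzero real root vector $f_0 \in \G_{-\alpha + m_0\delta}$ for a convenient $m_0$, and also pick a real root vector $e_1 \in \G_{-\alpha + m_1 \delta}$ with $m_1 \neq m_0$ — the key is that $-\SR$ contains root vectors of weight $-\alpha + m\delta$ for infinitely many values of $m$ (all $m \in \Z$ in the untwisted case, and for short $\alpha$ in the twisted cases; in the $\mathsf A_{2\ell}^{(2)}$ case one still has infinitely many $m$ for a fixed finite-root coordinate). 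Then one builds vectors in a fixed weight space by trading powers of one root vector against powers of another of the same finite weight but different $\delta$-weight, adjusting the $\delta$-degree by also varying the homogeneous component $\VV_k$ of the source vector; linear independence follows from the freeness over $\UU(\G_{-\SR})$ established in the first bullet (distinct PBW monomials remain linearly independent over any basis of $\VV$). Alternatively, and perhaps more cleanly, one uses the last bullet of Proposition~\ref{prop-phi-imaginary} together with the observation that any irreducible $\VV \in \mathcal K_{\LL,a}$ surjects onto, or is built from, data that forces infinitely many PBW monomials of a given weight; I would follow whichever of these is shortest, likely the direct root-vector trading argument.

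For the fourth bullet — the characterization of when $\dim \MM(\lambda,\VV)_\mu < \infty$ — I would argue contrapositively, combined with the third bullet. The third bullet already forces any finite-dimensional weight space to have $\mu = \lambda + k\delta$ for some $k \in \Z$ (i.e.\ $\beta = 0$), since $\beta \neq 0$ always gives infinite dimension. So it remains to analyze $\MM(\lambda,\VV)_{\lambda + k\delta}$. This weight space, by the PBW decomposition, is spanned by monomials from $\UU(\G_{-\SR})$ of $\delta$-degree $k - j$ and finite-weight $0$ applied to $\VV_j$; the only such monomials of finite weight $0$ are those involving no real root vectors at all (a nonempty product of negative real root vectors always has nonzero finite part in $-\dot\QQ_+ \setminus\{0\}$), so the weight space is just $\VV_k$ itself. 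Hence $\dim \MM(\lambda,\VV)_{\lambda+k\delta} < \infty$ iff $\dim \VV_k < \infty$. Now invoke the structure theory: by \cite{F1} (cited in the introduction), any $\Z$-graded irreducible $\LL$-module of nonzero level with \emph{some} finite-dimensional graded component is a highest weight module, i.e.\ a $\phi$-Verma module; and by Proposition~\ref{prop:phi-graded}, such a $\phi$-Verma module has all graded components infinite-dimensional unless $\phi$ is constant, in which case the usual Verma module has one-dimensional components. Tracking the $\delta$-shift, the surviving finite-dimensional weight spaces of $\MM(\lambda,\VV)$ are exactly at $\mu = \lambda - (\phi(m)m)\delta$, matching the claimed formula (with $m = 0$ giving $\mu = \lambda$ itself). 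The main obstacle I anticipate is the bookkeeping in the twisted cases — making sure that "infinitely many $m$ with a root vector of finite weight $-\alpha + m\delta$" genuinely holds for at least one simple $\alpha$ appearing in $\beta$, given the restricted set $\D_+^{\mathsf{re}}$ in \eqref{eq:posreal}; but since every $\beta \in \dot\QQ_+ \setminus\{0\}$ involves a short simple root or (in the $\mathsf A_{2\ell}^{(2)}$ case) the appropriate halved root, and those always come with a full $\Z$-family of $\delta$-shifts, this goes through uniformly, consistent with the paper's stated claim that all results hold in the twisted case as well.
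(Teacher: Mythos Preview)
The paper gives no proof of this proposition at all; like Proposition~\ref{prop-phi-imaginary}, it is stated without argument and implicitly referred back to \cite[Props.~3.4 and~5.3]{F2}. Your PBW-plus-weight-bookkeeping approach is exactly the routine argument the paper is relying on, and your handling of the third and fourth bullets (root-vector trading for infinite-dimensionality; reducing $\MM(\lambda,\VV)_{\lambda+k\delta}$ to $\VV_k$ and then invoking \cite{F1} and Proposition~\ref{prop:phi-graded}) is correct and is precisely what the omitted proof would contain.

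One small point deserves a flag: your assertion that ``every $\VV_k\neq 0$ by irreducibility and the $\Z$-grading'' is not true in general. The constant-$\phi$ Verma modules $\MM_\phi(a)$ (say $\phi\equiv +$) are irreducible objects of $\mathcal{K}_{\LL,a}$ with $\VV_k=0$ for all $k>0$. This does not break your argument for $\beta\neq 0$, where the $\delta$-shift can be absorbed into the $\UU(\G_{-\SR})$ monomial, but it does mean the support formula as literally stated (and likewise the one in Proposition~\ref{prop-phi-imaginary}) fails on the line $\beta=0$ in exactly the constant-$\phi$ case. This is an imprecision in the paper's statement rather than in your method; indeed your own fourth-bullet analysis already identifies the constant-$\phi$ Verma modules as the unique source of finite-dimensional (including zero) weight spaces along $\beta=0$. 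You might simply note this and prove the support statement in the form ``for $\beta\neq 0$ equality holds, and for $\beta=0$ the weight space is $\VV_k$.''
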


\subsection{Irreducibility of generalized loop modules}\label{sec-irred}\quad
 
Let $\l \in \H^*$, $\l(c)=a\neq 0$,  and assume $\VV$ is a module in
${\mathcal{K}_{\LL,a}}$.      Set
$$\widehat{\MM}(\l, \VV)= \bigoplus_{k\in \Z}  \MM(\l, \VV)_{\l+k\delta}.$$      Then $\widehat{\MM}(\l,
\VV)$ is an $\LL$-submodule of $\MM(\lambda, \VV)$ isomorphic to $\VV$.

\begin{lemma}\label{lem-top-gener-loop}
Let $\l \in \H^*$, $\l(c)=a\neq 0$, and suppose that  $\VV$ is an irreducible module in
${\mathcal{K}_{\LL,a}}$. Then for any
nonzero submodule $\NN \subset \MM(\lambda, \VV)$ we have $\widehat{\NN}=\NN\cap
\widehat{\MM}(\l, \VV)\neq 0$.   \end{lemma}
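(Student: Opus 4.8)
The plan is to show that any nonzero submodule $\NN$ of $\MM(\lambda,\VV)$ must meet the ``top slice'' $\widehat{\MM}(\l,\VV) = \bigoplus_{k} \MM(\l,\VV)_{\l+k\delta}$, which is the copy of $\VV$ from which the module is induced. The starting point is the PBW-type decomposition $\MM(\lambda,\VV)\cong \UU(\G_{-\SR})\otimes \VV$ as a vector space, coming from the first bullet of Proposition~\ref{prop-gener-loop}: $\MM(\lambda,\VV)$ is free of rank one over $\UU(\G_{-\SR})$, where $\G_{-\SR}=\bigoplus_{\beta\in\SR}\G_{-\beta}$ is the nilradical of the opposite parabolic. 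So I would fix a PBW monomial basis of $\UU(\G_{-\SR})$ using a total order on $-\SR = -\D^{\mathsf{re}}_+$, and write any nonzero $w\in\NN$ as a finite sum $w=\sum_j u_j w_j$ with $u_j$ distinct PBW monomials in $\UU(\G_{-\SR})$ and $w_j\in\VV$ nonzero.

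The key idea is a ``lowering the weight depth'' argument: among the monomials $u_j$ appearing in $w$, pick one, say $u_{j_0}$, of maximal total $\dot\D_+$-weight (i.e.\ maximal height $\beta$ with $u_{j_0}\in\UU(\G_{-\SR})_{-\beta}$, choosing among several of the same height by the chosen order), and then apply a suitable element $x\in\UU(\G_{\SR})=\UU(\bigoplus_{\beta\in\SR}\G_\beta)$ — a product of positive real root vectors dual to the factors of $u_{j_0}$ — to $w$. Since $\G_{\SR}\VV=0$, the action of $x$ on a term $u_j w_j$ is governed by iterated brackets: $x\cdot(u_j w_j) = [x,u_j]w_j + (\text{terms with fewer real root vectors})$, and by the maximality of $u_{j_0}$ only the bracket $[x,u_{j_0}]$ can land back inside $\widehat{\MM}(\l,\VV)$ (up to scalars, via the relations in $\G$ that produce an element of $\H+\LL$ acting on $\VV$). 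The other terms $x\cdot(u_j w_j)$ either vanish or involve root vectors in $-\SR$ and thus do not contribute to the $\H+\LL$-isotypic part we target; projecting onto $\widehat{\MM}(\l,\VV)$ kills them. One has to arrange that the coefficient of $w_{j_0}$ that survives is a nonzero scalar — this is a standard computation with structure constants in an affine Lie algebra, using that the invariant form pairs $\G_\beta$ nondegenerately with $\G_{-\beta}$.

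Concretely, I would induct on the number of real root vectors in $u_{j_0}$ (its ``real length''). If that length is zero, then already $w$ has a component in $\widehat{\MM}(\l,\VV)$ (the component whose $\UU(\G_{-\SR})$-part is trivial), and projecting $w$ onto that component along the PBW decomposition gives a nonzero element of $\widehat\NN$, since the isotypic components for distinct $\UU(\G_{-\SR})$-degrees are linearly independent and $\NN$ is $\H$-stable hence a sum of its weight spaces — wait, more carefully: one projects onto the $\l+k\delta$ weight spaces, which is legitimate since $\NN = \bigoplus_\mu \NN_\mu$ by $\H$-semisimplicity. If the real length is positive, apply the element $x$ as above; the result $x\cdot w$ is again a nonzero element of $\NN$ (nonzero because its projection to $\widehat{\MM}(\l,\VV)$ is $c\cdot w_{j_0}+(\text{other }\VV\text{-terms})$ with $c\neq 0$, together with lower-real-length contributions), and it has strictly smaller real length in its top monomial, so we are done by induction.

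The main obstacle I anticipate is making the ``only $[x,u_{j_0}]$ survives the projection'' step fully rigorous: when $x$ is a product of several root vectors and $u_{j_0}$ a product of several root vectors, the full expansion of $x\cdot u_{j_0}\cdot w_{j_0}$ in $\UU(\G)$ has many cross terms, and one must verify that every term other than the ``completely contracted'' one either has positive $\UU(\G_{-\SR})$-degree (hence lies outside $\widehat{\MM}(\l,\VV)$), has a leftover factor in $\G_{\SR}$ that annihilates $\VV$, or corresponds to a lower-real-length monomial that the induction absorbs. This is where the affine root combinatorics in \eqref{eq:posreal} and the closedness of the partition $\SF$ enter: one needs that a sum of two positive real roots is never an imaginary root plus a positive real root in a way that would spoil the bookkeeping. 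A clean way to package this is to work with the $\dot\QQ_+$-grading from Proposition~\ref{prop-phi-imaginary} together with the $\Z$-grading by $\delta$-degree, reducing the claim to a statement inside the finite-dimensional root datum of $\g$, where the corresponding fact for finite-dimensional simple Lie algebras is classical.
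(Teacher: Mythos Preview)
Your approach has a genuine gap at exactly the point you flag as the main obstacle, and the fix you sketch does not work. Once you replace $w$ by a weight vector (as you must, since $\NN$ is $\H$-stable), \emph{every} $u_i$ has the same $\dot\QQ_+$-component $\beta$; only the $\delta$-degrees $\ell_i$ differ. So there is no ``maximal'' $u_{j_0}$ in the $\dot\D$-direction that is isolated from the others. When you apply your product $x$ of positive real root vectors and project onto $\widehat{\MM}(\l,\VV)$, every term $u_i w_i$ contributes: the completely contracted piece of $x\cdot u_i$ lands in $\UU(\H+\LL)$, and applying it to $w_i$ gives an element of $\VV$. The projection is therefore a sum $\sum_i (\text{element of }\UU(\H+\LL))_i\cdot w_i$, and nothing you have said rules out cancellation. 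The ``standard computation with structure constants'' you invoke would at best show that the coefficient of $w_{j_0}$ is a nonzero scalar; it cannot control the Heisenberg pieces acting on the other $w_j$, since $\VV$ can have infinite-dimensional graded components and you have not used any structural property of $\VV$ beyond its being an $\LL$-module.

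The paper's argument is genuinely different and supplies exactly the missing ingredient. It applies a \emph{single} root vector $x\in\G_{\gamma+m\delta}$ (with $\gamma$ a simple root of $\dot\D$ in the base case, or the minimal $\beta_{1,1}$ occurring among the monomial factors in the inductive step) and exploits the freedom to take $|m|$ arbitrarily large. Then $[x,u_i]$ lies in (or has leading term in) $\G_{(m+\ell_i)\delta}$, and for $|m|$ large this is a ``fresh'' Heisenberg generator $x_{m+\ell_i,j}$ that does not appear in any of the $v_i$. To know that such a generator acts nonzero on $v_i$, the paper invokes the explicit realization of the irreducible diagonal module $\VV\cong \VV_{\KK,\vartheta,a}$ from Theorem~\ref{thm-realiz-locally-finite} and the monomial basis of Corollary~\ref{cor:ybasis}: one can always arrange that $x_{m+\ell_i,j}$ equals one of the $y$-generators, so that it multiplies $v_i$ into a new basis monomial rather than annihilating or lowering it. This is the idea you are missing --- the nonvanishing is secured not by root-system combinatorics in $\g$ but by the $\delta$-direction freedom together with the concrete structure of $\VV$ as a diagonal Heisenberg module.
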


\begin{proof} Let $\Pi = \{\alpha_1, \dots, \alpha_n\}$ be the
base of simple roots for $\dot \Delta$,   and assume $\alpha = \sum_{j=1}^n k_j \alpha_j$
where each $k_j$ is in $\mathbb Z_{\geq 0}$  (or in $\frac{1}{2}\mathbb Z_{\geq 0}$
for any $\alpha_j \in (\dot \Delta_l)_+$ in the $\mathsf{A}_{2\ell}^{(2)}$ case).
Set $\mathsf {ht}(\alpha) = \sum_{j=1}^n k_j$, the \emph{height} of $\alpha$.
The argument will follow the general lines of the proof of
\cite[Lem.~5.4]{F4} and will proceed by induction on the height.
Say $\alpha \leq \beta = \sum_{j=1}^n  \ell_j \alpha_j$ if $\mathsf{ht}(\alpha) < \mathsf{ht}(\beta)$
or if $\mathsf{ht}(\alpha) = \mathsf{ht}(\beta)$ and $k_1= \ell_1, \dots, k_s= \ell_s$, but $k_{s+1} < \ell_{s+1}$.

By Theorem \ref{thm-realiz-locally-finite}, we may assume $\VV \cong \VV_{{\KK}, \vartheta,a}$
for some $\KK \subseteq \mathbb N$ and  some $\vartheta$.
Let $\NN \neq 0$ be a submodule of $\MM(\lambda, \VV)$.   Assume
 $v$ is a homogeneous generator of $\VV$,
 and let $w \in \NN$ be a
nonzero homogeneous element.
Then \begin{equation}\label{eq:welt} w=\sum_{i\in \mathcal I} u_iv_i,\end{equation}
where we may suppose  that the $v_i$ are distinct monomial basis elements of
the form \eqref{eq:basislocfin}  and  the $u_i$ are linearly independent
homogeneous elements of
$\UU(\G_{-\SR})$.
We may suppose that for each $i \in \mathcal I$ there is some $\ell_i \in \mathbb Z$ such that  $u_i \in \UU(\G_{-\SR})_{-\beta+\ell_i\delta}$.

Initially assume  $\mathsf{ht}(\beta)\leq 1$,  so that $\beta$ is a  simple root
in $\dot \D_+$ (or is $\frac{1}{2}\alpha_j$ for some simple root $\alpha_j \in (\dot \D_l)_+$ in the
$\mathsf{A}_{2\ell}^{(2)}$ case).   Suppose $0 \neq x\in
\G_{\beta+m\delta}$ for some $m \in \mathbb Z$.   Then $xv_i=0$ for any $i$
and
\begin{equation}\label{eq:xw} xw=\sum_{i\in  \mathcal I}[x,u_i]v_i.\end{equation}   Here
$[x,u_i]\in \G_{(m+\ell_i)\delta}$ and $[x,u_i]\neq 0$ for all $i$ (which can be seen from the loop realization of $\G$).
Since the $u_i$ are linearly independent,  and $u_i \in \G_{-\beta+\ell_i \delta}$, which is
one-dimensional,  we have that $\ell_i\neq \ell_j$ if $i\neq j$.   Fix $i_{\bullet} \in \mathcal I$.
Now using the notation of Section~\ref{sec-irred-loc-fin-L},  we have that  $[x,u_{i_{\bullet} }]$ is
a linear combination of the basis elements $x_{m+\ell_{i_{\bullet} },j}$.
We may suppose that  $m$ was chosen with $|m|$ sufficiently
large so that $m+\ell_{i_{\bullet} }$ is not equal to $k$ for any $y_{k,j}$ occurring  in any of
the $v_i$,  and  so that at least one of the  $x_{m+\ell_{i_{_\bullet} },j}$ appearing  in $[x,u_{i_{\bullet} }]$
equals $y_{m+\ell_{i_{\bullet} },j}$ in Corollary \ref{cor:ybasis}  (any $x$-term  not equal to a corresponding
$y$-term will annihilate $v_{i_{\bullet}}$).  Then  $[x,u_{i_{\bullet}}]v_{i_{\bullet}} \neq 0$,
and we  have found a nonzero element $xw$  in $\widehat \NN$,  which gives the starting point for
induction on the height.

Suppose now that $\mathsf{ ht}(\beta)>1$.     A basis for $\UU(\G_{-\SR})$ consists of
monomials of the form $z_{\beta_t, n_t}^{p_t} \cdots z_{\beta_2,n_2}^{p_2} z_{\beta_1,n_1}^{p_1}$, where
$0 \neq z_{\beta_j,n_j} \in \mathfrak g_{-\beta_j+n_j \delta}$,   $\beta_1 \leq \beta_2 \leq \dots$, and
if $\beta_i = \beta_{i+1}$, then $n_i < n_{i+1}$.   Thus, we can assume  for $w$  in \eqref{eq:welt}
that

$$w = \sum_{i \in \mathcal I} u_i v_i = \sum_{i \in \mathcal I}  z_{\beta_{i,t(i)}, n_{i, t(i)}}^{p_{i,t(i)}} \cdots z_{\beta_{i,2}, n_{i,2}}^{p_{i,2}}z_{\beta_{i,1},n_{i,1}}^{p_{i,1}}v_i,$$
where for each $i$ we have $u_i = z_{\beta_{i,t(i)}, n_{i, t(i)}}^{p_{i,t(i)}} \cdots z_{\beta_{i,2}, n_{i,2}}^{p_{i,2}}z_{\beta_{i,1},n_{i,1}}^{p_{i,1}}$;  each factor  $z_{\beta_{i,j}, n_{i,j}}$ is basis element for $\mathfrak g_{-\beta_{i,j}+n_{i,j}\delta}$
and  these
basis elements are ordered as in the monomials  above;
$\sum_{j=1}^{t(i)} \beta_{i,j} = \beta$; and $\sum_{j=1}^{t(i)} n_{i,j} = \ell_i$.
We may suppose that we have indexed  the summands
so $\beta_{1,1} \leq \beta_{2,1} \leq \cdots $ and that among the $\beta_{i,j}$  with height equal to $\mathsf{ht}(\beta_{1,1})$,
 $p_{1,1}$ is minimal.

 Now suppose $x\in
\G_{\beta_{1,1}-m\delta}$ is nonzero for some $m \in \mathbb Z$, and observe that $xv_i= 0$ for each $i$ so that $xw$ is as in \eqref{eq:xw}.
Since  $[x,z_{\beta_{1,1}, n_{1,1}}] \in \mathfrak g_{(m+n_{1,1}) \delta}$, it  is
a linear combination of the $x_{m+n_{1,1}, j}$.  We assume that $m$ has been chosen with $|m|$ sufficiently large so that
$m+n_{1,1}$ is distinct from all the $k$ with $y_{k,j}$ occurring in some $v_i$, and so that
at least one of the $x_{m+n_{1,1}, j}$ equals
$y_{m+n_{1,1}, j}$.  Since  $[x, u_1]$ has  $z_{\beta_{1,t(1)}, n_{1, t(1)}}^{p_{1,t(1)}} \cdots z_{\beta_{1,2}, n_{1,2}}^{p_{1,2}}z_{\beta_{1,1},n_{1,1}}^{p_{1,1}-1}[x,z_{\beta_{1,1}, n_{1,1}}]v_i \neq 0$ appearing in it, we will have $0 \neq xw \in \NN$.
Because $\mathsf{ht}(\beta-\beta_{1,1}) < \mathsf{ht}(\beta)$,   we may apply induction to $xw$ to find a
 nonzero element of $\widehat \NN$.     \end{proof}

Lemma \ref{lem-top-gener-loop} immediately implies our main result
about the structure of generalized loop modules.

\begin{theorem}\label{theor-irred-gener-loop}
Let $\l \in \H^*$, $\l(c)=a\neq 0$,  and assume $\VV$ is an  irreducible module in
${\mathcal{K}_{\LL,a}}$.   Then $\MM(\lambda, \VV)$ is an irreducible $\G$-module.
\end{theorem}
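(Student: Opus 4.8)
The plan is to deduce Theorem~\ref{theor-irred-gener-loop} directly from Lemma~\ref{lem-top-gener-loop} together with the irreducibility of $\VV$ as an $\LL$-module. Let $\NN \subseteq \MM(\lambda, \VV)$ be a nonzero submodule; I want to show $\NN = \MM(\lambda, \VV)$. First I would invoke Lemma~\ref{lem-top-gener-loop} to get $\widehat{\NN} = \NN \cap \widehat{\MM}(\lambda, \VV) \neq 0$. Since $\widehat{\MM}(\lambda, \VV)$ is an $\LL$-submodule of $\MM(\lambda, \VV)$ isomorphic to $\VV$ (as noted just before the lemma), and since $\NN$ is in particular stable under $\UU(\LL) \subseteq \UU(\PP)$, the intersection $\widehat{\NN}$ is an $\LL$-submodule of $\widehat{\MM}(\lambda, \VV) \cong \VV$. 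But $\VV$ is irreducible in $\mathcal{K}_{\LL,a}$, so $\widehat{\NN} = \widehat{\MM}(\lambda, \VV)$; that is, $\NN \supseteq \widehat{\MM}(\lambda, \VV)$.

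Next I would show that $\widehat{\MM}(\lambda, \VV)$ generates all of $\MM(\lambda, \VV)$ as a $\G$-module, which finishes the proof since then $\NN \supseteq \UU(\G)\,\widehat{\MM}(\lambda, \VV) = \MM(\lambda, \VV)$. This generation statement follows from the construction of $\MM(\lambda, \VV)$ as the induced module $\UU(\G) \otimes_{\UU(\PP)} \VV$: by the Poincar\'e--Birkhoff--Witt theorem applied to the triangular-type decomposition $\G = \G_{-\SR} \oplus (\H + \LL) \oplus \G_{\SR}$, one has $\MM(\lambda, \VV) \cong \UU(\G_{-\SR}) \otimes \VV$ as a vector space (this is essentially the first bullet of Proposition~\ref{prop-gener-loop}, that $\MM(\lambda, \VV)$ is free over $\UU(\G_{-\SR})$ of rank $1$ — more precisely, free on a basis of $\VV$). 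Identifying $\VV$ with $1 \otimes \VV = \widehat{\MM}(\lambda, \VV)$ inside $\MM(\lambda, \VV)$, every element of $\MM(\lambda, \VV)$ is a sum of elements of the form $u \cdot w$ with $u \in \UU(\G_{-\SR})$ and $w \in \widehat{\MM}(\lambda, \VV)$, so indeed $\UU(\G)\,\widehat{\MM}(\lambda, \VV) = \MM(\lambda, \VV)$.

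Since none of the steps after Lemma~\ref{lem-top-gener-loop} requires any genuinely new idea, the whole content of the theorem sits in that lemma, and the present proof is short. The one point that deserves a sentence of care is the observation that $\widehat{\NN}$ is closed under the action of $\LL$: this is because $\LL \subseteq \PP$ acts on $\MM(\lambda, \VV)$ preserving each weight space $\MM(\lambda, \VV)_{\lambda + k\delta}$ up to shift within the family $\{\lambda + k\delta : k \in \Z\}$ (the elements $x_{k,i} \in \G_{k\delta}$ shift the $\delta$-grading by $k$ but change no other component of the weight), so $\LL$ maps $\widehat{\MM}(\lambda, \VV)$ to itself, and hence maps $\NN \cap \widehat{\MM}(\lambda, \VV)$ to itself as well.

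\begin{proof}
Let $\NN$ be a nonzero submodule of $\MM(\lambda, \VV)$. By Lemma~\ref{lem-top-gener-loop}, $\widehat{\NN} = \NN \cap \widehat{\MM}(\lambda, \VV) \neq 0$. Since $\LL \subseteq \PP$ preserves the subspace $\widehat{\MM}(\lambda, \VV) = \bigoplus_{k \in \Z} \MM(\lambda, \VV)_{\lambda + k\delta}$ (each $\G_{k\delta}$ shifts only the $\delta$-part of a weight), and since $\NN$ is $\UU(\G)$-stable, $\widehat{\NN}$ is an $\LL$-submodule of $\widehat{\MM}(\lambda, \VV) \cong \VV$. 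As $\VV$ is irreducible in $\mathcal{K}_{\LL,a}$, we get $\widehat{\NN} = \widehat{\MM}(\lambda, \VV)$, so $\NN \supseteq \widehat{\MM}(\lambda, \VV)$. By the first assertion of Proposition~\ref{prop-gener-loop}, $\MM(\lambda, \VV)$ is a free $\UU(\G_{-\SR})$-module generated by $\widehat{\MM}(\lambda, \VV)$ (equivalently, $\MM(\lambda, \VV) = \UU(\G_{-\SR})\,\widehat{\MM}(\lambda,\VV)$ by PBW applied to $\G = \G_{-\SR} \oplus (\H + \LL) \oplus \G_{\SR}$). Hence $\NN \supseteq \UU(\G_{-\SR})\,\widehat{\MM}(\lambda, \VV) = \MM(\lambda, \VV)$, and therefore $\MM(\lambda, \VV)$ is irreducible.
\end{proof}
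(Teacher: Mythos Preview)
Your proof is correct and follows exactly the approach the paper intends: the paper simply states that Lemma~\ref{lem-top-gener-loop} ``immediately implies'' the theorem, and you have spelled out precisely those immediate steps (nonzero intersection with $\widehat{\MM}(\l,\VV)$, irreducibility of $\VV$, and generation via $\UU(\G_{-\SR})$).
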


As a consequence of this result, any irreducible module $\VV$ from the category
$\mathcal{K}_{\LL,a}$ with $a\neq 0$ and any $\l\in \H^*$ such
that $\l(c)=a$ will determine an irreducible module $\MM(\lambda,
\VV)$ for the affine Lie algebra $\G$. Let $\VV$ and $\WW$ be
irreducible modules from $\mathcal{K}_{\LL,a}$  with $a \neq 0$, and suppose
$\l, \mu\in \H^*$ with $\l(c)=\mu(c)=a$. Then the modules $\MM(\lambda,
\VV)$ and $\MM(\mu, \WW)$ are isomorphic if and only if $\VV$
and $\WW$ are isomorphic as  $\LL$-modules and $\l=\mu$ (up to a shift of gradation).

\begin{corollary}\label{cor-Verma-irred}
Let $\l\in \H^*$, $\l(c)\neq 0$, $\phi:\mathbb{N}  \rightarrow \{\pm\}$ any function, and let  $\tilde{\phi}:\JJ\rightarrow
\{\pm\} $ be as in Section~\ref{sec-tilde-phi-imag}. Then the $\phi$-imaginary Verma module
 $\MM_{\phi}(\l)$ and the
$\tilde{\phi}$-imaginary Verma module
 $\MM_{\tilde{\phi}}(\l)$ are irreducible.
\end{corollary}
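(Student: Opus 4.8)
The plan is to deduce this from Theorem~\ref{theor-irred-gener-loop} by exhibiting each imaginary Verma module as a generalized loop module $\MM(\l,\VV)$ for an appropriate irreducible $\VV\in\mathcal{K}_{\LL,a}$. Write $a=\l(c)\neq 0$; since the choice $\tilde{\phi}(n,i)=\phi(n)$ recovers the $\phi$ case, it is enough to treat an arbitrary $\tilde{\phi}\colon\JJ\to\{\pm\}$, and I would take $\VV=\MM_{\tilde{\phi}}(a)$, the $\tilde{\phi}$-Verma module over the Heisenberg subalgebra $\LL$. The first task is to check that $\VV$ is an irreducible object of $\mathcal{K}_{\LL,a}$: it is $\Z$-graded with $c$ acting by $a$ (the argument of Proposition~\ref{prop:phi-graded} applies verbatim); it is free of rank one over the abelian algebra $\UU(\LL_{\tilde{\phi}}^{-})$, so by Lemma~\ref{lem-gradation} the commuting operators $x_{k,j}x_{-k,j}$ are simultaneously diagonalizable on its monomial basis, and hence $\VV$ is diagonal; and its irreducibility for $a\neq 0$ is the statement recorded in Section~\ref{sec-tilde-phi-imag}, proved exactly as Proposition~\ref{prop-irred-phi-Verma}.

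Next I would extend $\VV$ to a $\PP$-module in the standard way used to form $\MM(\l,\VV)$ after \eqref{eq:genloop} — with $\G_{\SR}$ acting by $0$ and $\H$ acting on $\VV_k$ through $\l+k\delta$ — and identify $\MM(\l,\VV)=\UU(\G)\otimes_{\UU(\PP)}\VV$ with $\MM_{\tilde{\phi}}(\l)$. This is transitivity of induction: $\Bo_{\tilde{\phi}}=\H\oplus\G_{\SF_{\tilde{\phi}}}\subseteq\PP$, there is a vector-space decomposition $\PP=\Bo_{\tilde{\phi}}\oplus\LL_{\tilde{\phi}}^{-}$, and under it $\VV\cong\UU(\PP)\otimes_{\UU(\Bo_{\tilde{\phi}})}\C v_{\l}$ as $\PP$-modules (the $\H$-weights agree because a $\delta$-degree $j$ element of $\UU(\LL_{\tilde{\phi}}^{-})$ carries the weight $\l+j\delta$); therefore $\MM(\l,\VV)\cong\UU(\G)\otimes_{\UU(\Bo_{\tilde{\phi}})}\C v_{\l}=\MM_{\tilde{\phi}}(\l)$. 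For $\VV=\MM_{\phi}(a)$ this is exactly the isomorphism $\MM(\l,\VV)\cong\MM_{\phi}(\l)$ already observed after \eqref{eq:genloop}. Applying Theorem~\ref{theor-irred-gener-loop} to the irreducible module $\VV\in\mathcal{K}_{\LL,a}$ then gives that $\MM_{\phi}(\l)$ and $\MM_{\tilde{\phi}}(\l)$ are irreducible $\G$-modules, as claimed.

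All the genuine content sits in Theorem~\ref{theor-irred-gener-loop} and its Lemma~\ref{lem-top-gener-loop}; once those are available the corollary is essentially bookkeeping. The only step needing a little care is confirming that $\MM_{\tilde{\phi}}(a)$ in fact lies in $\mathcal{K}_{\LL,a}$ and not merely in $\mathcal{WZK}_{\LL,a}$, i.e.\ the simultaneous diagonalizability of all the $x_{k,j}x_{-k,j}$ — but this is immediate from Lemma~\ref{lem-gradation}, since $\LL_{\tilde{\phi}}^{-}$ is abelian and $\VV$ is its free rank-one module on the generator $v$.
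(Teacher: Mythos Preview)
Your proposal is correct and follows exactly the route the paper intends: the corollary is left without an explicit proof because it is immediate from Theorem~\ref{theor-irred-gener-loop} once one recognizes $\MM_{\phi}(\l)$ (resp.\ $\MM_{\tilde\phi}(\l)$) as the generalized loop module $\MM(\l,\VV)$ with $\VV=\MM_{\phi}(a)$ (resp.\ $\MM_{\tilde\phi}(a)$), and the paper has already recorded that these $\VV$ are irreducible objects of $\mathcal{K}_{\LL,a}$ and that $\MM(\l,\MM_{\phi}(a))\cong\MM_{\phi}(\l)$ right after \eqref{eq:genloop}. Your write-up simply spells out the transitivity-of-induction identification and the diagonality check that the paper takes for granted.
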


\subsection{Partial generalized loop modules} \quad

Now we consider particular examples of generalized loop modules.
Assume $\II\subset \mathbb N$  and let $\phi:\mathbb N \setminus
\II \rightarrow \{\pm\}$ be  any function.   Set  $\KS_{\II}
=\mathbb C c \oplus \left(\bigoplus_{k\in \II} \G_{\pm
k\delta}\right)$ and let  $$\KS_\phi^\pm=\Bigg(\bigoplus_{n\in
\mathbb N\setminus {\II}, \, \phi(n)=\pm }
\G_{n\delta}\Bigg)\oplus \Bigg(\bigoplus_{n\in \mathbb N\setminus
{\II}, \, \phi(n)=\mp } \G_{-n\delta}\Bigg).$$ Then
$\KS:=\KS_{\II}\oplus \KS_\phi^+$ is a parabolic subalgebra of
$\LL$.      Let $\NN$ be an irreducible diagonal $\mathbb
Z$-graded module over the Heisenberg Lie algebra $\KS_{\II}$ with
nonzero level $a$.   Extend the action to a module structure over
$\KS$ by setting $\KS_\phi^+ \mathsf{N}=0$.   With these
ingredients, we construct  an induced diagonal $\LL$-module
$$\VV: = \VV_{\II, \phi}(\NN) =\UU(\LL)\otimes_{\UU(\KS)}\NN.$$    Then $\VV$ is the tensor product of the vector space $\NN$
with the Verma module over the Heisenberg Lie algebra $\KS_\phi := \KS_{\phi}^- \oplus \mathbb C c \oplus \KS_\phi^+$.
Standard arguments (compare Props. \ref{prop:phi-graded} and \ref{prop-irred-phi-Verma})  show

\begin{lemma} The $\LL$-module $\VV = \VV_{\II, \phi}(\NN)$
is $\mathbb Z$-graded and  irreducible when $a \neq 0$.
\end{lemma}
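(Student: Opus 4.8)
The plan is to prove the two assertions — that $\VV = \VV_{\II,\phi}(\NN)$ is $\mathbb Z$-graded and that it is irreducible when $a\neq 0$ — by combining the $\Z$-grading mechanics already used for $\MM_\phi(a)$ with the tensor-product decomposition noted just before the lemma. First I would record the grading: since $\UU(\LL)$ is $\Z$-graded with $\UU(\LL)_n$ spanned by products of root vectors in $\G_{m\delta}$ whose degrees sum to $n$, and since $\KS$ and $\KS_\phi^+$ are graded subalgebras, the module $\NN$ (being itself $\Z$-graded over $\KS_\II$) induces up to a $\Z$-graded $\LL$-module exactly as in Proposition~\ref{prop:phi-graded}; one sets $\VV_n = \UU(\KS_\phi^-)_n\cdot\NN$ after using PBW to write $\UU(\LL) = \UU(\KS_\phi^-)\otimes\UU(\KS)$, and checks $\G_{k\delta}\VV_n\subseteq\VV_{n+k}$ directly. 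This part is routine and I would dispatch it in a sentence or two.

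For irreducibility, the key observation is the tensor-product description: as a vector space (and as a module over $\KS_\phi := \KS_\phi^-\oplus\C c\oplus\KS_\phi^+$ together with $\KS_\II$ acting on the $\NN$ factor), $\VV \cong \MM^{\KS_\phi}_{\mathrm{Verma}}(a)\otimes \NN$, where $\MM^{\KS_\phi}_{\mathrm{Verma}}(a)$ is the Verma module over the Heisenberg algebra $\KS_\phi$. I would pick a nonzero $w\in\VV$ and argue in two stages. Stage one: using the dual PBW bases $\{x_i\}$ of $\KS_\phi^+$ and $\{y_i\}$ of $\KS_\phi^-$ with $[x_i,y_j]=\delta_{ij}c$, write $w$ as a finite sum $\sum_{\bar k} y(\bar k)\, n_{\bar k}$ with $n_{\bar k}\in\NN$, take the largest multi-index $\bar m$ with $n_{\bar m}\neq 0$, and apply $x(\bar m)$. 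Because the elements of $\KS_\phi^+$ commute with $\KS_\II$ and act on $\NN$ by zero, exactly the computation from the proof of Proposition~\ref{prop-irred-phi-Verma} gives $x(\bar m)w = \big(\prod_i m_i!\big)\,a^{\sum_i m_i}\, n_{\bar m}$, a nonzero element of $\NN\subseteq\VV$ since $a\neq 0$. Thus the submodule generated by $w$ meets $\NN$ nontrivially. Stage two: $\NN$ is irreducible over $\KS_\II$, and $\KS_\II\subseteq\UU(\LL)$, so the submodule generated by $w$ contains all of $\NN$; applying $\UU(\KS_\phi^-)$ and invoking freeness of $\VV$ over $\UU(\KS_\phi^-)$ (from PBW), it contains all of $\VV$. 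Hence $\VV$ is irreducible.

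The main obstacle — really the only subtle point — is justifying that $x(\bar m)$ kills every lower term in the expansion of $w$ while extracting precisely the leading coefficient times a scalar times $n_{\bar m}$; this requires care with the ordering on multi-indices (the lexicographic order used in Proposition~\ref{prop-irred-phi-Verma}) and the fact that $x_i$ acts on $\NN$ trivially, so that all commutator contributions land back in $\NN$ with controlled multi-degree. Since $\KS_\phi^\pm$ are abelian and $[\KS_\phi^\pm,\KS_\II]=0$, every bracket $[x_i, y(\bar k)\, n_{\bar k}]$ lowers the $\bar k$-degree by $\zeta_i$ and leaves the $\NN$-factor untouched, so the argument of Proposition~\ref{prop-irred-phi-Verma} transfers essentially verbatim; this is why the lemma can be stated as following from ``standard arguments.'' I would therefore present the proof compactly, citing Propositions~\ref{prop:phi-graded} and \ref{prop-irred-phi-Verma} for the two halves and spelling out only the interface between the Verma-type leading-term argument and the irreducibility of $\NN$ over $\KS_\II$.
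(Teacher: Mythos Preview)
Your proposal is correct and is precisely the expansion the paper intends: the paper gives no detailed proof, stating only that ``standard arguments (compare Props.~\ref{prop:phi-graded} and \ref{prop-irred-phi-Verma}) show'' the lemma, and your two-stage argument --- PBW grading for the $\Z$-graded structure, then the leading-term extraction via $x(\bar m)$ to land in $\NN$, followed by irreducibility of $\NN$ over $\KS_\II$ --- is exactly what those propositions supply. One small slip: your formula $\VV_n = \UU(\KS_\phi^-)_n\cdot\NN$ should account for the grading on $\NN$ itself, i.e.\ $\VV_n = \sum_{p+q=n}\UU(\KS_\phi^-)_p\cdot\NN_q$, but this does not affect the argument.
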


Let $\l\in \H^*$ be such that $\l(c)=a$, and suppose that $\MM(\lambda, \VV)$ is  the generalized
loop $\G$-module associated with $\lambda$ and $\VV = \VV_{\II, \phi}(\NN)$.      Alternately,
 we can construct an induced module directly from $\NN$ by
first making $\NN$ into a module for $\KS \oplus \G_{\SR}$  by having $\KS_\phi^+ \oplus \G_{\SR}$
be in the annihilator subalgebra of $\NN$.    Then we can induce to a module
$\UU(\G) \otimes_{\UU(\KS \oplus \G_{\SR})} \NN$  for $\G$.

\begin{corollary}  The $\G$-module $\UU(\G) \otimes_{\UU(\KS \oplus \G_{\SR})} \NN$
is isomorphic to  $\MM(\lambda, \VV)$ for $\VV = \VV_{\II,\phi}(\NN)$,
and hence it is irreducible.

\end{corollary}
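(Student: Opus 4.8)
The plan is to recognize both modules as induced modules built from $\NN$ and then apply transitivity of induction. Write $\QQ$ for the subalgebra $\H + \KS + \G_{\SR}$ of $\G$; this is the subalgebra over which $\NN$ is induced in the statement, where $\H$ acts on $\NN$ by the rule used in Section~5 to extend $\VV$ to a $\PP$-module, namely $hw = (\lambda + k\delta)(h)w$ for $w$ in the degree-$k$ component of $\NN$, while $\KS_\phi^{+}\oplus\G_{\SR}$ annihilates $\NN$ and $\KS_{\II}$ acts as given with $c$ acting by $a$. One first checks that $\QQ$ is indeed a subalgebra and that this really defines a $\QQ$-module: the only brackets to verify are $[\KS_{\II},\KS_\phi^{+}]=0$, $[\KS_\phi^{+},\KS_\phi^{+}]=0$, $[\G_{\SR},\G_{\SR}]\subseteq\G_{\SR}$, $[\LL,\G_{\SR}]\subseteq\G_{\SR}$ and the action of $\H$ by scalars. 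Note $\QQ\subseteq\PP=(\H+\LL)\oplus\G_{\SR}$ since $\KS\subseteq\LL$.

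Next I would produce the isomorphism by constructing mutually inverse $\G$-module maps via universal properties. In one direction, the inclusion $\NN\hookrightarrow\VV=\VV_{\II,\phi}(\NN)=\UU(\LL)\otimes_{\UU(\KS)}\NN$, $n\mapsto 1\otimes n$, is a $\KS$-module map, and composing it with $\VV\hookrightarrow\MM(\lambda,\VV)$, $v\mapsto 1\otimes v$, gives a $\QQ$-module map $\iota\colon\NN\to\MM(\lambda,\VV)$ (here $\KS$ acts through $\UU(\LL)$, $\G_{\SR}$ kills $\VV$ hence its image, and the $\H$-action matches by construction). By the universal property of $\UU(\G)\otimes_{\UU(\QQ)}\NN$, the map $\iota$ extends to a $\G$-module homomorphism $\Phi\colon\UU(\G)\otimes_{\UU(\QQ)}\NN\to\MM(\lambda,\VV)$. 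In the other direction, $n\mapsto 1\otimes n$ is a $\KS$-module map $\NN\to\UU(\G)\otimes_{\UU(\QQ)}\NN$, so by the universal property of $\VV=\UU(\LL)\otimes_{\UU(\KS)}\NN$ it extends to an $\LL$-module map $\psi\colon\VV\to\UU(\G)\otimes_{\UU(\QQ)}\NN$, with image $\UU(\LL)\cdot(1\otimes\NN)$. The crucial point is that $\G_{\SR}$ annihilates this image: for $x\in\G_{\SR}$ one has $x\cdot(1\otimes n)=1\otimes(xn)=0$ since $\G_{\SR}\subseteq\QQ$ kills $\NN$, and for $u\in\UU(\LL)$ the identity $x\,u\,(1\otimes n)=[x,u]\,(1\otimes n)+u\,x\,(1\otimes n)$ together with $[\G_{\SR},\LL]\subseteq\G_{\SR}$ (which forces $[x,u]\in\UU(\G)\G_{\SR}$) gives $x\,u\,(1\otimes n)=0$. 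Hence $\psi$ is in fact a $\PP$-module map, and by the universal property of $\MM(\lambda,\VV)=\UU(\G)\otimes_{\UU(\PP)}\VV$ it extends to a $\G$-module homomorphism $\Psi\colon\MM(\lambda,\VV)\to\UU(\G)\otimes_{\UU(\QQ)}\NN$.

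Finally I would check $\Phi$ and $\Psi$ are mutually inverse by evaluating on cyclic generators: $\UU(\G)\otimes_{\UU(\QQ)}\NN$ is generated over $\UU(\G)$ by $1\otimes\NN$, and $\MM(\lambda,\VV)$ is generated over $\UU(\G)$ by $1\otimes(1\otimes\NN)$; by construction $\Phi(1\otimes n)=1\otimes(1\otimes n)$ and $\Psi(1\otimes(1\otimes n))=1\otimes n$, so both composites are $\G$-endomorphisms fixing a generating set and hence the identity. This gives the asserted isomorphism $\UU(\G)\otimes_{\UU(\KS\oplus\G_{\SR})}\NN\cong\MM(\lambda,\VV)$ with $\VV=\VV_{\II,\phi}(\NN)$. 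Irreducibility is then immediate: by the Lemma above, $\VV_{\II,\phi}(\NN)$ is an irreducible (diagonal) module in $\mathcal{K}_{\LL,a}$ with $a\neq 0$, so Theorem~\ref{theor-irred-gener-loop} shows $\MM(\lambda,\VV)$ is irreducible. The main obstacle here is not conceptual but bookkeeping: making sure the three induction steps fit together, in particular verifying that $\G_{\SR}$ acts as zero on $\UU(\LL)\cdot(1\otimes\NN)$ using $[\G_{\SR},\LL]\subseteq\G_{\SR}$, and that the $\H$-action with its $\delta$-shift across the $\Z$-grading is carried correctly through each step.
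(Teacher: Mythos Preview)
Your proposal is correct and follows the same approach as the paper: the paper's proof is the one-line observation that $\VV=\UU(\LL)\NN$ together with Theorem~\ref{theor-irred-gener-loop}, which is precisely transitivity of induction followed by the irreducibility theorem. You have simply carried out in detail the transitivity argument (via universal properties and the key bracket relation $[\G_{\SR},\LL]\subseteq\G_{\SR}$) that the paper leaves implicit.
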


\begin{proof}
It is sufficient to note that $\VV=\UU(\LL)\NN$ and to apply
Theorem~\ref{theor-irred-gener-loop}.
\end{proof}

\section{Acknowledgment}
 Part of
this work was done while V. Futorny and I. Kashuba visited the
University of Murcia. The support of the Seneca Foundation and the
hospitality of the University of Murcia are greatly appreciated.
This paper was discussed when G. Benkart, V. Futorny,  and I.
Kashuba  visited the Mathematical Sciences Research Institute
(MSRI) in Spring 2008.  They acknowledge with gratitude the
hospitality  and stimulating research environment of MSRI.    G.
Benkart is grateful to the Simons Foundation for the support she
received as  a Simons Visiting Professor at MSRI. V. Futorny  was
supported in part by the CNPq grant (301743/2007-0) and by the
Fapesp grant (2010/50347-9). The authors are grateful to the
referee for many useful suggestions which led to the improvement
of the paper.

\end{document}